\definecolor{darkblue}{rgb}{0.0,0,0.7}
\newcommand{\darkblue}{\color{darkblue}}
\definecolor{darkred}{rgb}{0.68,0,0}
\definecolor{darkgreen}{rgb}{0,.38,0}
\newcommand{\defn}[1]{\emph{\darkblue #1}}
\newlength{\ml}
\newtheorem{thm}{Theorem}[section]
\newtheorem{claim}[thm]{Claim}
\newtheorem{Def}[thm]{Definition}
\newtheorem{lemma}[thm]{Lemma}
\newtheorem{Cor}[thm]{Corollary}
\newtheorem{prop}[thm]{Proposition}
\newtheorem{conj}[thm]{Conjecture}
\newtheorem{ex}[thm]{Example}
\theoremstyle{plain}
\theoremstyle{definition}
\newtheorem{rem}[thm]{Remark}
\numberwithin{equation}{section}
\newcommand{\la}{\lambda}
\newcommand{\nn}{\mathbb N}
\newcommand{\ga}{\gamma}
\newcommand{\al}{\alpha}
\newcommand{\be}{\beta}
\newcommand{\ssu}{\subset}
\def\.{\hskip.06cm}
\def\ts{\hskip.03cm}
\def\SSYT{ {\text {\rm SSYT}  } }
\def\P{{{\rm{\textsf{P}} }}}
\def\NP{{{\rm{\textsf{NP}} }}}
\def\coNP{{{\rm{\textsf{coNP}} }}}
\def\bba{{\text{\bf a}}}
\def\bbc{{\text{\bf c}}}
\def\bbv{{\text{\bf v}}}
\def\<{\langle}
\def\>{\rangle}
\def\det{\mathrm{det}}
\def\la{\lambda}
\begin{document}

\title{The Newton polytope of the Kronecker product }

\author{Greta Panova}

\address{Department of Mathematics, University of Southern California, 
Los Angeles, CA 90089}

\email{gpanova@usc.edu}
\urladdr{https://sites.google.com/usc.edu/gpanova/}

\author{Chenchen Zhao}
\email{zhao109@usc.edu}
\urladdr{https://sites.google.com/view/chenchen-zhao/}

\subjclass{Primary 05E10, 20C30; Secondary 05E05, 20C15, 68Q15}
\date{\today }

\keywords{Kronecker coefficients, saturated Newton polytope, symmetric group representations}

\begin{abstract}
We study the Kronecker product of two Schur functions $s_\lambda\ast s_\mu$, defined as the image of the characteristic map of the product of two $S_n$ irreducible characters. We prove special cases of a conjecture of Monical--Tokcan--Yong that its monomial expansion has a saturated Newton polytope. Our proofs employ the Horn inequalities for positivity of Littlewood-Richardson coefficients and imply necessary conditions for the positivity of Kronecker coefficients. 
\end{abstract}

\maketitle

\section{Introduction}

The Kronecker coefficients $g(\la,\mu,\nu)$ of the symmetric group present an 85 year old mystery  in algebraic combinatorics and representation theory.  They are  defined as the multiplicities of an irreducible $S_n$-module $\mathbb{S}_\nu$ in the tensor product of two other irreducibles: $\mathbb{S}_\la \otimes \mathbb{S}_\mu$.  Originally introduced by Murnaghan in 1938~\cite{Mur38,Mur56}, the question of their efficient computation has been reiterated many times since the 1980s, see for example works of~\cite{Las79,GR85,remm:89}. Stanley's 10th open problem in algebraic combinatorics~\cite{Sta00} is to find a manifestly positive combinatorial interpretation for the Kronecker coefficients. Yet, over the years, very little progress on this problem has been made and only for special cases. We refer to~\cite{Pan22} for an overview of these results, as well as other open problems related to the Kronecker coefficients which justify the ``mystery'' label. Their importance has been reinforced by their role in Geometric Complexity Theory, a program aimed at establishing computational lower bounds and ultimately separating complexity classes like $\P$ vs $\NP$, see~\cite{Pan23} and references therein. While no positive combinatorial formula exists, we also lack understanding for when such coefficients would be positive. Recent work, see~\S\ref{ss:complexity}, has cast doubt on the possibility of answering these questions in a ``nice'' way. 

In a different direction, \cite{Yong} initiated the study of the Newton polytopes of important polynomials in algebraic combinatorics. It has been established that some of the main polynomials of interest, such as the Schur and Schubert polynomials, have \emph{saturated Newton polytope} (SNP). See \S\ref{ss:snp_lit} for more details and background.

\begin{Def}\label{def:snp} A multivariate polynomial with nonnegative coefficients $f(x_1,\ldots,x_k)=\sum_{\al} c_\al x^\al$ has a saturated Newton polytope (SNP) if the set of points $M_k(f):=\{(\al_1,\cdots,\al_k): c_\al>0\}$ coincides with its convex hull in $\mathbb{Z}^k$. 
\end{Def}

Given a symmetric function $f$, let $f(x_1,\dots, x_k)$ denote the specialization of $f$ to the variables $x_1,\dots,x_k$ that sets $x_m = 0 $ for all $ m \ge k+1.$
\begin{Def}\label{def:snp_sym_fun} A symmetric function $f$ has a saturated Newton polytope (SNP) if $f(x_1,\dots, x_k)$ has a SNP for all $k \ge 1.$\end{Def}

\subsection{SNP for the Kronecker product} 
The Kronecker coefficients of $S_n$, denoted by $g(\la,\mu,\nu)$, give the multiplicities of one Specht module in the tensor product of the other two, namely
$$\mathbb{S}_\la \otimes \mathbb{S}_\mu = \oplus_{\nu \vdash n} \mathbb{S}_\nu^{\oplus g(\la,\mu,\nu)}.$$ The Kronecker (tensor) product $\ast$ of symmetric functions is defined on the Schur basis as 
$$s_\la \ast s_\mu := \sum_\nu g(\la,\mu,\nu) s_\nu,$$
and extended by linearity. It is equivalent to the inner product of $S_n$ characters under the characteristic map. Via Schur-Weyl duality the Kronecker coefficients can be interpreted in the context of the representation theory of the general linear group, namely $g(\la,\mu,\nu)$ is the dimension of the projection of $V_\la( A\otimes B)$ to $V_\mu(A) \times V_\nu(B)$, where $V_{\la},V_{\mu},V_{\nu}$ are irreducible Weyl modules and $A$, $B$ are vector spaces. This viewpoint is important in the applications to Quantum Information Theory, see e.g.~\cite{christandl2006spectra, CHM}, and motivates the consideration of partitions of fixed length 2 or 3.  

\begin{conj}[\cite{Yong}]\label{conj:snp}
The Kronecker product $s_\la*s_\mu = \sum_\nu g(\la,\mu,\nu)s_\nu$ has a saturated Newton polytope. 
\end{conj}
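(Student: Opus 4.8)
The plan is to recast Conjecture~\ref{conj:snp} as a combinatorial statement about the support $D:=\{\nu\vdash n:\ g(\la,\mu,\nu)>0\}$, and then to prove it in the cases within reach --- for instance when $\la,\mu$ both have at most two rows, and for some hook or near-rectangular shapes --- by feeding the Horn inequalities for Littlewood--Richardson positivity into known expansions of $s_\la\ast s_\mu$. Fix the number of variables $k$. Since $s_\la\ast s_\mu=\sum_\nu g(\la,\mu,\nu)s_\nu$ has nonnegative coefficients there is no cancellation, so $x^\be$ occurs in $(s_\la\ast s_\mu)(x_1,\dots,x_k)$ precisely when it occurs in some $s_\nu(x_1,\dots,x_k)$ with $\nu\in D$. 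By Rado's theorem the Newton polytope of $s_\nu(x_1,\dots,x_k)$ is the permutohedron $P_\nu^{(k)}:=\mathrm{conv}(S_k\cdot\nu)$, whose lattice points are exactly the exponents occurring in $s_\nu$. Writing $\Pi_k:=\mathrm{Newton}\bigl((s_\la\ast s_\mu)(x_1,\dots,x_k)\bigr)$, we get
\[
M_k(s_\la\ast s_\mu)=\!\!\bigcup_{\nu\in D,\ \ell(\nu)\le k}\!\!\bigl(P_\nu^{(k)}\cap\Z^k\bigr),\qquad
\Pi_k=\mathrm{conv}\Bigl(\ \bigcup_{\nu\in D,\ \ell(\nu)\le k}\!\! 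P_\nu^{(k)}\Bigr).
\]
Because $P_\nu^{(k)}\subseteq P_{\nu'}^{(k)}$ whenever $\nu\trianglelefteq\nu'$, only the dominance-maximal elements of $D$ contribute to the union; and since $\nu\trianglerighteq\be$ with $\ell(\be)\le k$ already forces $\ell(\nu)\le k$, SNP for $s_\la\ast s_\mu$ is equivalent to the following assertion: \emph{for every $k$, every partition $\be\in\Pi_k$ is dominated by some $\nu$ with $g(\la,\mu,\nu)>0$.}

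The second step is to pin down the dominance-maximal elements of $D$, hence the facets of $\Pi_k$. For two-row $\la,\mu$ I would expand $s_\la\ast s_\mu$ via the Kronecker comultiplication identity $(h_ah_b)\ast s_\la=\sum_{\pi\vdash a,\ \rho\vdash b}c^\la_{\pi\rho}\,s_\pi s_\rho$ together with the Jacobi--Trudi expression $s_{(n-b,b)}=h_{n-b}h_b-h_{n-b+1}h_{b-1}$, which writes each $g(\la,\mu,\nu)$ as an alternating sum of Littlewood--Richardson coefficients $c^\nu_{\pi\rho}$; for general $\la,\mu$ the extremal $\nu$ (those maximizing a prefix sum $\nu_1+\dots+\nu_j$ subject to $g(\la,\mu,\nu)>0$) are governed by Dvir-type identities expressing such boundary coefficients through Littlewood--Richardson coefficients of skew shapes $\la/\de,\mu/\de$. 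Either way one extracts a Horn-type system of linear inequalities on $\nu$ that is \emph{necessary} for $g(\la,\mu,\nu)>0$ --- the ``necessary conditions'' of the abstract --- and that describes $\Pi_k$.

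The heart of the proof is the converse: every lattice point $\be\in\Pi_k$ must be dominated by a genuine element of $D$. I would obtain this by reducing Kronecker positivity to Littlewood--Richardson positivity. The expansions of the previous step give, after cancelling the negative terms against part of the positive ones, lower bounds for $g(\la,\mu,\nu)$ in terms of the coefficients $c^\nu_{\pi\rho}$; the Horn--Klyachko criterion --- $c^\nu_{\pi\rho}>0$ iff $|\nu|=|\pi|+|\rho|$ and all Horn inequalities hold --- then certifies that these Littlewood--Richardson coefficients are positive exactly when $\be$ satisfies the Horn inequalities defining $\Pi_k$. Hence every lattice point of $\Pi_k$ carries such a positivity certificate (if not for $\be$ itself then for some $\nu\trianglerighteq\be$), which gives the reverse inclusion and SNP for all $k$.

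The main obstacle is precisely what separates Kronecker from Littlewood--Richardson: the support $D$ has ``holes,'' so --- unlike in the saturated (Knutson--Tao) Littlewood--Richardson setting --- positivity of $g$ is not a purely polyhedral condition and may fail at interior lattice points of the relevant moment polytope. The Horn-inequality route circumvents this only when (i)~the reduction of Kronecker positivity to Littlewood--Richardson positivity is genuinely available, and (ii)~the Horn inequalities it produces match the facets of $\Pi_k$. Both hold when $\la,\mu$ have at most two rows (and in a few further families), but for longer shapes the reduction loses information and the two families of inequalities cease to coincide, leaving the general conjecture open.
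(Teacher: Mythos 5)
You have not proved the statement, and indeed you could not be expected to: the statement is a \emph{conjecture} of Monical--Tokcan--Yong which the paper does not prove either --- it only establishes the special cases in Theorems~\ref{thm:2row} and~\ref{thm:main} and the limiting version Theorem~\ref{thm:limit}, and in \S\ref{ss:doubts} the authors explicitly doubt that the conjecture holds in full generality. Your own last paragraph concedes the same, so what you have written is a programme for the known special cases rather than a proof of the conjecture; a review must therefore record a genuine gap, namely that the general claim is never established (and may be false).

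Even restricted to the special cases, the central step of your outline is not carried out. Your reformulation via permutohedra and dominance-maximal elements of the support is correct and matches the paper's Lemma~\ref{lem:unique_dom}, and for two-row $\times$ two-row (and the case $\mu_1\le\nu_1$) the paper indeed finishes by exhibiting a unique dominant term, using Rosas's explicit formulas rather than your Jacobi--Trudi alternating sum. But for the genuinely new case (Theorem~\ref{thm:main}) your ``heart of the proof'' --- cancelling the negative terms in $(h_ah_b)\ast s_\la$ to get lower bounds on $g(\la,\mu,\nu)$, and asserting that the resulting Horn inequalities ``match the facets of $\Pi_k$'' --- is exactly the unresolved difficulty: the cancellation is not controlled, no positivity lower bound follows, and since Kronecker coefficients fail saturation (\S\ref{ss:kron_sat}) a polyhedral description of the hull cannot by itself certify lattice points. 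The paper's actual mechanism is different: it expands in the \emph{monomial} basis, so that the coefficient of $x^{\mathbf a}$ is a sum of products of multi-LR coefficients (Section~\ref{s:LRs}, Proposition~\ref{prop:kron_monomials}); Horn inequalities then describe a polytope $\mathcal{P}(\mu,\nu,\mathbf a)$ in the auxiliary variables $\alpha^1,\alpha^2,\alpha^3$, and the technical core is Theorem~\ref{thm:int_point}, a case analysis of half-integral vertices showing that this polytope has an integer point whenever it is nonempty. Nothing in your proposal supplies this integrality argument, which is precisely what replaces the missing ``reduction of Kronecker positivity to LR positivity'' that you acknowledge is unavailable for longer shapes.
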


Here we prove this conjecture for partitions of lengths 2 and 3 and various truncations.
\begin{thm}\label{thm:2row}
Let $\la,\mu \vdash n$ with $\ell(\la)\leq 2, \ell(\mu)\leq 3$, and $\mu_1 \geq \la_1$ then
$s_\la*s_\mu(x_1,\ldots,x_k)$ has a saturated Newton polytope for every $k\in \mathbb{N}$.
\end{thm}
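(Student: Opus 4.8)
The goal is to show that for $\la,\mu\vdash n$ with $\ell(\la)\le 2$, $\ell(\mu)\le 3$, $\mu_1\ge\la_1$, the support of $s_\la*s_\mu(x_1,\dots,x_k)$ is the set of lattice points of its Newton polytope. Since $s_\nu(x_1,\dots,x_k)$ itself has SNP (its support is the set of lattice points in the permutohedron of $\nu$ truncated to $k$ variables), the Newton polytope of $s_\la*s_\mu(x_1,\dots,x_k)$ is the convex hull of $\bigcup_{\nu:\, g(\la,\mu,\nu)>0} P_k(\nu)$, where $P_k(\nu)$ is that truncated permutohedron. The natural strategy is therefore: (i) identify the set $K(\la,\mu):=\{\nu : g(\la,\mu,\nu)>0\}$ explicitly when $\ell(\la)\le 2$; (ii) show this set is ``saturated'' in the dominance order in a suitable sense — concretely, that the $\nu$'s appearing are exactly the partitions of $n$ in an interval $[\nu^{\min},\nu^{\max}]$ (or a union of a few such intervals) in dominance order, with $\ell(\nu)$ bounded; (iii) deduce that the union of the corresponding permutohedra is already convex, so no new lattice points are created by taking the convex hull, and conversely that every lattice point of the hull lies in some $P_k(\nu)$ with $g(\la,\mu,\nu)>0$.

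**Reducing to Littlewood–Richardson positivity.** The key structural input, and the reason the hypothesis $\ell(\la)\le 2$ matters, is the classical reduction of two-row Kronecker coefficients to Littlewood–Richardson coefficients: for $\la=(n-a,a)$ one has $g(\la,\mu,\nu)>0$ if and only if there is a way to write, via the Littlewood–Richardson rule, suitable skew shapes built from $\mu$ and $\nu$ — more precisely, $g((n-a,a),\mu,\nu)$ can be expressed as an alternating sum of $c^{\mu}_{\cdot\,\cdot}$-type coefficients (work of Ballantine–Orellana, Blasiak, and others), and its positivity is governed by the Horn inequalities that the excerpt tells us to use. So the first real step is to invoke the Horn-inequality description of when the relevant Littlewood–Richardson coefficients are positive and translate it into a system of linear inequalities on $\nu$ (given $\la,\mu$). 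Because $\ell(\la)\le 2$ and $\ell(\mu)\le 3$, this system has few enough inequalities that the feasible region is an explicit polytope $Q(\la,\mu)\subseteq\R^k$ cut out by the $|\nu|=n$ hyperplane, the inequalities $\nu_1\ge\nu_2\ge\cdots\ge 0$, the length bound $\ell(\nu)\le\ell(\la)\ell(\mu)\le 6$, and the Horn/dominance constraints coming from $\mu$ and $\la$; the condition $\mu_1\ge\la_1$ should be what makes the description uniform (it pins down which of $\mu,\nu$ plays the ``dominant'' role).

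**Assembling the polytope.** With $K(\la,\mu)$ identified as the lattice points of an explicit polytope $Q(\la,\mu)$, I would prove SNP by a direct geometric argument. First show that the Newton polytope $\mathrm{Newton}\big(s_\la*s_\mu(x_1,\dots,x_k)\big)$ equals $\bigcup_{\nu\in K(\la,\mu)}P_k(\nu)$ is convex: this follows if $K(\la,\mu)$ is ``interval-closed'' in dominance order and the extreme $\nu$'s have their permutohedra nested appropriately — the union of permutohedra $P_k(\nu)$ over a dominance interval $[\alpha,\beta]$ is the single permutohedron $P_k(\beta)$ intersected with the halfspaces $\sum_{i\le j}x_i \le \sum_{i\le j}\alpha_i$ is false in general, so more care is needed: instead one shows the union is convex by checking it has the form $\{x\ge 0 : x \text{ sorts to a partition}, \ \beta \succeq \mathrm{sort}(x)\succeq \alpha\}$ only after one verifies that every partition in that interval actually lies in $K(\la,\mu)$ — which is exactly the Horn-feasibility statement from the previous paragraph. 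Then, conversely, every lattice point $\gamma$ of this convex body sorts to a partition $\nu\in K(\la,\mu)$ and lies in $P_k(\nu)\subseteq\mathrm{supp}$, giving equality of support and hull.

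**Main obstacle.** The hard part will be step (ii): proving that $K(\la,\mu)$ really is the full set of partitions in a dominance interval (equivalently, that the Horn inequalities cut out a region with no ``gaps'' among integer points, and that positivity of the governing Littlewood–Richardson coefficient is not accidentally lost at interior lattice points due to the alternating-sum cancellations in the two-row Kronecker formula). This is where the hypotheses $\ell(\la)\le 2$, $\ell(\mu)\le 3$ are essential — they keep the number of Horn inequalities small enough that one can enumerate the facet structure by hand and rule out cancellation — and it is also where $\mu_1\ge\la_1$ enters, ensuring the reduction formula has a sign-definite dominant term. I expect the $\ell(\mu)\le 3$ case (Theorem as stated, versus a pure two-row $\times$ two-row case) to require an extra case analysis on how the three rows of $\mu$ interact with the two rows of $\la$, handled by splitting $\mu=(\mu_1,\mu_2,\mu_3)$ and inducting on $\mu_3$ via the Pieri-type moves that relate $g(\la,\mu,\cdot)$ for $\mu$ and $\mu$ with a box removed.
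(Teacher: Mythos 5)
Your plan stalls at exactly the step you flag as the ``main obstacle,'' and the route you propose for closing it would not work as stated. Horn inequalities characterize positivity of a \emph{single} Littlewood--Richardson coefficient; the two-row reductions of $g((n-a,a),\mu,\nu)$ you invoke express the Kronecker coefficient as an \emph{alternating} sum of LR coefficients, and positivity of an alternating sum is not governed by the Horn system, so your step (i) does not produce a polytope $Q(\la,\mu)$ whose lattice points are $K(\la,\mu)=\{\nu: g(\la,\mu,\nu)>0\}$. Moreover, your step (ii) --- that $K(\la,\mu)$ is a full dominance interval --- is both unproved and beside the point: since $M_k(s_\nu)$ consists of all integer points whose decreasing rearrangement is dominated by $\nu$, the union $\bigcup_{\nu\in K(\la,\mu)}M_k(s_\nu)$ depends only on the dominance-\emph{maximal} elements of $K(\la,\mu)$; interior ``gaps'' are harmless, and what must actually be ruled out is the existence of two incomparable maximal constituents (which does occur for other shapes, as the paper's examples after Proposition~\ref{prop:2-3-row1} show). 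Your proposal supplies no argument for either point, so the proof is not complete.

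The paper closes precisely this point by exhibiting a single dominating constituent. With $\la=(\la_1,\la_2)$, $\mu=(\mu_1,\mu_2,\mu_3)$ and $\mu_1\ge\la_1$, Dvir's theorem gives $\max\{\nu_1: g(\la,\mu,\nu)>0\}=|\la\cap\mu|=\la_1+\mu_2$, and Rosas' closed formula for Kronecker coefficients involving two two-row partitions (Theorem~\ref{thm:kron-coeff}) is used to verify $g(\la,\mu,\nu)=1$ for the two-row partition $\nu=(\la_1+\mu_2,\, n-\la_1-\mu_2)$. Since $\nu$ has only two parts, any constituent $\tau$ satisfies $\tau_1\le\nu_1$ and hence $\tau\preceq\nu$, so $\nu$ is the unique maximal term; by Lemma~\ref{lem:unique_dom}, $M_k(s_\la\ast s_\mu)=M_k(s_\nu)=N_k(s_\nu)$, and SNP follows from the SNP of a single Schur polynomial. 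The essential inputs missing from your sketch are exactly these two: the identification of the candidate dominant partition via $|\la\cap\mu|$, and a positivity certificate for that one coefficient (here an explicit formula, not Horn-type inequalities); no convexity analysis of unions of permutohedra can substitute for them.
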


This theorem follows from the fact that the Kronecker product in these cases contains a term $s_\nu$, with $\nu$ dominating all other partitions appearing in the product. Consequently, the degree vectors of the monomials which appear in that Kronecker product are the integer points $(a_1, \ldots, a_k)$ for which when sorted in weakly decreasing order $sort(a_1, \ldots, a_k) \preceq \nu$ in the dominance order. This ensures that the polytope is saturated. However, it is not always the case that there is a unique maximal term with respect to the dominance order. The first instance where no such dominant partition exists is covered in the following theorem.

\begin{thm}\label{thm:main}
Let $\la,\mu \vdash n$ with $\ell(\la)\leq 3$ and $\ell(\mu)\leq 2$. Then 
$s_\la * s_\mu(x_1,x_2,x_3)$ has a saturated Newton polytope. 
\end{thm}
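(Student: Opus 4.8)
The plan is to turn the SNP statement into a question about which partitions $\nu$ occur in $s_\lambda\ast s_\mu$, resolve that using the Littlewood--Richardson rule and the Horn inequalities, and close with a short polyhedral argument. Work throughout in the plane $H_n=\{x\in\rr^3:x_1+x_2+x_3=n\}$. Since $s_\lambda\ast s_\mu=\sum_\nu g(\lambda,\mu,\nu)s_\nu$ has nonnegative coefficients and each $s_\nu(x_1,x_2,x_3)$ is SNP with Newton polytope the permutohedron $P_\nu:=\mathrm{conv}(S_3\cdot\nu)\subseteq H_n$ (padding $\nu$ to length three), whose lattice points are exactly $\{\alpha\in\zz^3_{\ge0}:\mathrm{sort}(\alpha)\preceq\nu\}$, the support of $f:=s_\lambda\ast s_\mu(x_1,x_2,x_3)$ is $\bigcup_{\nu\in G}(P_\nu\cap\zz^3)$ and $\mathrm{Newt}(f)=Q:=\mathrm{conv}\big(\bigcup_{\nu\in G}P_\nu\big)$, where $G=\{\nu\vdash n:\ell(\nu)\le3,\ g(\lambda,\mu,\nu)>0\}$. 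As $Q$ is $S_3$-invariant, $f$ is SNP iff every partition $\eta\vdash n$ with $\ell(\eta)\le3$ lying in $Q$ is $\preceq\nu$ for some $\nu\in G$; and for three-part partitions of $n$, $\eta\preceq\nu\iff\eta_1\le\nu_1$ and $\eta_3\ge\nu_3$, so the task is: for each such $\eta\in Q$ find $\nu\in G$ with $\nu_1\ge\eta_1$ and $\nu_3\le\eta_3$.

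Next I would analyze $G$. Write $\mu=(n-k,k)$ with $0\le k\le n/2$ ($k=0$ is trivial). From $M^{(n-k,k)}=\bigoplus_{j\le k}\mathbb S_{(n-j,j)}$, Frobenius reciprocity and the Littlewood--Richardson rule,
\[
g\big(\lambda,(n-k,k),\nu\big)=\sum_{\alpha\vdash n-k,\ \beta\vdash k}c^\lambda_{\alpha\beta}c^\nu_{\alpha\beta}\ -\ \sum_{\alpha\vdash n-k+1,\ \beta\vdash k-1}c^\lambda_{\alpha\beta}c^\nu_{\alpha\beta}.
\]
Because $\ell(\lambda)\le3$, only $\alpha,\beta$ with at most three rows occur, so the relevant Littlewood--Richardson coefficients are governed by the short explicit list of three-row Horn inequalities. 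Two things are extracted. (i) \emph{Outer bounds}: from $c^\nu_{\alpha\beta}>0\Rightarrow\nu_1\le\alpha_1+\beta_1$ and $\nu_1+\nu_2\le(\alpha_1+\alpha_2)+(\beta_1+\beta_2)$, with $\alpha\subseteq\lambda$ and $|\beta|=k$, every $\nu\in G$ satisfies $\nu_1\le\lambda_1+k$ and $\nu_3\ge\max(0,\lambda_3-k)$; a few further inequalities of this type confine $G$ to an explicit region of the dominance lattice whose maximal elements form a short antichain $\nu^{(1)},\dots,\nu^{(m)}$. (ii) \emph{No skipping}: for each partition $\eta$ in that region I would exhibit a concrete $\nu\in G$ with $\nu\succeq\eta$ — select a good pair $(\alpha,\beta)$ with $\alpha\vdash n-k$, certify $c^\lambda_{\alpha\beta},c^\nu_{\alpha\beta}>0$ from the Horn lists, and check that this contribution is not cancelled by injecting the terms of the second sum into the first — done uniformly, in cases according to how $k$ compares with $\lambda_2$ and $\lambda_3$. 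In particular the $\nu^{(i)}$ turn out to be ``densely packed'': between consecutive ones the constituent set never jumps, so the staircase they trace in the dominance lattice is as fine as possible.

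The last step is polyhedral. Since $P_{\eta}\subseteq P_{\nu}$ when $\eta\preceq\nu$, we may take $Q=\mathrm{conv}(P_{\nu^{(1)}}\cup\cdots\cup P_{\nu^{(m)}})$, and it remains to check that every partition $\eta\in Q$ is $\preceq$ some $\nu^{(i)}$. In $H_n$ each $P_{\nu^{(i)}}$ is a (possibly degenerate) hexagon with all edges in the directions $e_a-e_b$, and the only obstruction would be a boundary edge of $Q$ in a new direction connecting a vertex of $P_{\nu^{(i)}}$ to a vertex of $P_{\nu^{(i+1)}}$ and carrying an interior lattice point not covered by any $P_{\nu^{(j)}}$ — for example $\mathrm{conv}(P_{(6,6,0)}\cup P_{(8,2,2)})$ contains $(7,4,1)$, which lies in neither triangle. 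The denseness of the $\nu^{(i)}$ from step (ii) rules this out: consecutive maximal constituents are close enough that the connecting edges are primitive and $Q$ acquires no partition beyond those already dominated by some $\nu^{(i)}$. Hence $Q$ is saturated and $f$ is SNP.

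The crux is step (ii). One must control the signed quantity $\sum_{\alpha\vdash n-k,\beta\vdash k}c^\lambda_{\alpha\beta}c^\nu_{\alpha\beta}-\sum_{\alpha\vdash n-k+1,\beta\vdash k-1}c^\lambda_{\alpha\beta}c^\nu_{\alpha\beta}$ uniformly in $\lambda$ (three rows), $k$, and $n$ — both to pin down the dominance-maximal constituents and to certify, through the Horn inequalities, that all the intermediate partitions that the permutohedral geometry forces on us are genuinely constituents. Equivalently, essentially all the content is in showing that the constituent set $\{\nu:\ell(\nu)\le3,\ g(\lambda,(n-k,k),\nu)>0\}$ has no holes relative to the permutohedra it generates; the reduction and the final bookkeeping are comparatively routine.
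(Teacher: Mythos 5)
Your reduction is fine: in three variables the support of $s_\la\ast s_\mu$ is the union of the lattice points of the permutohedra of the constituents, so SNP is equivalent to saying that every partition $\eta\vdash n$, $\ell(\eta)\le 3$, lying in the Newton polytope is dominated by some $\nu$ with $g(\la,\mu,\nu)>0$; and the signed formula $g(\la,(n-k,k),\nu)=\sum_{\al\vdash n-k,\be\vdash k}c^\la_{\al\be}c^\nu_{\al\be}-\sum_{\al\vdash n-k+1,\be\vdash k-1}c^\la_{\al\be}c^\nu_{\al\be}$ and the outer bounds $\nu_1\le\la_1+k$, $\nu_3\ge\la_3-k$ are correct. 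But at that point you have essentially restated the theorem, and the entire content sits in your step (ii), which is only asserted: you never control the cancellation between the two signed sums (no injection is constructed, and "check that this contribution is not cancelled" is exactly the hard positivity question — recall that deciding $g(\la,\mu,\nu)>0$ is $\NP$-hard in general, so some real work is unavoidable even in this restricted case), you never determine the dominance-maximal constituents (the paper's Remark 3.9 shows they can be incomparable, so this is a genuine antichain analysis, not bookkeeping), and the final polyhedral step leans on an unproved "denseness" claim. Your own example $\mathrm{conv}(P_{(6,6,0)}\cup P_{(8,2,2)})\ni(7,4,1)$ identifies precisely the failure mode that must be excluded, but nothing in the proposal excludes it. So there is a genuine gap: the proof of the key step is missing, not merely compressed.

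It is worth contrasting with how the paper avoids this. Rather than determining the Schur support of $s_\la\ast s_\mu$ (which forces you into signed LR sums), the paper works with the monomial coefficients directly: $\langle s_\la\ast s_\mu, h_{\bba}\rangle=\sum_{\al^i\vdash a_i}c^{\la}_{\al^1\al^2\al^3}c^{\mu}_{\al^1\al^2\al^3}$ is a manifestly nonnegative sum of products of multi-LR coefficients, so positivity of a monomial is equivalent to simultaneous positivity of two multi-LR coefficients, with no cancellation to control. The Horn inequalities then turn this into membership of an integer point in an explicit polytope $\mathcal{P}(\mu,\nu,\bba)\subset\mathbb{R}^3$, and the proof consists of two concrete polyhedral facts: nonemptiness is preserved under convex combinations of the exponent vectors (Proposition~\ref{prop:average_polytope}), and nonempty implies an integer point, via a vertex-denominator analysis and local rounding (Theorem~\ref{thm:int_point}). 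If you want to salvage your route, you would need to actually prove the "no holes" statement for the constituent set of $s_\la\ast s_{(n-k,k)}$ (e.g.\ by an explicit non-cancelling witness $(\al,\be)$ for every required $\nu$, uniformly in $k$ and $\la$), which is a substantial piece of work that your proposal currently leaves entirely open.
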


The difficulty with this problem in general lies in the lack of explicit criteria for the positivity of the Kronecker coefficients. Instead of working directly with the Kronecker coefficients, we express the Kronecker product in the monomial basis in terms of sums of products of multi-Littlewood-Richardson coefficients. We then use the Horn inequalities, which determine when a Littlewood-Richardson coefficient would be nonzero, to construct a polytope $\mathcal{P}(\la,\mu; \mathbf{a})$ parametrized by partitions $\la,\mu$ and a composition $\mathbf{a}=(a_1,\ldots,a_k)$. 
A monomial $x_1^{a_1}x_2^{a_2}\cdots x_k^{a_k}$ appears in $s_\la \ast s_\mu(x_1,\ldots,x_k)$ if and only if $\mathcal{P}(\la,\mu;\mathbf{a})$ has an integer point. Using this construction we can infer the following.

\begin{prop}\label{cor:int_point_snp}
   Let $\mu,\la \vdash n$. The Kronecker product $s_\la \ast s_\mu (x_1,\ldots,x_k)$ has a saturated Newton polytope if and only if for every $\bba\in \mathbb{Z}^k$ the polytope $\mathcal{P}(\la,\mu;\bba)$ is either empty or has an integer point. 
\end{prop}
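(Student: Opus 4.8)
The plan is to unwind both sides of the claimed equivalence into statements about lattice points of two polytopes. Write $M_k:=M_k(s_\la\ast s_\mu)$ for the set of exponent vectors of the monomials of $s_\la\ast s_\mu(x_1,\dots,x_k)$, so that (by Definitions \ref{def:snp} and \ref{def:snp_sym_fun}) $s_\la\ast s_\mu$ has SNP in $k$ variables exactly when $\operatorname{conv}(M_k)\cap\mathbb Z^k=M_k$, the inclusion $\supseteq$ being automatic. The integer-point criterion recalled just before the proposition reads $M_k=\{\bba\in\mathbb Z^k:\mathcal P(\la,\mu;\bba)\text{ contains a lattice point}\}$. Extend $\mathcal P(\la,\mu;-)$ to arbitrary real arguments — its defining inequalities (Horn inequalities in the auxiliary coordinates, together with the size and boundary constraints involving $\bba,\la,\mu$) remain meaningful — and set $Q:=\{\bbx\in\mathbb R^k:\mathcal P(\la,\mu;\bbx)\neq\emptyset\}$. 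Since $\mathcal P(\la,\mu;\bbx)$ is the fibre over $\bbx$ of a single bounded rational polyhedron (with $\la,\mu$ frozen), $Q$ is a rational polytope, namely the image of that polyhedron under the linear map to the $\bbx$-coordinates; and the integer-point criterion gives $M_k\subseteq Q$, hence $\operatorname{conv}(M_k)\subseteq Q$. Now the condition ``$\mathcal P(\la,\mu;\bba)=\emptyset$ or contains a lattice point, for every $\bba\in\mathbb Z^k$'' says exactly $Q\cap\mathbb Z^k\subseteq M_k$, whereas SNP says $\operatorname{conv}(M_k)\cap\mathbb Z^k\subseteq M_k$; since $M_k\subseteq\operatorname{conv}(M_k)\cap\mathbb Z^k\subseteq Q\cap\mathbb Z^k$, the two conditions coincide as soon as one knows
\[
Q\cap\mathbb Z^k\ \subseteq\ \operatorname{conv}(M_k),
\]
i.e. $Q\cap\mathbb Z^k=\operatorname{conv}(M_k)\cap\mathbb Z^k$. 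So the whole proposition reduces to this inclusion; in particular the ``if'' direction needs only $\operatorname{conv}(M_k)\subseteq Q$, nothing beyond the easy half.

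To prove the displayed inclusion, fix $\bba\in Q\cap\mathbb Z^k$, pick a rational point of the nonempty rational polytope $\mathcal P(\la,\mu;\bba)$, and clear denominators by some $N\in\mathbb N$. Because the Horn inequalities are homogeneous and the size and boundary data scale linearly, this yields a lattice point of $\mathcal P(N\la,N\mu;N\bba)$ — here one invokes the Knutson--Tao saturation theorem for the Littlewood--Richardson coefficients out of which the Horn system is built, so that a lattice point of a Horn polytope genuinely certifies nonvanishing. By the integer-point criterion for $(N\la,N\mu)$ the monomial $x^{N\bba}$ occurs in $s_{N\la}\ast s_{N\mu}(x_1,\dots,x_k)$, i.e. $N\bba\in\operatorname{conv}\bigl(M_k(s_{N\la}\ast s_{N\mu})\bigr)$. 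Since the exponent vectors of $s_\rho\ast s_\sigma$ are precisely the lattice points whose weakly decreasing rearrangement is $\preceq\nu$ in dominance for some $\nu$ with $g(\rho,\sigma,\nu)>0$, the argument is completed by the polytope inclusion $\operatorname{conv}\bigl(M_k(s_{N\la}\ast s_{N\mu})\bigr)\subseteq N\cdot\operatorname{conv}\bigl(M_k(s_\la\ast s_\mu)\bigr)$ — equivalently: $g(N\la,N\mu,\rho)>0$ implies that $\rho/N$ is a convex combination of permutations of partitions $\nu$ with $g(\la,\mu,\nu)>0$.

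The reverse inclusion here is immediate from the semigroup property of Kronecker positivity ($g(\la,\mu,\nu)>0\Rightarrow g(N\la,N\mu,N\nu)>0$), so the content — and the step I expect to be the main obstacle — is the inclusion just stated. It is purely Kronecker-theoretic and delicate exactly because Kronecker coefficients violate saturation: one cannot simply ``divide $\rho$ by $N$'', since dominance-larger components may appear upon scaling. I would attack it through the rational polyhedral cone generated by the triples with positive Kronecker coefficient; its slice at $(\la,\mu)$ is a polytope containing every ``virtually Kronecker-positive'' $\nu$, and what must be shown is that this slice lies in the convex hull of the dominance down-set of $\{\nu:g(\la,\mu,\nu)>0\}$. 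A possible alternative is to bypass the passage to $(N\la,N\mu)$ altogether by identifying $Q$ directly with the weight polytope of the $\GL_k$-module $\operatorname{Hom}_{\GL(A)}\bigl(V_\mu(A),\,V_\la(A\otimes\mathbb C^k)\bigr)$, whose character is $s_\la\ast s_\mu(x_1,\dots,x_k)$ — that is, showing the Horn inequalities cut out precisely that weight polytope — but either route isolates the same difficulty: controlling how the support of the Kronecker product behaves under scaling.
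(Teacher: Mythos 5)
Your ``if'' direction is complete and matches the paper's intended argument: by Proposition~\ref{prop:kron_monomials} a lattice point of the fiber is the same as membership of $\bba$ in $M_k$, the set $Q=\{\bbx:\mathcal{P}(\la,\mu;\bbx)\neq\emptyset\}$ is convex because it is the image of one joint Horn polytope under a linear projection (this is the general form of the linearity argument in Proposition~\ref{prop:average_polytope}), and $M_k\subseteq Q$ then forces $\operatorname{conv}(M_k)\cap\mathbb{Z}^k\subseteq Q\cap\mathbb{Z}^k\subseteq M_k$. The problem is the ``only if'' direction. You reduce it, correctly, to the inclusion $Q\cap\mathbb{Z}^k\subseteq\operatorname{conv}(M_k)$ --- which is (the lattice-point part of) the identification of the projected Horn polytope with the Newton polytope $N_k(s_\la\ast s_\mu)$ that the paper records in Remark~\ref{rem:extremal} and from which it infers the equivalence --- but you then declare this inclusion ``the main obstacle'' and only sketch two possible attacks. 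As written, your proposal therefore establishes one implication of the stated equivalence, not both.

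Moreover, the route you sketch does not close this gap. Clearing denominators and invoking the Horn/saturation theorem only yields $N\bba\in M_k(s_{N\la}\ast s_{N\mu})$; the step you then need,
\[
\operatorname{conv}\bigl(M_k(s_{N\la}\ast s_{N\mu})\bigr)\ \subseteq\ N\cdot\operatorname{conv}\bigl(M_k(s_\la\ast s_\mu)\bigr),
\]
is nowhere proved and is not supplied by the paper either: Theorem~\ref{thm:limit} shows only that $\bigcup_p\frac{1}{p}M_k(s_{p\la}\ast s_{p\mu})$ is convex, not that it equals $N_k(s_\la\ast s_\mu)$, and the semigroup property gives precisely the opposite inclusion. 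As you yourself observe, the failure of saturation for Kronecker coefficients is exactly what blocks descending from level $N$ to level $1$, so this is not a routine verification one may defer: without a proof of the stretched-polytope equality (equivalently, of the nontrivial half of Remark~\ref{rem:extremal}, $Q\subseteq N_k(s_\la\ast s_\mu)$) or some substitute argument, the ``only if'' half of the proposition remains unproven in your write-up. Concretely: to finish, you must either prove that every (rational) point of $Q$ lies in the convex hull of the exponent vectors actually occurring in $s_\la\ast s_\mu$, or find an argument for ``SNP $\Rightarrow$ nonempty integral fibers have lattice points'' that avoids this identification; the proposal currently contains neither.
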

Since $g(\la,\la,(n))=1$, we have that $s_\la* s_\la=s_{(n)}+\cdots$ and contains every monomial of total degree $n$. This implies that $\mathcal{P}(\la,\la;\mathbf{a})$ is always nonempty and has an integer point. However, it is far from clear how to explicitly characterize when $\mathcal{P}(\la,\mu;\bba) \neq \emptyset$ once $\mu \neq \la$ and the number of variables $k$ grows. Additionally, determining whether there is an integer point, in this case, is likely hard from computational complexity point of view, see~\S\ref{ss:complexity}. It is also not apparent whether these polytopes have any integer vertices, as the associated inequalities often result in many non-integral ones.

While Conjecture~\ref{conj:snp} may not necessarily hold in full generality, its limiting version does:

\begin{thm}\label{thm:limit}
Let $\la,\mu$ be partitions of the same size and $k \in \mathbb{N}$. Then the set of points 
$$\bigcup_{p=1}^\infty \frac{1}{p} M_k(s_{p\la}\ast s_{p\mu})$$ is a convex subset of $\mathbb{Q}^k$. 
\end{thm}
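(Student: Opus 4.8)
The plan is to show that this set is precisely the set of rational points of one fixed rational polytope $\mathcal{K}(\lambda,\mu)\subseteq\mathbb{R}^k$; since the rational points of a convex polytope are closed under rational convex combinations, that makes it a convex subset of $\mathbb{Q}^k$. The two inputs are the polytope $\mathcal{P}(\lambda,\mu;\mathbf a)$ from the construction above, together with its lattice-point criterion ($x^{\mathbf a}$ occurs in $s_\lambda\ast s_\mu(x_1,\dots,x_k)$ if and only if $\mathcal{P}(\lambda,\mu;\mathbf a)$ contains an integer point), and the observation that the linear system cutting out $\mathcal{P}(\lambda,\mu;\mathbf a)$ in its auxiliary coordinates $\mathbf z\in\mathbb{R}^N$ (the intermediate partitions appearing in the multi-Littlewood--Richardson expansion of $\langle s_\lambda\ast s_\mu,h_{\mathbf a}\rangle$) is \emph{homogeneous and linear jointly} in the tuple $(\lambda,\mu,\mathbf a,\mathbf z)$ — including the size constraint, which reads $|\mathbf a|=|\lambda|$ and so carries no constant term once $\lambda$ is allowed to scale. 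This homogeneity is inherited from the Horn--Klyachko description of the support of a Littlewood--Richardson coefficient as the set of lattice points of a homogeneous rational cone, via Knutson--Tao saturation. In particular it yields, for every integer $t\ge 1$, the equality of polytopes $\mathcal{P}(t\lambda,t\mu;t\mathbf a)=t\cdot\mathcal{P}(\lambda,\mu;\mathbf a)$, and more precisely: $(\mathbf a,\mathbf z)$ satisfies the inequalities defining $\mathcal{P}(\lambda,\mu;\mathbf a)$ exactly when $(t\mathbf a,t\mathbf z)$ satisfies those defining $\mathcal{P}(t\lambda,t\mu;t\mathbf a)$.

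Concretely, I would let $\Pi=\Pi(\lambda,\mu)\subseteq\mathbb{R}^k\times\mathbb{R}^N$ be the set of pairs $(\mathbf v,\mathbf z)$ satisfying the (rational, homogeneous) defining system with boundary data $(\lambda,\mu,\mathbf v)$; it is bounded, since all coordinates are constrained between $0$ and $|\lambda|$, hence a rational polytope. Let $\mathcal{K}(\lambda,\mu)$ be its image under the projection onto the $\mathbf v$-coordinates, again a rational polytope, so that $\mathbf v\in\mathcal{K}(\lambda,\mu)$ precisely when $\mathcal{P}(\lambda,\mu;\mathbf v)\neq\emptyset$. The claim is then
$$\bigcup_{p=1}^{\infty}\frac1p\,M_k\bigl(s_{p\lambda}\ast s_{p\mu}\bigr)\;=\;\mathcal{K}(\lambda,\mu)\cap\mathbb{Q}^k,$$
which, as noted, proves the theorem.

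The inclusion ``$\subseteq$'' is immediate: if $\mathbf a\in M_k(s_{p\lambda}\ast s_{p\mu})$ then $\mathcal{P}(p\lambda,p\mu;\mathbf a)$ contains an integer point $\mathbf z_0$, and dividing the homogeneous defining inequalities by $p$ shows $(\tfrac1p\mathbf a,\tfrac1p\mathbf z_0)\in\Pi$, so $\tfrac1p\mathbf a\in\mathcal{K}(\lambda,\mu)\cap\mathbb{Q}^k$. For ``$\supseteq$'', take $\mathbf v\in\mathcal{K}(\lambda,\mu)\cap\mathbb{Q}^k$. The fibre $\{\mathbf z:(\mathbf v,\mathbf z)\in\Pi\}$ is a nonempty rational polytope, hence contains a rational point $\mathbf z_1$; choosing $D\in\mathbb{N}$ to clear the denominators of $\mathbf v$ and $\mathbf z_1$ and multiplying the homogeneous inequalities by $D$ shows that $D\mathbf z_1\in\mathbb{Z}^N$ is an integer point of $\mathcal{P}(D\lambda,D\mu;D\mathbf v)$ — where $D\lambda,D\mu$ are genuine partitions and $D\mathbf v$ is a genuine composition of $D|\lambda|$ — so the lattice-point criterion gives $D\mathbf v\in M_k(s_{D\lambda}\ast s_{D\mu})$, whence $\mathbf v=\tfrac1D(D\mathbf v)$ lies in the left-hand union.

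The one step that genuinely requires care is the homogeneity input: one must trace through the construction of $\mathcal{P}(\lambda,\mu;\mathbf a)$ — the expansion of $\langle s_\lambda\ast s_\mu,h_{\mathbf a}\rangle$ into sums of products of multi-Littlewood--Richardson coefficients, and the replacement of each coefficient's positivity by its Horn inequalities together with saturation — and verify that no additive constants ever appear, so that the two partitions, the target composition, and all the auxiliary partitions enter the system linearly and homogeneously. Granting that (it is exactly how $\mathcal{P}$ is built), the rest is the elementary denominator-clearing bookkeeping above; as a byproduct, since $\mathcal{K}(\lambda,\mu)\cap\mathbb{Q}^k$ is dense in $\mathcal{K}(\lambda,\mu)$, the closure of the limit set is the rational polytope $\mathcal{K}(\lambda,\mu)$ itself.
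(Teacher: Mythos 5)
Your argument is correct, but it is a genuinely different proof from the one in the paper. The paper's proof never touches the Horn polytope: it uses the characterization $M_k(s_{p\la}\ast s_{p\mu})=\bigcup_{\nu\colon g(p\la,p\mu,\nu)>0}M_k(s_\nu)$ together with the semigroup property of Kronecker coefficients — given points $\al^i\in\frac{1}{p_i}M_k(s_{p_i\la}\ast s_{p_i\mu})$, it passes to $p=\mathrm{lcm}(p_i)$, picks dominating partitions $\ga^i\succ p\al^i$ with $g(p\la,p\mu,\ga^i)>0$, writes a rational convex combination with weights $q_i/q$, and adds triples to get $g(pq\la,pq\mu,\sum_iq_i\ga^i)>0$, so the combined monomial appears; Caratheodory and a rationality-of-weights remark finish it. You instead observe that the system from Lemma~\ref{lem:multiLR-ordinary} and Corollary~\ref{cor:polytope_horn} (including the size constraint) is linear and homogeneous jointly in $(\la,\mu,\bba)$ and the auxiliary variables, so that $\mathcal{P}(t\la,t\mu;t\bba)=t\,\mathcal{P}(\la,\mu;\bba)$, and then identify the limit set with the rational points of the projected rational polytope $\mathcal{K}(\la,\mu)$ by clearing denominators in both directions; the verification of homogeneity is indeed the only delicate point, and it does hold since the only ``constants'' $n(k-i)$ are multiples of $n=|\la|$, which scales with the data. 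Your route buys more than the theorem asks: it exhibits the set as exactly $\mathcal{K}(\la,\mu)\cap\mathbb{Q}^k$, i.e.\ the closure is a rational polytope, which is precisely the moment-polytope phenomenon the paper only alludes to after the theorem statement; the cost is that you rely on the full Horn/Knutson--Tao machinery (via Proposition~\ref{prop:kron_monomials} and Corollary~\ref{cor:polytope_horn}), whereas the paper's proof needs only the much softer semigroup property. One small bookkeeping point to make explicit if you write this up: the auxiliary variables should live in dimension $\min\{\ell(\la),\ell(\mu)\}\cdot k$ (so the same Horn system serves both $\la$ and $\mu$, and it is unchanged under scaling since lengths are preserved), and the nonnegativity and monotonicity constraints on the $\al^i$ must be kept in the homogeneous system — as you do.
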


This is not surprising given that the set of triples $\frac{1}{|\la|}(\la,\mu,\nu)$ for which there is a $p$, such that $g(p\la,p\mu,p\nu)>0$, forms a polytope known as the Moment polytope, see~\cite{vergne2017inequalities,burgisser2017membership}.  

\subsection{Positivity implications}

Suppose that  $g(\la,\mu,\al) >0$ and $g(\la,\mu,\be)>0$ for some partitions $\al,\be$. Then the monomials with powers $\al$ and $\be$ appear in $s_{\la}*s_\mu$. Suppose that $\gamma = t\al+(1-t)\be \in \mathbb{Z}^k$ for some $t=\frac{p}{q} \in \mathbb{Q}$ with $p<q$. The SNP property would imply that $\gamma$ appears as a monomial, and thus there exists a partition $\theta \succ \gamma$ such that $g(\la,\mu,\theta)>0$. 
By the semigroup property, if $g(p\lambda, p\mu, p\alpha) > 0$ and $g((q-p)\lambda, (q-p)\mu, (q-p)\beta) > 0$, then it follows that $g(q\lambda, q\mu, q\gamma) > 0$. However, the Kronecker coefficients do not, in general, possess the saturation property (see \S\ref{ss:kron_sat}), so we cannot expect $g(\la,\mu,\ga)>0$. We can generalize the above reasoning into the following.

\begin{prop}\label{cor:snp_positivity}
Suppose that $s_\la * s_\mu$ has a saturated Newton polytope. Then for every collection of partitions $\al^1,\al^2,\ldots$ such that $g(\la,\mu,\al^i)>0$ and $\sum_i t_i \al^i $ has integer parts for some $t_i \in  [0,1]$ with $t_1+t_2+\cdots=1$, there exists a partition $\theta \succeq sort(\sum_i t_i \al^i)$ in the dominance order, such that $g(\la,\mu,\theta)>0$. 
\end{prop}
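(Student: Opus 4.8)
The plan is to convert the hypotheses about positivity of Kronecker coefficients into statements about which monomials occur in $s_\la\ast s_\mu(x_1,\dots,x_k)$, use the SNP hypothesis to interpolate between them, and convert back. First I would observe that since all $\al^i$ are partitions of $n:=|\la|$ and there are only finitely many $\nu\vdash n$ with $g(\la,\mu,\nu)>0$, we may assume the collection $\{\al^i\}$ is finite; fix $k'\ge \max(k,\max_i\ell(\al^i))$ and regard each $\al^i$ as a point of $\mathbb{Z}^{k'}$ by appending zeros. (At the end it suffices to intersect with the first $k$ coordinates, since $\gamma$ has at most $k$ nonzero parts by hypothesis; alternatively just take $k$ already large enough.)

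The key step is the translation. Since $s_\la\ast s_\mu=\sum_\nu g(\la,\mu,\nu)\,s_\nu$ and $s_\nu=\sum_\beta K_{\nu\beta}\,m_\beta$, the coefficient of $m_\beta$ in $s_\la\ast s_\mu$ equals $\sum_\nu g(\la,\mu,\nu)\,K_{\nu\beta}$, which is a sum of nonnegative terms, so no cancellation occurs. Taking $\beta=\al^i$ and using $K_{\al^i\al^i}=1$ together with $g(\la,\mu,\al^i)>0$, this coefficient is strictly positive, hence $\al^i\in M_{k'}(s_\la\ast s_\mu)$ for every $i$.

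Now set $\gamma=\sum_i t_i\al^i$. By hypothesis $\gamma$ has integer parts, and it is automatically a weak composition of $n$ with nonnegative entries, hence the exponent vector of a genuine monomial. Being a convex combination of the points $\al^i\in M_{k'}(s_\la\ast s_\mu)$, the point $\gamma$ lies in the convex hull of $M_{k'}(s_\la\ast s_\mu)$; by the SNP hypothesis this convex hull equals $M_{k'}(s_\la\ast s_\mu)$, so $\gamma\in M_{k'}(s_\la\ast s_\mu)$. Equivalently, $m_{sort(\gamma)}$ occurs with positive coefficient in $s_\la\ast s_\mu$, i.e.\ $\sum_\nu g(\la,\mu,\nu)\,K_{\nu,\,sort(\gamma)}>0$. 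Therefore some partition $\theta\vdash n$ has $g(\la,\mu,\theta)>0$ and $K_{\theta,\,sort(\gamma)}>0$; since $K_{\theta\xi}>0$ if and only if $\theta\succeq\xi$ in dominance order for partitions $\theta,\xi$ of the same size, we conclude $\theta\succeq sort(\gamma)$, which is the claim.

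I do not expect a genuine obstacle here: all the weight of the statement is carried by the SNP hypothesis, and the remaining ingredients — nonnegativity of $g$ and of Kostka numbers (so monomials never cancel), the normalization $K_{\nu\nu}=1$, and the dominance characterization of positivity of Kostka numbers — are classical. The only points needing mild care are that $\gamma$ need not itself be a partition, so one works with $m_{sort(\gamma)}$ and $sort(\gamma)$ throughout, and that $k$ must be chosen large enough to house all the $\al^i$ at once; neither is a real difficulty.
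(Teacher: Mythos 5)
Your argument is correct and follows essentially the same route as the paper, which justifies this proposition by the reasoning sketched just before its statement: positivity of $g(\la,\mu,\al^i)$ plus $K_{\al^i\al^i}=1$ puts each $\al^i$ in $M_k(s_\la\ast s_\mu)$, the SNP hypothesis places the integral convex combination $\gamma$ there too, and the monomial expansion $\sum_\nu g(\la,\mu,\nu)K_{\nu\al}$ together with the dominance criterion for Kostka positivity yields the partition $\theta\succeq sort(\gamma)$. Your added care about finiteness of the collection and choosing the number of variables large enough is harmless and consistent with the paper's definition of SNP for symmetric functions.
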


Our methods and the Horn inequalities also give some necessary conditions for a Kronecker coefficient to be positive. We cannot expect easy necessary and sufficient criteria for positivity since this decision problem is $\NP$-hard by~\cite{IMW}. The general statement is Theorem~\ref{prop:kron_pos_gen} stated in Section~\ref{s:pos} in terms of the so-called LR-consistent triples. We illustrate the criteria with a simplified version below in the case of one two-row partition. 

\begin{prop}[Corollary~\ref{cor:kron_pos_2}]
    Suppose that $g(\la,\mu,\nu)>0$ and $\ell(\mu)=2$, $k=\ell(\la)$. Then there exist nonnegative integers $y_i \in [0, \lfloor \la_i/2 \rfloor ]$ for $i\in[k]$, such that
    \begin{align}
    \sum_{i\in A \cup C} \la_i +\sum_{i \in B }  y_i -\sum_{i\in C} y_i  \leq   \min\{\sum_{j \in J} \mu_j, \sum_{j \in J} \nu_j\}
\end{align}
for all triples of mutually disjoint sets $A \sqcup B \sqcup C \subset[k]$ and $J=\{1,\ldots,r,r+2,\ldots,r+b+1\}$ or $J= \{1,\ldots,r+b-1,r+2b\}$, where $r=2|A|+|C|$ and $b=|B|$.
\end{prop}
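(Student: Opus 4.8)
\medskip

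The plan is to route through the polytope $\mathcal{P}$ of Proposition~\ref{cor:int_point_snp}, using the symmetry of the Kronecker coefficients and then the two-row hypothesis on $\mu$ to make the Horn inequalities defining $\mathcal{P}$ explicit. First I would use $g(\la,\mu,\nu)=g(\mu,\nu,\la)$, so $s_\la$ occurs in $s_\mu\ast s_\nu$; in particular the monomial $x_1^{\la_1}\cdots x_k^{\la_k}$ with $k=\ell(\la)$ occurs in $s_\mu\ast s_\nu(x_1,\dots,x_k)$, its coefficient being $\sum_\rho g(\mu,\nu,\rho)K_{\rho\la}\ge g(\mu,\nu,\la)>0$. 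Because $\mathcal{P}(\mu,\nu;\la)$ is built from the coproduct identity $\langle s_\mu\ast s_\nu,\,h_{\la_1}\cdots h_{\la_k}\rangle=\sum_{\theta^1\vdash\la_1,\dots,\theta^k\vdash\la_k}c^\mu_{\theta^1,\dots,\theta^k}\,c^\nu_{\theta^1,\dots,\theta^k}$, so that its integer points record tuples of partitions $\theta^i\vdash\la_i$ with $c^\mu_{\theta^1,\dots,\theta^k}>0$ and $c^\nu_{\theta^1,\dots,\theta^k}>0$, we obtain such a tuple $(\theta^1,\dots,\theta^k)$. Since $\ell(\mu)\le 2$, positivity of $c^\mu_{\theta^1,\dots,\theta^k}$ means $s_\mu$ occurs in $\prod_i s_{\theta^i}$, and specializing to two variables forces every $s_{\theta^i}(x_1,x_2)\ne 0$, hence $\ell(\theta^i)\le 2$ for all $i$; writing $\theta^i=(\la_i-y_i,\,y_i)$ then gives exactly $y_i\in[0,\lfloor\la_i/2\rfloor]$, and these are the integers whose existence is asserted.

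It remains to read off the inequalities these $y_i$ satisfy, which is where I would invoke the Horn inequalities characterizing positivity of the multi--Littlewood--Richardson coefficients $c^\mu_{\theta^1,\dots,\theta^k}$ and $c^\nu_{\theta^1,\dots,\theta^k}$ --- exactly what the general criterion Theorem~\ref{prop:kron_pos_gen} packages. Given disjoint $A,B,C\subseteq[k]$, consider the row selection $I_i=\{1,2\}$ for $i\in A$, $I_i=\{1\}$ for $i\in C$, $I_i=\{2\}$ for $i\in B$, and $I_i=\emptyset$ otherwise; then $\sum_i\sum_{r\in I_i}\theta^i_r=\sum_{i\in A\cup C}\la_i+\sum_{i\in B}y_i-\sum_{i\in C}y_i$ is exactly the left-hand side in the statement, while $\sum_i|I_i|=2|A|+|C|+|B|=r+b$. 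Using the trace identity $\sum_j\nu_j=\sum_j\mu_j=n=\sum_i\la_i$ to complement a Horn inequality, positivity of $c^\nu_{\theta^1,\dots,\theta^k}$ yields $\sum_i\sum_{r\in I_i}\theta^i_r\le\sum_{j\in J}\nu_j$ for every admissible target set $J$ of size $r+b$, and likewise with $\mu$ in place of $\nu$; since the same tuple $(\theta^i)$ witnesses both positivity statements, the left-hand side is bounded by both, hence by their minimum. Finally I would identify the admissible $J$: unwinding the Horn recursion in this two-row configuration, the partitions that enter it are near-rectangular (empty shapes, single columns, and a rectangle coming from the $A$-rows), the auxiliary Littlewood--Richardson coefficient reduces to counting sequences of vertical-strip additions, and the two extremal admissible $J$ --- those attached to the partitions $(b)$ and $(1^b)$ of $b$ under the relevant box-complementation --- are precisely $J=\{1,\dots,r,r+2,\dots,r+b+1\}$ and $J=\{1,\dots,r+b-1,r+2b\}$, the two recorded in this (deliberately simplified) statement.

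The step I expect to be the main obstacle is this last identification: carrying the Horn recursion through the two-row configuration carefully enough to see that these target sets $J$ are admissible (equivalently, extracting them from the LR-consistent-triple description of Theorem~\ref{prop:kron_pos_gen}), together with tracking the complementation that converts the native lower Horn bounds into the upper bound $\min\{\sum_{j\in J}\mu_j,\,\sum_{j\in J}\nu_j\}$, including the degenerate ranges $r+b\le 1$ where the two families coincide. By comparison, the reduction in the first paragraph is routine once Proposition~\ref{cor:int_point_snp} and its underlying monomial expansion are in hand.
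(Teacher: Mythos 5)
Your proposal follows essentially the same route as the paper: reduce to a common tuple of two-row partitions $\theta^i=(\la_i-y_i,y_i)$ witnessing $c^\mu_{\theta^1,\ldots,\theta^k}c^\nu_{\theta^1,\ldots,\theta^k}>0$, then specialize the mLR-consistent triples of Theorem~\ref{prop:kron_pos_gen} via the block row-selections indexed by $A,B,C$ and identify the admissible $J$ with contents $(b)$ and $(1^b)$. The step you flag as the main obstacle is resolved in the paper by a short observation rather than any Horn recursion: with $K_j=I_j$ on $A\cup C$ and $I_j=\{2j\},K_j=\{2j-1\}$ on $B$, the skew shape $\rho(I)/\rho(K)$ is a disjoint union of $b$ boxes with no two in a row or column, so $c^{\rho(I)}_{\rho(J)\rho(K)}=1$ exactly when $\rho(J)$ is a single row or single column, yielding the two stated sets $J$.
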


\subsection{}\label{ss:snp_lit}
The topic of studying Newton polytopes in algebraic combinatorics has quickly gained momentum. Already in~\cite{Yong} it was shown that, besides Schur and skew Schur functions, the Stanley symmetric functions and Macdonald polynomials have SNP. Since~\cite{Yong} there has been a flurry of activity, studying the SNP property for Schubert polynomials, see~\cite{fink2018schubert,castillo2023double}, Grothendieck polynomials, see~\cite{escobar2017newton, meszaros2020generalized} and chromatic symmetric functions, see~\cite{monicalthesis,matherne2022newton}. In the case of Schubert and double Schubert polynomials, the conjectures were completely resolved, revealing a nice geometric structure. For Grothendieck polynomials, the conjectures were resolved for Grassmannian permutations but remain open in general. In the case of chromatic symmetric functions, the conjectures hold for claw-free incomparability graphs and are false in other cases. Naturally, saturated Newton polytopes are a necessary condition for stronger nice properties like the (normalized) polynomials being Lorenzian, or having log-concave coefficients. It is also useful in the study of positivity questions and Schur expansions. 

\subsection{Paper outline}
We define the necessary background objects and tools in Section~\ref{s:def}. In Section~\ref{s:2-3-row}, we study cases where there is a unique maximal dominant partition in the Kronecker product. In this section, we prove Theorem~\ref{thm:2row} and one case of Theorem~\ref{thm:main}. In Section~\ref{s:LRs}, we reformulate the problem in terms of multi-Littlewood-Richardson coefficients, recall the Horn inequalities and define the polytope $\mathcal{P}(\la,\mu;\bba)$. In Section~\ref{s:int_points}, we consider partitions of length 2 and 3 and show that when $\mathcal{P}(\la,\mu;\bba)$ is nonempty it has an integer point proving Theorem~\ref{thm:main}. We study positivity properties of Kronecker coefficients in Section~\ref{s:pos}. In Section~\ref{s:final}, we discuss related aspects of the Kronecker SNP problem, including some observations for the analogous problem on plethysms in~\S\ref{ss:plethysm}.

\section{Definitions and tools}\label{s:def}

\subsection{Basic notions from algebraic combinatorics}
We use standard notation from~\cite{Mac} and~\cite[$\S$7]{S1}
throughout the paper.

Let $\la=(\la_1,\la_2,\ldots,\la_\ell)$ be a \defn{partition}
of size $n:=|\la|=\la_1+\la_2+\ldots+\la_\ell$, where
$\la_1\ge \la_2 \ge \ldots \ge\la_\ell\ge 1$.  We write
$\la\vdash n$ for such a partition. The length of~$\la$
is denoted $\ell(\la):=\ell$.  Let $\la+\mu$ denote a partition
$(\la_1+\mu_1,\la_2+\mu_2,\ldots)$

\ytableausetup{boxsize=2ex}

A \defn{Young diagram} of
\emph{shape}~$\la$ is an arrangement of squares
$(i,j)\ssu \nn^2$ with $1\le i\le \ell(\la)$
and $1\le j\le \la_i$.  Let $\la\vdash n$.
A \defn{semistandard Young tableau} (SSYT) $T$ of
\emph{shape}~$\la$ and \emph{type}~$\al$ is an
arrangement of $\al_k$ many integers~$k$ in squares
of~$\la$, which weakly increase along rows and strictly
increase down columns, i.e. $T(i,j) \leq T(i,j+1)$ and $T(i,j) \leq T(i+1,j)$. For example, 
\[\ytableaushort{11244,2235,45}\] is an SSYT of shape $\la=(5,4,2)$ and type (weight) $\al=(2,3,1,3,2)$.
Denote by $\SSYT(\la,\al)$ the
set of such tableaux, and \ts
$K_{\la,\al} = \bigl|\SSYT(\la,\al)\bigr|$ \ts the
\defn{Kostka number}. We have that $K_{\la,\al}=K_{\la,sort(\al)}$, where $sort(\al)$ is the partition obtained by listing the parts of $\al$ in weakly decreasing order.  When $\la$ and $\al$ are both partitions, we have $K_{\la,\al}>0$ if and only if $\la \succ \al$ in the \defn{dominance order} on partitions, which is defined as follows:
$$\la \succ \al \Longleftrightarrow \la_1+\cdots+\la_i \geq \al_1+\cdots +\al_i \text{ for all } i.$$
 A \defn{standard Young tableau} (SYT) of shape $\la$ is an SSYT of type $(1^n)$.

The irreducible representations of the \emph{symmetric group} $S_n$ are the \defn{Specht modules} $\mathbb{S}_\la$ and are indexed by partitions $\la \vdash n$. 
We have that $\mathbb{S}_{(n)}$ is the trivial representation assigning to every $w\in S_n$ the value $1$ and $\mathbb{S}_{1^n}$ is the sign representation. 

The \defn{Kronecker coefficients} of $S_n$, denoted by $g(\la,\mu,\nu)$ give the multiplicities of one Specht module in the tensor product of two other irreducibles, namely
$$\mathbb{S}_\la \otimes \mathbb{S}_\mu = \oplus_{\nu \vdash n} \mathbb{S}_\nu^{\oplus g(\la,\mu,\nu)}.$$ 
  
The irreducible polynomial representations of  $GL_N(\mathbb{C})$ are the \defn{Weyl modules} $V_\la$ and are indexed by all partitions with $\ell(\la) \leq N$. Their characters are  the Schur polynomials $s_\la(x_1,\ldots,x_N)$, where $x_1,\ldots,x_N$ are the eigenvalues of $g \in GL_N(\mathbb{C})$.

We will use the standard bases for the ring of symmetric functions $\Lambda$: the \defn{monomial} symmetric functions
$$m_\al(x_1,x_2,\ldots,x_k) = \sum_{\sigma} x_{\sigma_1}^{\al_1}x_{\sigma_2}^{\al_2}\cdots ,$$ where the sum goes over all  permutations $\sigma$ giving different monomials. 

The \defn{Schur functions} $s_\la$ can be defined as the generating functions for SSYTs of shape $\la$, i.e.
\begin{equation}\label{eq:schur_ssyt}
s_\la =  \sum_\al K_{\la \al}m_\al.
\end{equation}
We will also use the \defn{homogeneous} symmetric functions $h_\la$ defined as 
$h_k := s_{(k)}=\sum_{i_1 \leq \cdots \leq i_k} x_{i_1}\cdots x_{i_k}$ and $h_\la:= h_{\la_1} h_{\la_2}\cdots$. They are dual to the monomial symmetric functions under the inner product of the ring of symmetric functions $\Lambda$:
$$\langle h_\la , m_\mu \rangle = \delta_{\la,\mu} = \langle s_\la, s_\mu \rangle,$$
where $\delta_{\la,\mu}=1$ if and only if $\la=\mu$ and is 0 otherwise. 

The \defn{Littlewood-Richardson} coefficients $c^{\la}_{\mu\nu}$ are defined as structure constants in $\Lambda$ for the Schur basis, and also as the multiplicities in the $GL$-module decomposition $V_\mu \otimes V_\nu = \oplus_\la V_\la^{c^{\la}_{\mu\nu}}$. We have
$$s_\mu s_\nu = \sum_\la c^{\la}_{\mu\nu} s_\la.$$

They can be evaluated by the \defn{Littlewood-Richardson rule} as a positive sum of skew SSYT of shape $\la/\mu$ and type $\nu$ whose reverse reading word is a ballot sequence. The reverse reading word of a tableau is the sequence obtained by reading the tableau from top to bottom and right to left. A ballot sequence is a sequence of positive integers such that in every prefix of the sequence and for every $i$, there are at least as many occurrences of $i$ as there are of $i + 1$. For example, we have $c^{(6,4,3)}_{(3,1),(4,3,2)}=2$ as there are two LR tableaux, i.e., SSYT of shape $(6,4,3)/(3,1)$ and type $(4,3,2)$ whose reading words are ballot sequences. These tableaux are:
\begin{align*}
\ytableaushort{\none\none\none 111,\none 122,233}\, \text{ with reading word } 111221332
\text{, and } \ytableaushort{\none\none\none 111,\none 222,133}\, \text{ with reading word } 111222331. 
\end{align*}

Their positivity can be decided in poylnomial time as $c^{\la}_{\mu\nu} >0$ if and only if its corresponding polytope is nonempty, see~\cite{KT,MNS}. 
The \defn{multi-Littlewood-Richardson (multi-LR) coefficients} can be defined recursively as
$$c^\la_{\nu^1,\nu^2,\ldots,\nu^k} := \langle s_\la, s_{\nu^1}s_{\nu^2}\cdots s_{\nu^k}\rangle = \sum_{\tau^1,\tau^2,\ldots,\tau^k} c^{\la}_{\nu^1 \tau^1} c^{\tau^1}_{\nu^2 \tau ^2} \cdots c^{\tau^{k-1}}_{\nu^k \tau ^k},$$
where the sum is over all sequences of partitions $(\tau^1,\ldots,\tau^k)$, such that $|\tau^i| = |\tau^{i-1}|-|\nu^i|$ for $i=2,\ldots,k$ and $|\tau^1|=|\la|-|\nu^1|.$

\subsection{The Kronecker product}
The Kronecker product, denoted by $*$, of symmetric functions can be defined on the basis of the Schur functions and extended by linearity:
$$s_\la * s_\mu = \sum_\nu g(\la,\mu,\nu) s_\nu.$$

It is also ${\rm ch}(\chi^\la \chi^\mu)$, where $\chi^\la$ and  $\chi^\mu$ are the $S_n$ characters indexed by $\la$ and $\mu$, respectively, and ${\rm ch}$ is the Frobenius characteristic map. The Kronecker coefficients can be equivalently defined as the coefficients in the following expansion\begin{equation}\label{eq:kron_pleth}
s_\la[x\cdot y] = \sum_{\mu,\nu} g(\la,\mu,\nu) s_\mu(x) s_\nu(y)
\end{equation} where $[x\cdot y]:= (x_1y_1,x_1y_2,\ldots,x_2y_1,\ldots)$ denote all pairwise products of the two sets of variables. From this identity, we can derive the triple Cauchy identity, see e.g. \cite[Exercise 7.78]{stanley1997enumerative}:
\begin{equation}\label{eq:tripple_cauchy}
\sum_{\la,\mu,\nu} g(\la,\mu,\nu) s_\la(x) s_\mu(y) s_\nu(z) = \prod_{i,j,k} \frac{1}{1-x_iy_jz_k}.
\end{equation}

It is clear from the last equation that the Kronecker coefficient is invariant under the permutation of the three partitions, i.e. $g(\la,\mu,\nu) = g(\nu,\mu,\la)=\cdots$.

Via Schur-Weyl duality the Kronecker coefficients can be interpreted as the dimensions of $GL$ highest weight spaces, which then makes the following \defn{semigroup property}, apparent:
\begin{lemma}[\cite{CHM}]
    If $\al^1, \be^1, \ga^1 \vdash n$ and $\al^2,\be^2,\ga^2 \vdash m$ satisfy $g(\al^i,\be^i,\ga^i)>0$ for $i=1,2$, then $g(\al^1+\al^2, \be^1+\be^2, \ga^1+\ga^2) \geq \max\{ g(\al^1,\be^1,\ga^1), g(\al^2,\be^2,\ga^2)\}$.
\end{lemma}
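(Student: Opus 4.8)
The plan is to deduce the statement from the Schur--Weyl dictionary relating Kronecker coefficients to multiplicities in symmetric powers of a tensor product, combined with the elementary fact that a polynomial ring is an integral domain.

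First I would record the $\GL$-picture implicit in the sentence preceding the statement. Restricting the triple Cauchy identity \eqref{eq:tripple_cauchy} to finitely many variables in each alphabet and comparing homogeneous components of degree $N$ gives, for vector spaces $A,B,C$ with $\dim A,\dim B,\dim C\ge N$, the $\GL(A)\times\GL(B)\times\GL(C)$-module decomposition
\[
\Sym^N(A\otimes B\otimes C)\;\cong\;\bigoplus_{\la,\mu,\nu\,\vdash\,N}\bigl(V_\la(A)\otimes V_\mu(B)\otimes V_\nu(C)\bigr)^{\oplus\,g(\la,\mu,\nu)}.
\]
Fix the Borel which is the product of the upper-triangular Borels, and let $U$ be the corresponding maximal unipotent. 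Writing $R=\Sym(A\otimes B\otimes C)=\bigoplus_{N}\Sym^N(A\otimes B\otimes C)$, the displayed decomposition says that $g(\la,\mu,\nu)$ equals the dimension of the highest weight space $(R^U)_{(\la,\mu,\nu)}$, i.e.\ the space of $U$-fixed vectors of weight $(\la,\mu,\nu)$, which sits inside $\Sym^{N}(A\otimes B\otimes C)$ for $N=|\la|$.

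Next I would set $c_i:=g(\al^i,\be^i,\ga^i)>0$ and $H_i:=(R^U)_{(\al^i,\be^i,\ga^i)}$ for $i=1,2$, so $\dim H_i=c_i$, with $H_1\subseteq\Sym^{n}$ and $H_2\subseteq\Sym^{m}$. Since $\GL(A)\times\GL(B)\times\GL(C)$ acts on the ring $R$ by algebra automorphisms, $R^U$ is a graded subalgebra and its weight decomposition is multiplicative: $(R^U)_\theta\cdot(R^U)_\phi\subseteq(R^U)_{\theta+\phi}$. Let $H:=(R^U)_{(\al^1+\al^2,\be^1+\be^2,\ga^1+\ga^2)}\subseteq\Sym^{n+m}(A\otimes B\otimes C)$; since $\al^1+\al^2,\be^1+\be^2,\ga^1+\ga^2$ are again partitions (of $n+m$), the dictionary gives $\dim H=g(\al^1+\al^2,\be^1+\be^2,\ga^1+\ga^2)$, and $f_1 f_2\in H$ for all $f_1\in H_1$, $f_2\in H_2$. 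Finally, $R$ is a polynomial ring, hence an integral domain: fixing any $0\ne f_2\in H_2$, the linear map $H_1\to H,\ f_1\mapsto f_1 f_2$ is injective, so $\dim H\ge\dim H_1=c_1$; fixing $0\ne f_1\in H_1$ gives $\dim H\ge c_2$. Hence $g(\al^1+\al^2,\be^1+\be^2,\ga^1+\ga^2)=\dim H\ge\max\{c_1,c_2\}$, as claimed.

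There is no genuine obstacle here; the only points requiring a line of justification are the standard facts that highest weight vectors form a subalgebra on which the torus weight is additive, and that the multiplicity appearing in $\Sym^N(A\otimes B\otimes C)$ is exactly $g(\la,\mu,\nu)$ once the ambient dimensions are large — both settled by fixing $\dim A,\dim B,\dim C\ge n+m$ at the outset. (Alternatively one can run the same injectivity argument via the embeddings $S_n\times S_m\hookrightarrow S_{n+m}$ together with Littlewood--Richardson and Frobenius reciprocity, but the $\GL$-version is the most transparent.)
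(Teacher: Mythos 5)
Your argument is correct and is precisely the route the paper has in mind: it cites \cite{CHM} and notes that the semigroup property is ``apparent'' once $g(\la,\mu,\nu)$ is read as the dimension of the $(\la,\mu,\nu)$ highest-weight space in $\Sym(A\otimes B\otimes C)$, and your write-up simply fleshes out that standard proof (multiply highest weight vectors, use that the polynomial ring is a domain to get injectivity and hence the $\max$ bound). No gaps; the only hypotheses you need, namely $\dim A,\dim B,\dim C\ge n+m$ and additivity of weights on $R^U$, are exactly the ones you state.
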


Here we will be concerned with the monomial expansion. Since the homogeneous and monomial bases are orthogonal to each other, i.e. $\langle h_\la ,m_\mu \rangle = \delta_{\la,\mu}$, we have that 

\begin{align}\label{eq:kron_mon}
s_\la \ast s_\mu = \sum_\nu g(\la,\mu,\nu) s_\nu = \sum_{\nu,\al} g(\la,\mu,\nu) K_{\nu \al} m_\al = \sum_{\al \vdash n} \langle s_\la \ast s_\mu, h_\al\rangle m_\al.
\end{align}

In Section~\ref{s:LRs} we will see further ways of finding the monomial expansion.

\subsection{Newton polytopes}

Let $f(x_1,\ldots,x_k) = \sum_\al c_\al x^\al$ be a polynomial with nonnegative coefficients, i.e., $c_\al \ge 0$ for all $\al$, where $x^\al := x_1^{\al_1}\cdots x_k^{\al_k}$ and $\al \in \mathbb{Z}_{\geq 0}^k$ is the degree vector. We denote by 
$M_k(f) := \{ \al \in \mathbb{Z}_{\geq 0}^k \, : \; c_\al >0\}$ the set of vectors, for which the corresponding monomial appears in $f(x_1,\ldots,x_k)$. For brevity, we will say ``the monomial $\al$ appears in $f$''. We denote by $N_k(f):= Conv(M_k(f))$ the convex hull of $M_k(f)$, this is the \defn{Newton polytope} of $f(x_1,\ldots,x_k)$. We can restate Definition~\ref{def:snp} as follows.

\begin{Def}[\cite{Yong}]
A polynomial $f$ has a \defn{saturated Newton polytope} (SNP) if $M_k(f) = N_k(f)$.
\end{Def}

In particular, a polynomial $f$ has an SNP if and only if the following condition holds: 
\begin{align}\label{eq:snp}\tag{SNP}
 \text{For every $k+1$-tuple of compositions $(\al^1,\ldots,\al^{k+1})$, such that $c_{\alpha^i}>0$, and } \\ \notag
 \text{ weights $t_i \in[0,1]$, such that $t_1+\cdots+t_{k+1}=1$ and $\gamma := \sum_{i=1}^{k+1} t_i \alpha^i \in \mathbb{Z}^k$, we have $c_\gamma >0$.}
\end{align} By Caratheodory's theorem, any point in the convex hull of $M_k(f)$ can be written as a convex combination of at most $k+1$ points in $k$-dimensional space. Therefore, to show $M_k(f)$ is convex, it suffices to verify that every convex combination of $k+1$ points in $M_k(f)$ remains within $M_k(f)$.

As noted in~\cite{Yong} many of the important symmetric polynomials have SNP. Since Kostka coefficients $K_{\la\mu}$ are positive if and only if $\la \succeq \mu$ in the dominance order, we get an immediate characterization of $M_k(s_\la)$ and the following important statement. 

\begin{prop}[\cite{Yong}]\label{prop:schur_polytope}
The Schur polynomial $s_\la(x_1,\ldots,x_k)$ has a saturated Newton polytope and 
$$M_k(s_\la) = conv\{ (\la_{\sigma_1},\ldots,\la_{\sigma_k} ) \text{ for all } \sigma\in S_k\}.$$
\end{prop}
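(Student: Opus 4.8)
\textbf{Proof proposal for Proposition~\ref{prop:schur_polytope}.}

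The plan is to prove both assertions simultaneously by identifying the exponent set $M_k(s_\la)$ exactly and checking it is its own convex hull. By the SSYT expansion~\eqref{eq:schur_ssyt}, $s_\la(x_1,\dots,x_k) = \sum_\al K_{\la\al} m_\al$, so the monomial $x^\al$ appears in $s_\la(x_1,\dots,x_k)$ if and only if $K_{\la,\al}>0$ for some rearrangement of $\al$ into a partition; since $K_{\la,\al} = K_{\la,sort(\al)}$, this happens if and only if $sort(\al) \preceq \la$ in dominance order (using the stated criterion $K_{\la,\beta}>0 \iff \la \succeq \beta$ for partitions $\beta$, and noting $|\al|=|\la|=n$ is forced). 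Thus
$$M_k(s_\la) = \{\al \in \mathbb{Z}_{\ge 0}^k : |\al| = n,\ sort(\al) \preceq \la\}.$$

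Next I would show this set equals $P := conv\{(\la_{\sigma_1},\dots,\la_{\sigma_k}) : \sigma \in S_k\}$, the permutohedron of $\la$ (padding $\la$ with zeros to length $k$ if $\ell(\la) < k$, or noting $M_k(s_\la) = \emptyset$ if $\ell(\la) > k$). This is a classical fact: the permutohedron of a vector $v$ consists exactly of the vectors $w$ with $|w| = |v|$ that are majorized by $v$ (Rado's theorem). One direction is straightforward: each vertex $(\la_{\sigma_1},\dots,\la_{\sigma_k})$ has $sort = \la$, so all vertices lie in $M_k(s_\la)$; and since the dominance/majorization condition together with the fixed coordinate sum defines a convex polytope (it is cut out by the linear inequalities $\sum_{i \in S}\al_i \le \la_1 + \cdots + \la_{|S|}$ over all subsets $S$, plus $\al_i \ge 0$ and $\sum \al_i = n$), we get $P \subseteq M_k(s_\la) = N_k(s_\la)$. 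The reverse inclusion $M_k(s_\la) \subseteq P$ follows because any integer point with $sort(\al) \preceq \la$ lies in the permutohedron: one can exhibit it as a convex combination of permutations of $\la$ by an explicit ``sorting/transfer'' argument (repeatedly moving weight from a large coordinate to a small one along edges of the permutohedron), or cite Rado's characterization directly. Combining, $M_k(s_\la) = P = N_k(s_\la)$, which is both the SNP property and the claimed description.

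The only mild subtlety — and the step I would be most careful about — is the bookkeeping around the length of $\la$ versus $k$: when $k > \ell(\la)$ one must treat $\la$ as $(\la_1,\dots,\la_{\ell(\la)},0,\dots,0)$ so that the permutations $\sigma \in S_k$ genuinely produce all coordinate rearrangements (including those with zeros interspersed), and when $k < \ell(\la)$ one checks $M_k(s_\la) = \emptyset$ consistently with the convention. Everything else is either the definition~\eqref{eq:schur_ssyt}, the already-cited equivalence $K_{\la,\beta}>0 \iff \la \succeq \beta$, or the standard permutohedron/majorization fact, for which I would give a short self-contained argument or a reference rather than grinding through it.
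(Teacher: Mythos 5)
Your argument is correct and is essentially the paper's own (cited) justification: the Kostka positivity criterion $K_{\la,\beta}>0 \iff \la \succeq \beta$ identifies $M_k(s_\la)$ with the integer points majorized by $\la$, and Rado's permutohedron characterization turns this into the stated convex-hull description, giving SNP. The length bookkeeping you flag ($k$ versus $\ell(\la)$, padding with zeros) is the same convention implicitly used in the paper, so there is no gap.
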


\section{Two and three-row partitions}\label{s:2-3-row}
Here we deduce the SNP property for certain cases from existing formulas. In the cases treated here we will see that 
there will be a unique partition $\la$, such that $g(\mu,\nu,\la)>0$ and if $g(\mu,\nu,\al)>0$ then $\la \succ \al$ and so $s_\mu \ast s_\nu$ will contain all monomials $\al \prec \la$, as observed in~\cite{Yong}.

\begin{lemma}\label{lem:unique_dom}
Suppose that $\mu\vdash n$ and $\nu \vdash n$ are such that there is a partition $\la$ with $g(\mu,\nu,\la) >0$ and for every $\tau$, such that $g(\mu,\nu,\tau)>0$ we have $\tau \prec \la$ in the dominance order. Then $M_k(s_\mu \ast s_\nu) = M_k(s_\la)=N_k(s_\la)$ for all $k$ and $s_\mu*s_\nu$ has a saturated Newton polytope.
\end{lemma}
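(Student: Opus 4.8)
The plan is to show the two set equalities $M_k(s_\mu \ast s_\nu) = M_k(s_\la)$ and $M_k(s_\la) = N_k(s_\la)$ separately, and then conclude the SNP property. The second equality is immediate from Proposition~\ref{prop:schur_polytope}, which states that each Schur polynomial has SNP; so the entire content is the first equality. To prove $M_k(s_\mu \ast s_\nu) = M_k(s_\la)$, I would unwind the monomial expansion \eqref{eq:kron_mon}: the coefficient of $m_\al$ in $s_\mu \ast s_\nu$ is $\sum_\nu g(\mu,\nu,\tau) K_{\tau\al}$ (summing over partitions $\tau$; I relabel to avoid clashing with the $\nu$ in the statement — call it $\sum_\tau g(\mu,\nu,\tau)K_{\tau,\al}$), and all terms in this sum are nonnegative. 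Hence the monomial $\al$ appears in $s_\mu\ast s_\nu$ if and only if there is some partition $\tau$ with $g(\mu,\nu,\tau)>0$ and $K_{\tau,\al}>0$, i.e. $\tau \succeq \sort(\al)$.

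First I would prove the inclusion $M_k(s_\mu\ast s_\nu) \subseteq M_k(s_\la)$. If $\al \in M_k(s_\mu\ast s_\nu)$, then by the above there is a $\tau$ with $g(\mu,\nu,\tau)>0$ and $\sort(\al) \preceq \tau$. By the hypothesis of the lemma, $\tau \preceq \la$, so by transitivity of the dominance order $\sort(\al)\preceq \la$, which gives $K_{\la,\al} = K_{\la,\sort(\al)} > 0$, i.e. $\al \in M_k(s_\la)$. Conversely, for $M_k(s_\la)\subseteq M_k(s_\mu\ast s_\nu)$: if $\al\in M_k(s_\la)$ then $\sort(\al)\preceq \la$, so $K_{\la,\al}>0$; since $g(\mu,\nu,\la)>0$ by hypothesis, the term $g(\mu,\nu,\la)K_{\la,\al}$ is a strictly positive summand in the coefficient of $m_\al$, and as all summands are nonnegative the coefficient is positive, so $\al\in M_k(s_\mu\ast s_\nu)$. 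This establishes $M_k(s_\mu\ast s_\nu) = M_k(s_\la)$.

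Finally, combining with Proposition~\ref{prop:schur_polytope} gives $M_k(s_\mu\ast s_\nu) = M_k(s_\la) = N_k(s_\la)$. It remains to observe $N_k(s_\la) = N_k(s_\mu\ast s_\nu)$, which is immediate since the Newton polytope is the convex hull of the monomial support and the supports agree. Hence $M_k(s_\mu\ast s_\nu) = N_k(s_\mu\ast s_\nu)$ for every $k$, which is exactly the SNP property for $s_\mu\ast s_\nu$.

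There is essentially no hard step here — the argument is a short chain of implications built on the nonnegativity of Kostka numbers and Kronecker coefficients plus the dominance-order criterion $K_{\tau,\al}>0 \iff \tau\succeq\sort(\al)$. The only point requiring a little care is the bookkeeping between a composition $\al$ and its sorted partition $\sort(\al)$, using $K_{\tau,\al}=K_{\tau,\sort(\al)}$, and making sure the relabeling of the summation index does not collide with the partition names $\mu,\nu,\la$ fixed in the statement. The real work of the paper lies in verifying the hypothesis of this lemma (existence of a dominant $\la$) in the cases of Theorem~\ref{thm:2row} and part of Theorem~\ref{thm:main}, not in the lemma itself.
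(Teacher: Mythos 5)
Your proposal is correct and follows essentially the same argument as the paper: use the monomial expansion and nonnegativity to show every monomial of $s_\mu\ast s_\nu$ satisfies $\sort(\al)\preceq\tau\preceq\la$ for some $\tau$ with $g(\mu,\nu,\tau)>0$, and get the reverse inclusion from the fact that $s_\la$ itself appears in the product, then conclude via the SNP property of $s_\la$ from Proposition~\ref{prop:schur_polytope}. The only differences are cosmetic (your explicit $\sort(\al)$ bookkeeping and the relabeling of the summation index), so there is nothing to add.
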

\begin{proof}
Let $\al$ be such that $m_\al$ appears in $s_\mu \ast s_\nu$. By equation~\eqref{eq:kron_mon} we have that there must be a $\tau$, such that $g(\mu,\nu,\tau) K_{\tau, \al}>0$, so $K_{\tau, \al}>0$ and $\al \prec \tau$. Also, $g(\mu,\nu,\tau)>0$, thus by the hypothesis of this lemma, $\tau \prec \la$, so we get $\al \prec \tau \prec \la$. Thus $K_{\la,\al}>0$ and $m_\al$ appears in $s_\la$. The other direction follows since $s_\la$ is itself a term in the Kronecker product. 
\end{proof}

First, let $\ell(\nu)=\ell(\mu) =2$ and let the number of variables be arbitrary. Here we will show that 
$N_k(s_\nu*s_\mu) = N_k(s_\la)$ for a certain partition $\la$ for all $k$.

In~\cite{rosas}, Rosas computed the Kronecker product of Schur functions corresponding to two-row partitions. In particular, she showed that
\begin{prop}[{\cite[Corollary 5]{rosas}}]\label{cor:kron-coeff-two-row} Let $\al = (\al_1, \al_2), \be = (\be_1,\be_2)$, and $\ga = (\ga_1,\ga_2)$ be partitions of $n$. Assume that $\ga_2 \le \be_2\le \al_2$. Then $$g(\al,\be,\ga) = \max\{ y-x,0\},$$ where $x = \max\left(0,\left\lceil \frac{\be_2+\ga_2+\al_2-n}{2}\right\rceil\right)$ and $y = \left\lceil\frac{\be_2+\ga_2-\al_2+1}{2} \right\rceil$.
\end{prop}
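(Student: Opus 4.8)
The plan is to compute the symmetric function $s_\al\ast s_\be$ explicitly as a signed sum of products of homogeneous symmetric functions, and then read off the coefficient of $s_\ga$ using the two--row Jacobi--Trudi identity $s_{(p,q)}=h_ph_q-h_{p+1}h_{q-1}$ together with the Kostka pairing $\langle h_\mu, s_\nu\rangle = K_{\nu\mu}$. Since $g(\al,\be,\ga)$ is invariant under permuting its three arguments, the hypothesis $\ga_2\le\be_2\le\al_2$ merely fixes the ordering of the three ``second parts''; set $a=\al_2,\ b=\be_2,\ c=\ga_2$, so $a\ge b\ge c$, and I will use this ordering only at the very end to trim the case analysis.

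\textbf{Orbit decomposition and Jacobi--Trudi.} The key input is a permutation--module identity. Let $M^{(n-k,k)}=\mathrm{Ind}_{S_{n-k}\times S_k}^{S_n}\mathbf 1$ be the permutation module on $k$--subsets of $[n]$, with $\ch(M^{(n-k,k)})=h_{n-k}h_k$. The diagonal $S_n$--action on pairs $(A,B)$, $A$ an $a$--subset and $B$ a $b$--subset, has orbits indexed by the intersection size $j=|A\cap B|$ (with $\max(0,a+b-n)\le j\le\min(a,b)$), and the stabilizer of such a pair is $S_{a-j}\times S_j\times S_{b-j}\times S_{n-a-b+j}$. Hence
$$
(h_{n-a}h_a)\ast(h_{n-b}h_b)=\sum_{j=\max(0,a+b-n)}^{\min(a,b)} h_{a-j}\,h_j\,h_{b-j}\,h_{n-a-b+j}.
$$
Writing $s_\al=h_{n-a}h_a-h_{n-a+1}h_{a-1}$ and $s_\be=h_{n-b}h_b-h_{n-b+1}h_{b-1}$ and using bilinearity of $\ast$, this expresses $s_\al\ast s_\be$ as an inclusion--exclusion of four such sums, with parameters $(a,b),(a,b-1),(a-1,b),(a-1,b-1)$.

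\textbf{Extracting the coefficient of $s_\ga$.} Now $g(\al,\be,\ga)=\langle s_\al\ast s_\be,\ s_\ga\rangle$, and using $s_\ga=h_{n-c}h_c-h_{n-c+1}h_{c-1}$ this becomes an alternating sum, over the four outer $j$--intervals and the two inner signs, of pairings $\langle h_{\mu_1}h_{\mu_2}h_{\mu_3}h_{\mu_4},\ h_ph_q\rangle$. Each such pairing equals the number of nonnegative integer $2\times4$ matrices with row sums $(p,q)$ and column sums $(\mu_1,\mu_2,\mu_3,\mu_4)$, i.e.\ the number of $(e_1,\dots,e_4)$ with $0\le e_i\le\mu_i$ and $\sum e_i=q$; by inclusion--exclusion on the upper bounds $e_i\le\mu_i$ this is an alternating sum of binomials $\binom{q-\sum_{i\in S}(\mu_i+1)+3}{3}$ over $S\subseteq\{1,2,3,4\}$. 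Summing over the admissible $j$ turns each contribution into a piecewise--polynomial (quartic) function of $n,a,b,c$, so $g(\al,\be,\ga)$ becomes one fully explicit signed sum of such quantities.

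\textbf{The main obstacle.} The real content is the final collapse of this signed sum to $\max\{y-x,0\}$ with $x,y$ exactly the stated ceiling expressions. Three features make it delicate: (i) the four outer $j$--intervals have slightly different endpoints, so their boundary terms must be tracked and shown to cancel; (ii) the inner composition counts are ``clipped'' --- the binomials $\binom{\cdot}{3}$ vanish once the argument drops below $3$, which is the source of the outer $\max\{\,\cdot\,,0\}$ and, in the regime where $\al_2$ dwarfs $\be_2+\ga_2$, of the vanishing of $g$; and (iii) the endpoints $\big\lceil\tfrac{\be_2+\ga_2\pm\al_2+\cdots}{2}\big\rceil$ arise from the middle of a $j$--summation and carry a parity dependence on $a+b+c$ that must be handled uniformly. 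I would organize this by using $a\ge b\ge c$ and the trivial size bounds to reduce to a bounded list of ``regimes'' according to how $n$ compares to $a+b$, $a+b+c$, etc., verify the identity as a polynomial identity in the interior of each regime, and then check the finitely many boundary cases by hand; an alternative is to package the whole computation as a single rational generating function in three variables recording $(a,b,c)$ for fixed $n$ and extract the coefficient. The representation--theoretic setup above is routine; it is this bookkeeping that carries the proposition.
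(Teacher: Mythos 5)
The statement you are proving is not proved in the paper at all: it is quoted verbatim from Rosas \cite[Corollary 5]{rosas}, where it is obtained as a specialization of her general two-row formula (the lattice-point counting functions $\sigma$ and $\phi$ that the paper records as Theorem~\ref{thm:kron-coeff}). So there is no internal argument to compare against; your proposal has to stand on its own as a proof of the cited result.

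As it stands it does not. Your setup is fine and is essentially the classical route: the orbit decomposition $(h_{n-a}h_a)\ast(h_{n-b}h_b)=\sum_j h_{a-j}h_jh_{b-j}h_{n-a-b+j}$ is correct, bilinearity of $\ast$ with the two-row Jacobi--Trudi identity is correct, and the pairing $\langle h_\mu,h_ph_q\rangle$ as a bounded-composition count is correct. But the proposition's entire content is the identification of the resulting eight-fold signed sum of clipped binomial counts with $\max\{y-x,0\}$ for the \emph{exact} quantities $x=\max\bigl(0,\lceil\tfrac{\be_2+\ga_2+\al_2-n}{2}\rceil\bigr)$ and $y=\lceil\tfrac{\be_2+\ga_2-\al_2+1}{2}\rceil$, and you never carry this out: you name the difficulties (mismatched $j$-ranges, clipping at zero, the parity of $a+b+c$ producing the ceilings) and then say how you \emph{would} organize the verification (regimes, boundary cases, or a generating function), without executing any of it. Since the summands are piecewise polynomial of degree up to three or four in the parameters while the answer is piecewise linear, a massive and delicate cancellation is being asserted, not proved; the precise half-integer breakpoints and the parity-dependent ceilings are exactly where such a computation can silently go wrong. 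To close the gap you would either have to perform the case analysis in full (which is, in effect, redoing Rosas's computation of $\phi$ and $\sigma$ and then her Corollary~5), or simply cite Rosas as the paper does.
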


\begin{lemma}\label{lem:maximal-two-row} Let $\lambda = (\lambda_1, \lambda_2)$, $\mu = (\mu_1,\mu_2)$, and $\nu = (\nu_1,\nu_2)$ be two-row partitions of $n$. Without loss of generality, suppose that $\mu_2 \ge \nu_2$. Then $\langle s_{\mu}\ast s_{\nu}, h_{\lambda}\rangle > 0$ if and only if $\lambda_2 \ge \mu_2-\nu_2 .$
\end{lemma}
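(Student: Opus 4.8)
The plan is to reduce the inner product to a statement about a single Kronecker coefficient via the dominance order, and then read off the answer from Rosas' formula (Proposition~\ref{cor:kron-coeff-two-row}).

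\textbf{Step 1: reduce to two-row $\tau$.} Expanding in the Schur basis and using $\langle s_\tau, h_\lambda\rangle = K_{\tau,\lambda}$, we have $\langle s_\mu \ast s_\nu, h_\lambda\rangle = \sum_{\tau \vdash n} g(\mu,\nu,\tau)\, K_{\tau,\lambda}$. The key observation is that since $\ell(\lambda)\le 2$ and $|\tau| = |\lambda| = n$, the relation $\tau \succeq \lambda$ (equivalent to $K_{\tau,\lambda}>0$) forces $\tau_1 + \tau_2 \ge \lambda_1 + \lambda_2 = n$, hence $\tau$ has at most two parts and the only remaining constraint is $\tau_1 \ge \lambda_1$, i.e.\ $\tau_2 \le \lambda_2$. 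Conversely, every two-row partition $\tau\vdash n$ with $\tau_2 \le \lambda_2$ has $K_{\tau,\lambda}\ge 1$. Therefore $\langle s_\mu \ast s_\nu, h_\lambda\rangle > 0$ if and only if there is a two-row partition $\tau\vdash n$ with $\tau_2 \le \lambda_2$ and $g(\mu,\nu,\tau) > 0$.

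\textbf{Step 2: the admissible values of $\tau_2$.} I would then prove, for two-row partitions $\mu,\nu,\tau$ of $n$ with $\mu_2 \ge \nu_2$, two facts: (a) $g(\mu,\nu,\tau) > 0$ implies $\tau_2 \ge \mu_2 - \nu_2$; and (b) the partition $\tau^\circ := (n-(\mu_2-\nu_2),\,\mu_2-\nu_2)$ — which is a genuine partition since $\mu_2 - \nu_2 \le \mu_2 \le \lfloor n/2\rfloor$ — satisfies $g(\mu,\nu,\tau^\circ) = 1$. For (a), sort $\{\mu_2,\nu_2,\tau_2\}$ weakly increasingly and apply Proposition~\ref{cor:kron-coeff-two-row}: from $g>0$ one gets $y\ge 1$, where $y = \lceil \tfrac{(\text{sum of the two smaller second parts}) - (\text{largest}) + 1}{2}\rceil$, so the sum of the two smaller second parts is $\ge$ the largest; in the orderings $\tau_2\le\nu_2\le\mu_2$ and $\nu_2\le\tau_2\le\mu_2$ this rearranges exactly to $\tau_2 \ge \mu_2-\nu_2$, while in the ordering $\nu_2\le\mu_2<\tau_2$ the conclusion $\tau_2 \ge \mu_2-\nu_2$ is immediate since $\tau_2 > \mu_2 \ge \mu_2-\nu_2$. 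For (b), taking $\tau^\circ$ in the role of $\gamma$, the quantity $x = \max\{0,\lceil\tfrac{2\mu_2 - n}{2}\rceil\}$ equals $0$ because $\mu_2 \le \lfloor n/2 \rfloor$, and $y = \lceil \tfrac12\rceil = 1$, so $g = y - x = 1$.

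\textbf{Step 3: conclude; and the main obstacle.} If $\lambda_2 \ge \mu_2 - \nu_2$, then $\tau^\circ$ from (b) has $(\tau^\circ)_2 = \mu_2 - \nu_2 \le \lambda_2$ and $g(\mu,\nu,\tau^\circ) > 0$, so by Step 1 the inner product is positive; conversely, if the inner product is positive, Step 1 yields a two-row $\tau \vdash n$ with $\tau_2 \le \lambda_2$ and $g(\mu,\nu,\tau) > 0$, whence $\lambda_2 \ge \tau_2 \ge \mu_2 - \nu_2$ by (a). The only place that requires care is Step 2(a): Proposition~\ref{cor:kron-coeff-two-row} is stated with the three partitions listed in a prescribed order of their second parts, so one must run through the relative orderings of $\mu_2,\nu_2,\tau_2$ compatible with $\mu_2\ge\nu_2$ and check that the ceilings and the truncation $\max\{0,\cdot\}$ behave as claimed using only $\mu_2,\nu_2\le\lfloor n/2\rfloor$; everything else is routine.
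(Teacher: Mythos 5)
Your proof is correct and takes essentially the same route as the paper: expand $\langle s_\mu \ast s_\nu, h_\lambda\rangle = \sum_\tau g(\mu,\nu,\tau)K_{\tau\lambda}$, use dominance/Kostka positivity to reduce to two-row $\tau$ with $\tau_2\le\lambda_2$, and apply Rosas' formula (Proposition~\ref{cor:kron-coeff-two-row}) both to exhibit the witness $\tau^\circ$ with $\tau^\circ_2=\mu_2-\nu_2$ and $g(\mu,\nu,\tau^\circ)=1$ (the paper's boundary computation) and to show $g(\mu,\nu,\tau)>0$ forces $\tau_2\ge\mu_2-\nu_2$ (the paper's ``opposite direction''). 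Your single witness $\tau^\circ$ even streamlines the paper's case split $\lambda_2\le\mu_2$ versus $\lambda_2>\mu_2$; the only bookkeeping point is that when $\mu_2-\nu_2>\nu_2$ the partition $\nu$ must play the role of $\gamma$ in Proposition~\ref{cor:kron-coeff-two-row}, which changes nothing since $x$ and $y$ depend only on $\beta_2+\gamma_2$ and on the largest second part $\mu_2$.
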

By equation~\eqref{eq:kron_mon} this means that $m_\la$ appears with a nonzero coefficient in that Kronecker product. 
\begin{proof}

First, suppose that $\la_2 \geq \mu_2 - \nu_2$. We consider two cases.

 Case 1: $\lambda_2 \le \mu_2$. Then by Proposition~\ref{cor:kron-coeff-two-row}, applied with $\ga =\la, \be = \nu, \al=\mu$ we have that $y=\lceil \frac{\la_2 + \nu_2-\mu_2+1}{2} \rceil$, so  $g(\mu, \nu, \lambda) = 0$ whenever $\nu_2+\lambda_2-\mu_2 <0$. Also $g(\mu, \nu, \lambda) = 1$ when $\nu_2+\lambda_2-\mu_2 =0$ since $y=1$ and $\la_2+\mu_2+\nu_2 -n  = \la_2+\nu_2 -\mu_1 \leq \la_2+\nu_2 -\mu_2 =0$ so $x=0$. It follows that $\langle s_{\mu}\ast s_{\nu}, h_{\lambda}\rangle \geq \langle s_\mu \ast s_\nu, s_\la \rangle >0$ if $ \mu_2-\nu_2 \le \lambda_2 \le \mu_2.$

Case 2: $\lambda_2 > \mu_2$. Then $\mu_1 \geq \la_1$ and  $\mu \succ \lambda$. Applying Proposition~\ref{cor:kron-coeff-two-row} with $\ga=\nu, \be=\al=\mu$ we have $y =\lceil \frac{ \nu_2+1}{2} \rceil \geq  \lceil \frac{ \nu_2+\mu_2 - \mu_1}{2} \rceil=x$ with equality if and only if $\mu_2=\mu_1$ and $\nu_2$ is odd. However this case is impossible, since $n/2 \geq \la_2 >\mu_2$, we have $\mu_2<\mu_1$, so $g(\mu, \nu, \mu) >0$. As above, $\langle s_{\mu}\ast s_{\nu}, s_{\mu}\rangle > 0$ and since $K_{\mu,\la}>0$, then the monomial $m_\la$ appears.

In the opposite direction, consider $\lambda_2 < \mu_2-\nu_2.$ Then, $\lambda_2 < \mu_2$, and $\lambda_2+\nu_2-\mu_2 <0$, which implies that $g(\lambda,\mu,\nu) = 0$ by Corollary 2. Thus, $\langle s_{\mu}\ast s_{\nu}, s_{\lambda}\rangle = 0$ for all $\la$ with $\lambda_2 < \mu_2-\nu_2$. If $\langle s_\nu \ast s_\mu, h_\la \rangle >0$ for some $\la$ with $\la_2 < \mu_2-\nu_2$ then there must exist some partition $\theta \succ \la$, such that $ \langle s_\nu \ast s_\mu, s_\theta \rangle >0$. But as $\theta \succ \la$ we would have $\theta_2 \leq \la_2 <\mu_2 -\nu_2$, and thus reach a contradiction. 
\end{proof}

We now move to a more general case and invoke Theorem 4 from~\cite{rosas}. Adopting the notation from~\cite{rosas}, we treat the coordinate axes as if working with matrices, where the first entry corresponds to $(0, 0)$. Specifically, the point $(i, j)$ refers to  $i$-th row and $j$-th column.

Let $k$ and $l$ be positive integers representing the length and height of a rectangle, respectively. Define the function $\sigma_{k, l}:\mathbb{N} \to \mathbb{N}$, where $\sigma_{k, l}(h)$ counts the number of points in $\mathbb{N}^2$ inside a rectangle of dimensions $k\times l$ that can be reached from $(0,h)$ by moving any number of steps southwest or northwest. As shown in~\cite[Lemma 1]{rosas}, the function satisfies the following recursive definition:
\begin{equation}\label{sigma_eqn}\sigma_{k, l}(h)= \begin{cases}0, &  h<0 \\ \left\lfloor\left(\frac{h}{2}+1\right)^2\right\rfloor, & 0 \leq h<\min (k, l) \\ \sigma_{k, l}(s)+\left(\frac{h-s}{2}\right) \min (k, l), & \min (k, l) \leq h<\max (k, l) \\ \left\lceil\frac{k l}{2}\right\rceil-\sigma_{k, l}(k+l-h-4), &  h \text { is even and } \max (k, l) \leq h \\ \left\lfloor\frac{k l}{2}\right\rfloor-\sigma_{k, l}(k+l-h-4), & h \text { is odd and } \max (k, l) \leq h\end{cases},\end{equation}
where \[s = \begin{cases}
\min(k, l) - 2 & \text{if } h - \min(k, l) \text{ is even}, \\
\min(k, l) - 1 & \text{otherwise}.
\end{cases}\]

Let $R$ be the rectangle with vertices $(a, c),
(a+b,c), (a,c+d),$ and $(a+b,c+d)$. The function $\phi(a, b, c, d)(x, y)$ counts the number of points in $\mathbb{N}^2$ inside $R$ that can be reached from $(x,y)$ by moving any number of steps southwest or northwest. Rosas provided a concise form for this function as follows: 
\begin{equation}\label{phi_eqn}
    \phi(a, b, c, d)(x, y)= \begin{cases}\sigma_{b+1, d+1}(x+y-a-c), & 0 \leq y \leq c \\ \sigma_{b+1, y-c+1}(x-a)+\sigma_{b+1, c+d-y+1}(x-a)-\delta, & c<y<c+d \\ \sigma_{b+1, d+1}(x-y+c+d-a), & c+d \leq y\end{cases}
\end{equation}

where the auxiliary variable $\delta$ is defined as follows:
\begin{itemize}
    \item If $x<a$, then $\delta=0$;
    \item If $a \leq x \leq a+b$, then $\delta=\left\lceil\frac{x-a+1}{2}\right\rceil$;
    \item If $x>a+b$ then we consider two cases: If $x-a-b$ is even then $\delta=\left\lceil\frac{b+1}{2}\right\rceil$; otherwise, $\delta=\left\lfloor\frac{b+1}{2}\right\rfloor$.
\end{itemize}
Then there is the following formula for Kronecker coefficients of 2 two-row partitions.

\begin{thm}[{\cite[Theorem 4]{rosas}}]\label{thm:kron-coeff}
Let $\be, \ga$, and $\al$ be partitions of $n$, where $\be=\left(\be_{1}, \be_{2}\right)$ and $\ga=\left(\ga_{1}, \ga_{2}\right)$ are two two-row partitions and let $\al=\left(\al_{1}, \al_{2}, \al_{3}, \al_{4}\right)$ be a partition of length less than or equal to 4. Assume that $\ga_{2} \leq \be_{2}$. Then
$$
g(\be,\ga ,\al)=(\phi(a, b, a+b+1, c)-\phi(a, b, a+b+c+d+2, c))\left(\ga_{2}, \be_{2}+1\right),
$$
where $a=\al_{3}+\al_{4}, b=\al_{2}-\al_{3}, c=\min \left(\al_{1}-\al_{2}, \al_{3}-\al_{4}\right) \text { and } d=\left|\al_{1}+\al_{4}-\al_{2}-\al_{3}\right|$.
\end{thm}

\begin{ex} Consider $\be = (7,6)$, $\ga = (8,5)$ and $\al = (5, 4, 3, 1)$. 

By Theorem \ref{thm:kron-coeff}, $a=\al_{3}+\al_{4} = 4, b=\al_{2}-\al_{3}=1, c=\min \left(\al_{1}-\al_{2}, \al_{3}-\al_{4}\right)=1$, $ d=\left|\al_{1}+\al_{4}-\al_{2}-\al_{3}\right|=1$, and  hence, $g(\be,\ga,\al) = (\phi(4, 1, 6, 1)-\phi(4, 1, 9, 1))\left(5, 7\right).$

By equation \eqref{phi_eqn}, we have \[\phi(4, 1, 6, 1)(5,7) = \sigma_{2,2}(5-7+6+1-4) = \sigma_{2,2}(1)\] and \[\phi(4, 1, 9, 1)(5,7) =  \sigma_{2,2}(5+7-4-9) = \sigma_{2,2}(-1).\] Applying equation \eqref{sigma_eqn}, we have $\sigma_{2,2}(-1) = 0$ and $\sigma_{2,2}(1) = \left\lfloor\left(\frac{1}{2}+1\right)^2\right\rfloor = 2.$ Therefore, $g(\be,\ga,\al) = 2-0 = 2.$

\end{ex}

\begin{prop}\label{prop:2-3-row1}
Let $\mu$ and $\nu$ be partitions of $n$, where $\mu = (\mu_1,\mu_2)$ and $\nu = (\nu_1,\nu_2,\nu_3)$, such that $\mu_1 \le \nu_1$. Then the Kronecker product $s_\mu \ast s_\nu$ has a saturated Newton polytope.
\end{prop}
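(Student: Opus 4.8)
The plan is to show that, under the hypothesis $\mu_1 \le \nu_1$, there is a unique dominance-maximal partition $\la$ appearing in $s_\mu \ast s_\nu$, and then invoke Lemma~\ref{lem:unique_dom}. The natural candidate for the maximal term is $\la = \mu + (\nu_2+\nu_3, \ldots)$ arranged appropriately — more precisely, a partition of the form $\la = (\mu_1 + \nu_1 - n + \text{(something)}, \ldots)$; the cleanest guess is that the maximal $\nu$-part structure is governed by how much of $\mu$ can be ``spread out''. Concretely, I expect the maximal term to be $\la$ with $\la_1$ as large as possible, namely $\la_1 = \min\{\mu_1,\nu_1\} = \mu_1$ is too naive; rather one should look for the componentwise-smallest tail. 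Since $\ell(\mu)=2$ and $\ell(\nu)=3$, any $\al$ with $g(\mu,\nu,\al)>0$ has $\ell(\al)\le \ell(\mu)\ell(\nu) = 6$, but in fact $\ell(\al) \le \min\{\ell(\mu)\ell(\nu),\ldots\}$ and more refined bounds apply; the key structural input is Theorem~\ref{thm:kron-coeff} (Rosas's formula), which restricts $\al$ to length $\le 4$ when one of the partitions has two rows — wait, here $\mu$ has two rows and $\nu$ has three, so I would instead use the $*$-symmetry $g(\mu,\nu,\al) = g(\mu,\al,\nu)$ together with Theorem~\ref{thm:kron-coeff} applied with the two two-row partitions being $\mu$ and $\al$ (forcing $\ell(\al)\le 2$ is false; instead the roles are: $\be,\ga$ two-row, $\al$ arbitrary length $\le 4$). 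So I take $\be = \mu$, and either $\ga$ or $\al$ to be the three-row $\nu$: set $\ga$ to be a two-row partition and $\al = \nu$ has length $3 \le 4$, so $g(\mu,\ga,\nu)$ is computed by the $\phi$-formula as a function of $\ga$.

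The key steps, in order: (1) Using $g(\mu,\nu,\al) = g(\mu,\al,\nu)$ and Theorem~\ref{thm:kron-coeff} with $\be=\mu$ (two rows), $\ga = \al$ restricted to be a two-row partition, and the fixed partition $\nu$ (length $\le 4$), extract exactly which two-row $\al$ satisfy $g(\mu,\alpha,\nu) > 0$; this should reduce to an explicit inequality on $\al_2$ analogous to Lemma~\ref{lem:maximal-two-row}, of the form $\al_2 \ge f(\mu,\nu)$ for an explicit $f$, using the monotonicity properties of $\sigma_{k,l}$ and $\phi$ built into \eqref{sigma_eqn} and \eqref{phi_eqn}. (2) Observe that among all partitions $\al$ (not just two-row) with $g(\mu,\nu,\al)>0$, the dominance-maximal one is forced to be two-row: if $\al$ has $\ell(\al)\ge 3$ then $\al^{\flat} := (\al_1, \al_2 + \al_3 + \cdots)$ dominates $\al$, and I must show $g(\mu,\nu,\al^{\flat})>0$ as well — this uses the hypothesis $\mu_1 \le \nu_1$ crucially, since it guarantees enough ``room'' in the first row of $\nu$ for the collapsed tableaux/LR-data to still be valid (equivalently, that the relevant $\phi$-value is nondecreasing under this collapse). (3) Identify the unique maximal two-row $\la$ (the one with smallest possible $\la_2$ subject to $g(\mu,\la,\nu)>0$, equivalently smallest $\la_2 = f(\mu,\nu)$), check $g(\mu,\nu,\la)>0$, and check every other $\al$ with $g>0$ satisfies $\al \prec \la$: indeed $\al \preceq \al^\flat$ and $\al^\flat$ is a two-row partition with $g>0$ hence $\al^\flat_2 \ge \la_2$, giving $\al^\flat \preceq \la$ since both have two rows and the same size. (4) Apply Lemma~\ref{lem:unique_dom} to conclude $M_k(s_\mu \ast s_\nu) = M_k(s_\la) = N_k(s_\la)$ for all $k$, hence SNP.

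The main obstacle I anticipate is step (2): proving that the dominance-maximal term can always be taken with at most two rows, which amounts to a monotonicity statement for the Rosas $\phi$-function under collapsing the tail of the third partition, and it is exactly here that the hypothesis $\mu_1 \le \nu_1$ must be used — without it the maximal term genuinely can have three rows (this is the phenomenon flagged in the introduction as the first case where no dominant partition exists, handled separately in Theorem~\ref{thm:main}). A secondary technical nuisance is that Theorem~\ref{thm:kron-coeff} is stated with the hypothesis $\ga_2 \le \be_2$, so I will need to split into cases according to whether $\al_2 \le \mu_2$ or $\al_2 > \mu_2$ (as in the proof of Lemma~\ref{lem:maximal-two-row}), and in the latter regime argue directly that $\mu \succ \al$ so that the monomial $m_\al$ is already subsumed by the term $s_\mu$ — provided $g(\mu,\nu,\mu)>0$, which should again follow from $\mu_1 \le \nu_1$ via a short positivity argument (e.g. exhibiting an explicit nonzero Kronecker coefficient or using that $g(\mu,\nu,\mu) \ge g((\mu_1),(\nu_1),(\mu_1))\cdot(\cdots)$ by the semigroup property, peeling off first rows).
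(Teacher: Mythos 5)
Your high-level strategy (exhibit a unique dominance-maximal term and invoke Lemma~\ref{lem:unique_dom}) is the same as the paper's, but the two claims you lean on to carry it out are not correct, and one of them is the heart of the matter. The collapse claim in your step (2) --- that $g(\mu,\nu,\al)>0$ forces $g(\mu,\nu,\al^{\flat})>0$ with $\al^{\flat}=(\al_1,\al_2+\al_3+\cdots)$ --- is false even under the hypothesis $\mu_1\le\nu_1$: take $\mu=(2,2)$, $\nu=(2,1,1)$, where $s_{(2,2)}\ast s_{(2,1,1)}=s_{(3,1)}+s_{(2,1,1)}$, so $\al=(2,1,1)$ is positive but $\al^{\flat}=(2,2)$ is not. (Also note $\al^{\flat}$ need not even be a partition, e.g.\ $\al=(2,2,2)$.) Since your step (3) uses positivity of $\al^{\flat}$ to deduce $\al^{\flat}_2\ge\la_2$, the whole chain bounding $\al_1$ collapses, and that bound is exactly what the proof needs: for a two-row $\la$, the statement ``every positive $\al$ satisfies $\al\preceq\la$'' is equivalent to ``every positive $\al$ has $\al_1\le\la_1$.'' The same example also kills your fallback for the $\al_2>\mu_2$ regime: $g(\mu,\nu,\mu)=g((2,2),(2,1,1),(2,2))=0$ despite $\mu_1\le\nu_1$, and the proposed semigroup argument ``peeling off first rows'' does not parse as written, since the semigroup property requires each of the two triples to consist of partitions of a common size, whereas $(\mu_1)$, $(\nu_1)$, $(\mu_1)$ generally have different sizes.

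The missing ingredient, and what the paper uses instead of your step (2), is Dvir's theorem \cite{Dvi93}: $\max\{\la_1: g(\mu,\nu,\la)>0\}=|\mu\cap\nu|$, which here equals $\mu_1+\nu_2$ because $\mu_1\le\nu_1$ forces $\mu_2\ge\nu_2+\nu_3\ge\nu_2$. This immediately gives that every positive $\al$ satisfies $\al_1\le\mu_1+\nu_2$, so the only candidate for a dominance-maximal term is the two-row partition $\la=(\mu_1+\nu_2,\,n-\mu_1-\nu_2)$, and all that remains is to verify $g(\mu,\nu,\la)>0$. That single positivity check is done exactly along the lines of your step (1): apply Theorem~\ref{thm:kron-coeff} with the two two-row partitions $\mu$ and $\la$ and the length-$3$ partition $\nu$ in the $\al$-slot (the hypothesis $\ga_2\le\be_2$ is automatic since $\la_2=\mu_2-\nu_2\le\mu_2$), and the $\phi$/$\sigma$ evaluation gives $g(\mu,\nu,\la)=1$. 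So your step (1) is salvageable for the one partition that matters, but without Dvir's bound (or an equivalent substitute) your argument has no control over $\al_1$ for partitions of length $\ge 3$, and the monotonicity statement you hoped would supply it is simply not true.
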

\begin{proof}
We have that $\max\{\lambda_1 \mid g(\mu,\nu,\lambda) \neq 0\} = |\mu \cap \nu| = \mu_1+\nu_2$, a result due to~\cite{Dvi93}. Let $\lambda = (\mu_1+\nu_2,n-\mu_1-\nu_2).$ We apply Theorem \ref{thm:kron-coeff} with $\al=(\nu_1,\nu_2,\nu_3), \be= (\mu_1,\mu_2)$. Then $a = \nu_3, b= \nu_2-\nu_3, c = \min(\nu_1-\nu_2, \nu_3)=\nu_3, d = |\nu_1-\nu_2-\nu_3| = \nu_1-\nu_2-\nu_3$, and 
\[g(\mu,\nu,\lambda) = (\phi(\nu_3,\nu_2-\nu_3, \nu_2+1,\nu_3)-\phi(\nu_3,\nu_2-\nu_3,\nu_1+2,\nu_3)(\lambda_2,\mu_2+1).\]
Since $\la_2 \le \mu_2+1 \le \nu_1+1$, 
\begin{align*}
    \phi(\nu_3,\nu_2-\nu_3,\nu_1+2,\nu_3)(\lambda_2,\mu_2+1) &= \sigma_{\nu_2-\nu_3+1,\nu_3+1}(\lambda_2+\mu_2+1-\nu_3-\nu_1-2)\\
    &= \sigma_{\nu_2-\nu_3+1,\nu_3+1}(\mu_2-\mu_1-1)\\
    &= 0.
\end{align*}
It follows that \[g(\mu,\nu,\lambda) = \phi(\nu_3,\nu_2-\nu_3, \nu_2+1,\nu_3)(\lambda_2,\mu_2+1).\]

Since $\mu_2+1 \ge \nu_2+\nu_3+1$,
\begin{align*}
    \phi(\nu_3,\nu_2-\nu_3, \nu_2+1,\nu_3)(\lambda_2,\mu_2+1) &= \sigma_{\nu_2-\nu_3+1, \nu_3+1}(\lambda_2-\mu_2-1+\nu_2+1+\nu_3-\nu_3)\\
    &=\sigma_{\nu_2-\nu_3+1, \nu_3+1}(0) = 1.
\end{align*} Then, among all partitions $\lambda \vdash n$ such that $g(\mu,\nu,\lambda) > 0$, $(n-\mu_1-\nu_2, \mu_1+\nu_2)$ is the unique maximal in dominance order, and therefore, by Lemma~\ref{lem:unique_dom}, $s_\mu \ast s_\nu$ has a SNP.
\end{proof}

\begin{rem}
We cannot expect to have unique maximal terms in general. 
 For instance, $s_{(5,4,4)}*s_{(7,6)} = s_{(3, 2, 2, 2, 2, 2)} + s_{(3, 3, 2, 2, 2, 1)} + s_{(3, 3, 3, 2, 1, 1)} + s_{(3, 3, 3, 2, 2)} + s_{(3, 3, 3, 3, 1)} + s_{(4, 2, 2, 2, 2, 1)} + 2s_{(4, 3, 2, 2, 1, 1)} + 2s_{(4, 3, 2, 2, 2)} + s_{(4, 3, 3, 1, 1, 1)} + 3s_{(4, 3, 3, 2, 1)} + s_{(4, 3, 3, 3)} + s_{(4, 4, 2, 1, 1, 1)} + 2s_{(4, 4, 2, 2, 1)} + 2s_{(4, 4, 3, 1, 1)} + 2s_{(4, 4, 3, 2)} + s_{(4, 4, 4, 1)} + s_{(5, 2, 2, 2, 1, 1)} + s_{(5, 2, 2, 2, 2)} + 2s_{(5, 3, 2, 1, 1, 1)} + 4s_{(5, 3, 2, 2, 1)} + 3s_{(5, 3, 3, 1, 1)} + 2s_{(5, 3, 3, 2)} + s_{(5, 4, 1, 1, 1, 1)} + 3s_{(5, 4, 2, 1, 1)} + 3s_{(5, 4, 2, 2)} + 4s_{(5, 4, 3, 1)} + s_{(5, 4, 4)} + s_{(5, 5, 1, 1, 1)} + 2s_{(5, 5, 2, 1)} + s_{(5, 5, 3)} + s_{(6, 2, 2, 1, 1, 1)} + s_{(6, 2, 2, 2, 1)} + s_{(6, 3, 1, 1, 1, 1)} + 4s_{(6, 3, 2, 1, 1)} + 3s_{(6, 3, 2, 2)} + 3*s_{(6, 3, 3, 1)} + 2s_{(6, 4, 1, 1, 1)} + 5s_{(6, 4, 2, 1)} + 2s_{(6, 4, 3)} + 2s_{(6, 5, 1, 1)} + 2s_{(6, 5, 2)} + s_{(6, 6, 1)} + s_{(7, 2, 2, 1, 1)} + s_{(7, 2, 2, 2)} + 2s_{(7, 3, 1, 1, 1)} + 4s_{(7, 3, 2, 1)} + s_{(7, 3, 3)} + 3s_{(7, 4, 1, 1)} + 3s_{(7, 4, 2)} + 2s_{(7, 5, 1)} + s_{(7, 6)} + s_{(8, 2, 2, 1)} + 2s_{(8, 3, 1, 1)} + 2s_{(8, 3, 2)} + 2s_{(8, 4, 1)} + s_{(8, 5)} + s_{(9, 2, 2)} + s_{(9, 3, 1)}.$ In this product, $(8, 5)$ and $(9,3,1)$ are incomparable maximal elements with respect to the dominance order. 
 
 Also, 
$s_{(6,6)}\ast s_{(8,2,1,1)} = s_{(4, 4, 2, 1,1)} + s_{(4, 4, 3, 1)} + s_{(5, 3, 1, 1, 1, 1)} + s_{(5, 3, 2, 1, 1)} + s_{(5, 3, 2, 2)} + s_{(5, 3, 3, 1)} + 2s_{(5, 4, 1, 1, 1)} + 3s_{(5, 4, 2, 1)} + s_{(5, 4, 3)} + s_{(5, 5, 1, 1)} + 2s_{(5, 5, 2)} + s_{(6, 2, 2, 1, 1)} + 2s_{(6, 3, 1, 1, 1)} + 3s_{(6, 3, 2, 1)} + s_{(6, 3, 3)} + 4s_{(6, 4, 1, 1)} + 2s_{(6, 4, 2)} + 2s_{(6, 5, 1)} + s_{(7, 2, 1, 1, 1)} + s_{(7, 2, 2, 1)} + 2s_{(7, 3, 1, 1)} + 2s_{(7, 3, 2)} + 2s_{(7, 4, 1)} + s_{(7, 5)} + s_{(8, 2, 1, 1)} + s_{(8, 3, 1)}.$ In  this product, $(7,5)$ and $(8,3,1)$ are incomparable maximal.
\end{rem}

\section{Multi-LR coefficients and Horn inequalities}\label{s:LRs}

\subsection{Monomial expansion via multi-LR coefficients}
As we observed, the Kronecker product does not necessarily have a unique dominating term $s_\la$. Furthermore, there are no known positive formulas available for many other cases. We thus move directly towards the monomial expansion. The coefficient of $x^\bba$, where $\bba =(a_1,a_2,\ldots)$ in $s_\mu \ast s_\nu$ can be determined as follows. From equation~\eqref{eq:tripple_cauchy} we have that
$$s_\mu \ast s_\nu(x) = \sum_\al g(\la,\mu,\nu) s_\la(x) = \langle \prod \frac{1}{1-x_iy_jz_k}, s_\mu(y) s_\nu(z)\rangle$$
over the rings $\Lambda_y$ and $\Lambda_z$. We have that 
$$\prod \frac{1}{1- u_i v_j} = \sum_\al m_\al(u) h_\al(v),$$
so substituting this with $u=x$ and $v = [y\cdot z] = (y_1z_1, y_1z_2,\ldots)$ we get that the coefficient at $m_\bba(x)$ above is $\langle s_\mu(y) \ast s_\nu (z) , h_\bba[y\cdot z] \rangle$. Observe that $h_\al = h_{\al_1} h_{\al_2} \cdots $ and that 
$$h_m[yz] = s_{(m)}[yz] = \sum_{\al \vdash m} s_\al(y)s_\al(z)$$ 
from the fact that $g((m), \la,\mu)=\delta_{\la,\mu}$ as $\mathbb{S}_{(m)}$ is the trivial representation. Thus we can expand the above as
\begin{equation}\label{eq:mon_LR}
\langle s_\mu(y) \ast s_\nu (z) , h_\bba[y\cdot z] \rangle =  \langle s_\mu(y) \ast s_\nu (z), \prod_i \sum_{\al^i \vdash a_i} s_{\al^i}(y) s_{\al^i}(z) \rangle
= \sum_{ \al^i \vdash a_i, i=1,...} c^{\mu}_{\al^1\al^2\cdots} c^{\nu}_{\al^1\al^2 \cdots}
\end{equation}

We now define the following set of points given by the concatenation of the vectors $\al^1,\al^2,\ldots,\al^k$:
\begin{align}\label{eq:p_mu_a}
P(\mu; \bba):= \{ (\al^1, \al^2,\cdots,\al^k) \in \mathbb{Z}_{\geq 0}^{\ell(\mu)k} :  c^{\mu}_{\al^1\al^2\cdots}>0 \text{ and } |a^i|=a_i \text{ for all }i=1,\ldots,k\}.
\end{align}
 $P(\mu;\bba) \neq \emptyset$ for all $\mu, \bba$ of the same size. This can be seen either by a greedy algorithm to construct $\al^1,\ldots$ giving a nonzero multi-LR coefficient, or by observing that $s_\mu \ast s_\mu = s_{(n)} + \cdots$ and contains every monomial of degree $n$, so for every $\bba$ there are some $\al^i \vdash a_i$ with $c^{\mu}_{\al^1\cdots} >0$. 

The monomials appearing in $s_\mu \ast s_\nu$  correspond to $\bba$, for which there exist $\al^1,\cdots$ with $c^\mu_{\al^1\cdots}>0$ and $c^{\nu}_{\al^1\cdots}>0$. Thus
\begin{prop}\label{prop:kron_monomials}
The set of monomial degrees $\bba=(a_1,\ldots,a_k)$ appearing in $s_\mu \ast s_\nu$ is given as
$$M_k(s_\mu \ast s_\nu) = \{ \bba \in \mathbb{Z}^k_{\geq 0}: P(\mu;\bba) \cap P(\nu;\bba) \neq \emptyset\}.$$ 
\end{prop}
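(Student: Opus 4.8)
The plan is to deduce the statement directly from the monomial expansion already obtained in \eqref{eq:mon_LR}, using only that Littlewood--Richardson and multi-LR coefficients are nonnegative integers. Since $s_\mu \ast s_\nu$ is a symmetric polynomial, for a fixed $\bba=(a_1,\dots,a_k)\in\mathbb{Z}_{\geq 0}^k$ the coefficient of the monomial $x^\bba$ in $s_\mu\ast s_\nu(x_1,\dots,x_k)$ equals the coefficient of $m_\bba$ in the monomial expansion, which by \eqref{eq:kron_mon} is $\langle s_\mu\ast s_\nu, h_\bba\rangle$. The chain of identities preceding \eqref{eq:mon_LR} rewrites this inner product as $\langle s_\mu(y)\ast s_\nu(z), h_\bba[y\cdot z]\rangle$ and then evaluates it as
$$\sum_{\al^i\vdash a_i,\ i=1,\dots,k} c^{\mu}_{\al^1\al^2\cdots\al^k}\, c^{\nu}_{\al^1\al^2\cdots\al^k}.$$
So everything reduces to understanding when this sum is positive.

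Here is where nonnegativity does the work: each $c^{\mu}_{\al^1\cdots\al^k}$ is a sum of products of ordinary LR coefficients (the recursion defining the multi-LR coefficients), hence a nonnegative integer, and likewise for $c^{\nu}_{\al^1\cdots\al^k}$. Therefore the displayed sum is strictly positive if and only if at least one summand is, i.e. if and only if there exists a tuple of partitions $(\al^1,\dots,\al^k)$ with $\al^i\vdash a_i$ for every $i$ such that \emph{both} $c^{\mu}_{\al^1\cdots\al^k}>0$ and $c^{\nu}_{\al^1\cdots\al^k}>0$. By the definition \eqref{eq:p_mu_a}, the condition $c^{\mu}_{\al^1\cdots\al^k}>0$ together with $|\al^i|=a_i$ says exactly that $(\al^1,\dots,\al^k)\in P(\mu;\bba)$ (and positivity of the multi-LR coefficient already forces $\ell(\al^i)\le\ell(\mu)$, so the ambient constraint is automatic); similarly for $\nu$. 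Hence the required tuple exists precisely when $P(\mu;\bba)\cap P(\nu;\bba)\neq\emptyset$, and thus $\bba\in M_k(s_\mu\ast s_\nu)$ if and only if $P(\mu;\bba)\cap P(\nu;\bba)\neq\emptyset$, which is the claim.

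The only points requiring a word of care — neither of them a real obstacle — are the degenerate case and the ambient dimension. If $\sum_i a_i\neq n=|\mu|=|\nu|$, then $\bba\notin M_k(s_\mu\ast s_\nu)$ because the Kronecker product is homogeneous of degree $n$; correspondingly $P(\mu;\bba)=\emptyset$ since a nonzero $c^{\mu}_{\al^1\cdots\al^k}$ forces $\sum_i|\al^i|=|\mu|$, so both sides of the asserted equality agree vacuously, and we may assume $|\bba|=n$ henceforth. When $\ell(\mu)\neq\ell(\nu)$ the sets $P(\mu;\bba)$ and $P(\nu;\bba)$ a priori live in different coordinate spaces $\mathbb{Z}_{\geq 0}^{\ell(\mu)k}$ and $\mathbb{Z}_{\geq 0}^{\ell(\nu)k}$; the intersection in the statement is to be read as the set of common tuples of partitions, obtained by zero-padding each $\al^i$ to a fixed length, and the identification is harmless because positivity of both $c^{\mu}_{\al^1\cdots}$ and $c^{\nu}_{\al^1\cdots}$ forces $\ell(\al^i)\le\min(\ell(\mu),\ell(\nu))$ for all $i$. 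Beyond this bookkeeping the proposition is an immediate corollary of \eqref{eq:mon_LR}.
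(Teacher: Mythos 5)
Your proposal is correct and follows essentially the same route as the paper: the paper states this proposition as an immediate consequence of the expansion \eqref{eq:mon_LR} together with the nonnegativity of the multi-LR coefficients, which is exactly your argument. Your extra remarks on the degenerate case $|\bba|\neq n$ and on zero-padding when $\ell(\mu)\neq\ell(\nu)$ are harmless bookkeeping that the paper leaves implicit.
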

We turn towards understanding the above set of points, and in particular, whether they  form a convex polytope.

\subsection{Horn inequalities for multi-LR's}

We first reduce our multi-LR positivity problem from~\eqref{eq:mon_LR} and~\eqref{eq:p_mu_a} to the case of regular LR coefficients.
Let again $c^{\mu}_{\alpha^1,\alpha^2,\ldots} = \langle s_{\alpha^1} s_{\alpha^2} \cdots, s_\mu \rangle$ be  the multi-LR coefficients.

\begin{thm}[\cite{van2001littlewood}]\label{thm:vlee}
Let $\lambda,\mu,\nu$ be partitions such that $|\lambda| = |\mu| + |\nu|$.
Then $c^{\lambda}_{\mu,\nu} = \langle s_\lambda, s_{\mu\diamond \nu}\rangle$,
where $\mu \diamond \nu$ denotes the skew shape $(\nu_1^{\ell(\mu)}+\mu,\nu)/\nu$.
\end{thm}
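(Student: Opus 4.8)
**Proof plan for Theorem \ref{thm:vlee} (van Leeuwen's identity $c^\lambda_{\mu,\nu} = \langle s_\lambda, s_{\mu\diamond\nu}\rangle$).**

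The plan is to interpret both sides combinatorially via the Littlewood--Richardson rule and exhibit a weight-preserving bijection. Recall that $\mu\diamond\nu$ is the skew shape $(\nu_1^{\ell(\mu)}+\mu,\;\nu)/\nu$: it is obtained by placing a copy of $\nu$ in the upper-left corner and, directly to its right and below, room for $\mu$ shifted $\nu_1$ columns over, with the rows of $\mu$ occupying the first $\ell(\mu)$ rows. First I would unwind the left-hand side: by the LR rule, $c^\lambda_{\mu,\nu}$ counts pairs consisting of an LR filling witnessing $c^\tau_{\mu,?}$-type data, but more directly $c^\lambda_{\mu,\nu} = \langle s_\mu s_\nu, s_\lambda\rangle$ counts LR skew tableaux of shape $\lambda/\nu$ and content $\mu$ whose reverse reading word is ballot (equivalently, by symmetry, shape $\lambda/\mu$ and content $\nu$). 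On the right-hand side, $\langle s_\lambda, s_{\mu\diamond\nu}\rangle = c^\lambda_{\text{(shape }\mu\diamond\nu)}$ counts LR skew tableaux of shape $\mu\diamond\nu$ and content $\lambda$ with ballot reverse reading word.

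The key steps, in order. First, pin down the geometry of $\mu\diamond\nu$ precisely and observe that any LR filling of it with content $\lambda$ forces the sub-tableau on the $\nu$-part (the lower-left block, shape $(\nu_1^{\ell(\mu)}+\mu)/(\nu_1^{\ell(\mu)})$ together with the bottom rows, i.e.\ the complement region) to be maximally constrained: the ballot condition plus the column-strictness will force large portions of the filling to be ``frozen'' (the standard phenomenon where the region below/left of $\nu$ must be filled with $i$'s in row $i$). Second, after identifying the frozen part, the remaining free cells form a region whose LR fillings of the residual content are in bijection with LR skew tableaux of shape $\lambda/\nu$ and content $\mu$ — this is the content of $c^\lambda_{\mu,\nu}$. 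Third, verify that the bijection is weight-preserving and respects the ballot condition in both directions; this reduces to a local check on reading words. An alternative, cleaner route I would try in parallel: apply the general identity $\langle s_\lambda, s_{A/B}\rangle = c^\lambda_{?}$ expansions together with $s_{A/B} = \sum_\kappa c^A_{B\kappa} s_\kappa$, so that $\langle s_\lambda, s_{\mu\diamond\nu}\rangle = c^{\;\nu_1^{\ell(\mu)}+\mu}_{\;\nu,\lambda}$, and then recognize the right side as $c^\lambda_{\mu,\nu}$ by a ``rectangular complementation'' argument — embedding everything in an $\ell(\mu)\times\nu_1$-ish bounding box and using the involution $c^{R}_{\alpha\beta} = c^{\alpha}_{?}$ that comes from the $s_\lambda \mapsto s_{\lambda^c}$ symmetry inside a rectangle (the Schur function complementation $s_{\kappa^c} $ inside an $a\times b$ rectangle). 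Making the box-sizes match $\ell(\mu)$ rows and enough columns, the identity $c^{\nu_1^{\ell(\mu)}+\mu}_{\nu,\lambda} = c^\lambda_{\mu,\nu}$ should fall out of associativity/commutativity of the LR coefficients plus one complementation.

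The main obstacle I anticipate is making the ``frozen region'' argument fully rigorous — precisely delimiting which cells of an arbitrary LR filling of $\mu\diamond\nu$ are forced and checking that no ballot-legal filling escapes this, since the shape $\mu\diamond\nu$ is a genuinely skew (disconnected-looking) shape and the interaction between the $\nu$-block and the $\mu$-block through the shared columns is the delicate point. If the direct bijective approach bogs down there, the symmetry/complementation route sidesteps it by never looking at individual fillings, at the cost of having to set up the rectangular bounding box carefully so that all relevant partitions fit and the complementation is an honest involution; I would present whichever of the two comes out shorter, most likely citing the original argument of \cite{van2001littlewood} for the bijection and only sketching the verification of the ballot/weight compatibility.
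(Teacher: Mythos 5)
The result you are proving is a one-liner once you notice the key geometric fact, and your plan misses it: the skew diagram $\mu\diamond\nu$ is a \emph{disjoint} union of the two straight shapes, with $\mu$ occupying the first $\ell(\mu)$ rows shifted $\nu_1$ columns to the right and $\nu$ sitting below it at the left, so the two components share no row and no column. Hence every SSYT of shape $\mu\diamond\nu$ is just an independent pair of SSYT of shapes $\mu$ and $\nu$, giving $s_{\mu\diamond\nu}=s_\mu s_\nu$, and therefore $\langle s_\lambda, s_{\mu\diamond\nu}\rangle=\langle s_\lambda, s_\mu s_\nu\rangle=c^\lambda_{\mu\nu}$ by the definition of the Littlewood--Richardson coefficients as structure constants. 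This is exactly the observation the paper relies on (see the proof of Lemma~\ref{lem:multiLR-ordinary}: ``the skew shape consists of disjoint straight shapes''); no bijection between fillings and no complementation is needed.

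Beyond being far longer than necessary, your two routes have concrete problems as stated. In the bijective route you place $\nu$ in the upper-left and claim the $\nu$-part of a ballot filling is frozen; this is backwards. With the paper's reverse reading word (top to bottom, right to left) the component read first is the $\mu$-component in the top rows, and it is \emph{that} straight-shape component which ballotness forces to be superstandard (row $i$ filled with $i$'s); the $\nu$-component is not frozen at all --- it carries content $\lambda-\mu$ subject to a $\mu$-shifted lattice condition, and identifying the number of such fillings with $c^\lambda_{\mu\nu}$ is itself a nontrivial reformulation of the LR rule, so the ``frozen region'' step as you describe it would fail and the residual step is not free. In the complementation route, the identity should read $\langle s_\lambda, s_{\mu\diamond\nu}\rangle=c^{A}_{B,\lambda}$ with $A$ the full outer partition $(\nu_1+\mu_1,\ldots,\nu_1+\mu_{\ell(\mu)},\nu_1,\nu_2,\ldots)$ and $B$ the rectangle $(\nu_1^{\ell(\mu)})$ (not the truncated outer shape you wrote), and the asserted reduction of $c^{A}_{B,\lambda}$ to $c^\lambda_{\mu\nu}$ by ``one complementation'' is precisely the content of the theorem, not an available lemma. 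So as written the proposal does not constitute a proof; the factorization of the skew Schur function over connected components is the missing (and intended) idea.
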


The skew shape $\mu \diamond \nu$ is just the union of two disjoint shapes $\mu$ and $\nu$ with the top right corner of $\nu$ touching the bottom left corner of $\mu$. 
We can generalize this as follows

\begin{lemma}\label{lem:multiLR-ordinary}
Let $\la \vdash n$. For a $k$-tuple of partitions $\al^1,\cdots, \al^k$ with $\ell(\al^i) \leq \ell$, such that $|\al^1|+\cdots+|\al^k|=n$ we have that
$c^{\la}_{\al^1\cdots \al^k} = \langle s_\la, s_{\al^1 \diamond \al^2 \diamond \cdots \diamond \al^k} \rangle = c^{\omega(\al)}_{\la, \delta_k(n,\ell)}$,
where $\al^1 \diamond \al^2 \diamond \al^3 \cdots = \al^1 \diamond (\al^2 \cdots)$ recursively,
$$ \omega(\al) := ( (n(k-1))^{\ell} + \al^1, (n(k-2))^\ell + \al^2, \cdots ,\al^k ) \text{ and } \delta_k(n,\ell) :=  \left( (n(k-1))^{\ell} , (n(k-2))^\ell , \cdots ,n^\ell \right).$$ 
\end{lemma}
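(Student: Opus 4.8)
The plan is to prove Lemma~\ref{lem:multiLR-ordinary} by induction on $k$, using Theorem~\ref{thm:vlee} of van Leeuwen as the base case and the key engine of the induction step. For the first equality $c^{\la}_{\al^1\cdots\al^k} = \langle s_\la, s_{\al^1\diamond\cdots\diamond\al^k}\rangle$, I would argue as follows. For $k=2$ this is exactly Theorem~\ref{thm:vlee}. For the inductive step, write $c^{\la}_{\al^1,\ldots,\al^k} = \langle s_\la, s_{\al^1} s_{\al^2}\cdots s_{\al^k}\rangle = \langle s_\la, s_{\al^1}\cdot(s_{\al^2}\cdots s_{\al^k})\rangle$. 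By the recursive definition of multi-LR coefficients together with the inductive hypothesis applied to the $(k-1)$-tuple $(\al^2,\ldots,\al^k)$, the product $s_{\al^2}\cdots s_{\al^k}$ expands as a positive sum of Schur functions $s_\tau$, each appearing with multiplicity $\langle s_\tau, s_{\al^2\diamond\cdots\diamond\al^k}\rangle$; equivalently $s_{\al^2}\cdots s_{\al^k} = s_{\al^2\diamond\cdots\diamond\al^k}$ as symmetric functions (a skew Schur function equals the sum of Schur functions weighted by LR coefficients against the skew shape). Hence $c^{\la}_{\al^1,\ldots,\al^k} = \langle s_\la, s_{\al^1}\cdot s_{\al^2\diamond\cdots\diamond\al^k}\rangle$, and applying the $k=2$ case (the $\diamond$ operation glues a straight shape to a skew shape in the same disconnected way) yields $\langle s_\la, s_{\al^1\diamond(\al^2\diamond\cdots\diamond\al^k)}\rangle = \langle s_\la, s_{\al^1\diamond\cdots\diamond\al^k}\rangle$, closing the induction. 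One point needing care here is that Theorem~\ref{thm:vlee} as stated glues two \emph{straight} shapes; I would note that its proof (or a direct ballot/LR-rule argument) applies verbatim when the lower piece is itself a skew shape, since the gluing is along disjoint rows and columns.

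For the second equality, $\langle s_\la, s_{\al^1\diamond\cdots\diamond\al^k}\rangle = c^{\omega(\al)}_{\la,\delta_k(n,\ell)}$, the idea is that the disconnected skew shape $\al^1\diamond\cdots\diamond\al^k$ is obtained from a single straight shape $\omega(\al)$ by deleting the straight shape $\delta_k(n,\ell)$: concretely, $\omega(\al)$ stacks the blocks $\al^1,\ldots,\al^k$ vertically with each block $\al^i$ ($i<k$) shifted right by padding with $n(k-i)$ columns of width $\ell$ on the left, and $\delta_k(n,\ell)$ is precisely the complementary ``staircase of rectangles'' that one removes. The width bounds $\ell(\al^i)\le\ell$ and $|\al^i|\le n$ guarantee the blocks are disjoint and non-overlapping after the shift, so $\omega(\al)/\delta_k(n,\ell)$ is exactly $\al^1\diamond\cdots\diamond\al^k$ up to translation, which does not affect the skew Schur function. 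Then $\langle s_\la, s_{\al^1\diamond\cdots\diamond\al^k}\rangle = \langle s_\la, s_{\omega(\al)/\delta_k(n,\ell)}\rangle = c^{\omega(\al)}_{\la,\delta_k(n,\ell)}$ by the standard identity $s_{\sigma/\rho} = \sum_\la c^\sigma_{\rho\la} s_\la$. I would verify the shape bookkeeping carefully, checking that with $n=|\la|$ and the chosen paddings the rectangles in $\delta_k(n,\ell)$ fit together with the $\al^i$ into the rows of $\omega(\al)$ without gaps or overlaps.

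The main obstacle I anticipate is purely combinatorial bookkeeping: getting the precise row-shift amounts $n(k-i)$ right and confirming that $\omega(\al)$ is genuinely a partition (weakly decreasing rows) given only $\ell(\al^i)\le\ell$ and $|\al^i|\le n$ — in particular that the bottom row of block $\al^i$ is at least as long as the top row of block $\al^{i+1}$ after shifting, which forces the padding to exceed $n$, and that $\al_1^i \le n$ so the padded rows stay ordered. I would state this as a short verification that each padded block occupies columns disjoint from the next, making the removal $\omega(\al)/\delta_k(n,\ell)$ literally the disjoint juxtaposition $\al^1\diamond\cdots\diamond\al^k$. Once the shapes match, both claimed equalities reduce to textbook identities ($s_{\sigma/\rho} = \sum c^\sigma_{\rho\la}s_\la$ and translation-invariance of skew Schur functions) plus the inductive use of van Leeuwen's theorem, so no deep new input is required beyond the results already cited in the excerpt.
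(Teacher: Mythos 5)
Your proposal is correct and follows essentially the same route as the paper: the paper's proof likewise either recursively applies Theorem~\ref{thm:vlee} or observes directly from the SSYT definition that $s_{\al^1}\cdots s_{\al^k}=s_{\al^1\diamond\cdots\diamond\al^k}=s_{\omega(\al)/\delta_k(n,\ell)}$ because the skew shape is a disjoint union of the straight shapes $\al^i$, and then uses $s_{\sigma/\rho}=\sum_\la c^{\sigma}_{\rho\la}s_\la$. The only difference is that you spell out the padding bookkeeping (that $\al^{i+1}_1\le n$ makes $\omega(\al)$ a partition and the blocks column-disjoint), which the paper leaves implicit.
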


\begin{proof}
We can recursively apply Theorem~\ref{thm:vlee}. Alternatively, we can see from the combinatorial definition via SSYTs that\[ s_{\alpha^1} \cdots s_{\alpha^k} = s_{\alpha^1 \diamond \alpha^2 \diamond \alpha^3 \cdots} = s_{\omega(\alpha) / \delta_k(n, \ell)}.\]
This holds because the skew shape consists of disjoint straight shapes \(\alpha^1, \alpha^2, \ldots, \alpha^k\).
\end{proof}

We next turn to LR positivity as described by the Horn inequalities. 
For a subset $I = \{i_1< i_2 < \dots < i_s\} \subset [r]$, let $\rho(I)$ denote the partition $\rho(I) := (i_s-s,\dots,i_2-2,i_1-1).$ We say a triple of subsets $I, J, K \subset [r]$ is \defn{LR-consistent} if they have the same cardinality $s$ and $c^{\rho(I)}_{\rho(J),\rho(K)} =1.$

\begin{thm}[\cite{zelevinsky1997littlewoodrichardson,klyachko,KT}]\label{thm:horn}
    Let $\lambda,\mu,\nu \in \mathbb{N}^r$ with weakly decreasing component. Then $c^\lambda_{\mu,\nu} >0$ if and only if $|\lambda| = |\mu| + |\nu|$ and $$\sum_{i\in I}\lambda_i \le \sum_{j\in J}\mu_j+\sum_{k\in K}\nu_k$$ for all $LR$-consistent triples $I,J,K \subset [r]$.
\end{thm}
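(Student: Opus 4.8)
This is a deep theorem and I would not attempt to prove it from scratch; the plan is to assemble it from the circle of ideas in the cited references, separating necessity and sufficiency of the Horn inequalities and working throughout with the \emph{hive model}. For weakly decreasing $\lambda,\mu,\nu\in\mathbb{N}^r$ with $|\lambda|=|\mu|+|\nu|$ one has that $c^\lambda_{\mu,\nu}$ equals the number of integer hives with boundary $(\lambda,\mu,\nu)$, and the hives with fixed boundary form a rational polytope $H_r(\lambda,\mu,\nu)=\mathcal{C}_r\cap\{\text{boundary}=(\lambda,\mu,\nu)\}$ carved out of one fixed rational polyhedral cone $\mathcal{C}_r$. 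Hence $c^\lambda_{\mu,\nu}>0$ already forces $H_r(\lambda,\mu,\nu)$ to be nonempty as a \emph{real} polytope, i.e.\ $(\lambda,\mu,\nu)$ to lie in the projection $\mathrm{Horn}_r$ of $\mathcal{C}_r$ onto boundary data, which is itself a rational polyhedral cone. The whole argument is then organized as an induction on $r$, with the trivial base $r=1$, where $c^{(a)}_{(b),(c)}$ is $1$ when $a=b+c$ and $0$ otherwise.

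For \emph{necessity} I would show that every facet of $\mathrm{Horn}_r$ is, after clearing denominators, one of the inequalities $\sum_{i\in I}\lambda_i\le\sum_{j\in J}\mu_j+\sum_{k\in K}\nu_k$ for an LR-consistent triple $(I,J,K)$, together with the trace equality $|\lambda|=|\mu|+|\nu|$. Validity of each such inequality on $\mathcal{C}_r$ is the soft half: restricting a hive (equivalently an LR filling of $\lambda/\mu$ with content $\nu$) to the sub-configuration indexed by $(I,J,K)$ produces a smaller hive whose boundary is governed by $\rho(I),\rho(J),\rho(K)$, and the inequality falls out. The delicate point is deciding which of these inequalities are \emph{essential}, i.e.\ actually facets: a degeneration of $\mathcal{C}_r$ along a lattice path gives a facet precisely when the associated smaller hive polytope is a single point, equivalently $c^{\rho(I)}_{\rho(J),\rho(K)}=1$ --- this is exactly where the recursion built into the definition of ``LR-consistent'' is forced, and where the inductive hypothesis on $r$ enters.

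For \emph{sufficiency}, assuming $|\lambda|=|\mu|+|\nu|$ and all Horn inequalities, the plan is: (i) by the necessity analysis $(\lambda,\mu,\nu)\in\mathrm{Horn}_r$, so $H_r(\lambda,\mu,\nu)$ is a nonempty rational polytope, and clearing the denominators of a rational point of it yields an $N\ge1$ with $c^{N\lambda}_{N\mu,N\nu}>0$; (ii) invoke the Knutson--Tao \emph{saturation theorem}, $c^{N\lambda}_{N\mu,N\nu}>0\Rightarrow c^\lambda_{\mu,\nu}>0$ \cite{KT}, to finish. Equivalently one can route step (i) through Klyachko's theorem~\cite{klyachko} identifying $\mathrm{Horn}_r$ with the cone of eigenvalue triples $\bigl(\mathrm{spec}\,A,\mathrm{spec}\,B,\mathrm{spec}(A+B)\bigr)$ of $r\times r$ Hermitian matrices, plus the classical asymptotic dictionary between that cone and positivity of $c^{N\lambda}_{N\mu,N\nu}$, still closing with saturation.

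The main obstacle is unambiguously the saturation theorem: its honeycomb proof --- degenerate to a honeycomb with generic boundary and run the ``largest lozenge''/rigidity argument to upgrade a real honeycomb to an integral one --- is the step with no soft shortcut. A secondary but genuine difficulty is structural: because ``LR-consistent'' is defined via smaller LR coefficients equalling $1$, both the statement and the facet analysis must be set up as an induction on $r$, taking care that $\rho(I),\rho(J),\rho(K)$ lie in $\mathbb{N}^{s}$ with $s<r$ for every proper triple, so that the recursion terminates.
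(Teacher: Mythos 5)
The paper does not prove this statement at all: it is quoted as a known theorem with citations to Zelevinsky, Klyachko, and Knutson--Tao, so there is no internal proof to compare against. Your outline is a faithful reconstruction of how the result is actually established in those references --- necessity and the facet description of the Horn cone via the hive/honeycomb model (with the refinement that only triples with $c^{\rho(I)}_{\rho(J)\rho(K)}=1$ are needed, due to Knutson--Tao--Woodward), and sufficiency via a rational hive point scaled to give $c^{N\lambda}_{N\mu,N\nu}>0$ followed by the saturation theorem --- so, modulo the deep cited ingredients you explicitly flag rather than reprove, the plan is sound and consistent with the paper's sourcing.
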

A complete list of LR-consistent triples and the resulting inequalities for $r=6$ is given in Appendix~\ref{appendix_horn}.

For a set $I \subset \{1,\ldots, \ell k\}$ construct the set $D(I):= \{ (i,j) \in [k]\times[\ell], \text{ such that } \ell(i-1)+j \in I\}$, that is the set of pairs $( \lceil \frac{x}{\ell} \rceil , x\% \ell),$ where $x \in I$ and $x \% \ell$ is its remainder by division by $\ell$, adjusted to be in the range from $1$ to $\ell$.
Applying Theorem~\ref{thm:horn} with $\la = \omega(\al)$, $\mu$ and $\nu = \delta_k(n,\ell)$ from Lemma~\ref{lem:multiLR-ordinary}, and observing that
if $m=\ell(i-1) + j$ then $\omega(\al)_m = n(k-i) +\al^i_j$ and $(\delta_k(n,\ell))_m = n(k-i)$ we get the following.

\begin{Cor}\label{cor:polytope_horn}
Let $\ell(\mu)=\ell$ and $\bba=(a_1,\ldots,a_k)$. Then $P(\mu;\bba)$ is a polytope consisting of the points $(\al^1,\ldots,\al^k) \in \mathbb{Z}_{\geq 0}^{\ell k}$ satisfying the following linear conditions. 
\begin{align}
    \sum_j \al^i_j &\quad = \quad a_i, \qquad \text{ for }i\in[k];\\
    \al^i_j &\quad \geq \quad \al^i_{j+1},  \qquad \text{ for }j\in[\ell-1], \, i\in[k]; \\
    \sum_{ (i,j) \in D(I) } \left( n(k-i) +\al^i_j \right) &\quad \leq \quad \sum_{j \in J} \mu_j + \sum_{ (d,r) \in D(K)}  n(k-d) ,  
\end{align}
where the last inequalities hold for all LR-consistent triples $I,J,K \in[\ell k]$
\end{Cor}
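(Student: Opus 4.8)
The plan is to combine Lemma~\ref{lem:multiLR-ordinary}, which identifies the multi-LR coefficient $c^\mu_{\al^1\cdots\al^k}$ with the ordinary LR coefficient $c^{\omega(\al)}_{\mu,\delta_k(n,\ell)}$, with the Horn inequality criterion of Theorem~\ref{thm:horn}. The defining conditions on $P(\mu;\bba)$ split into two groups. The first group is immediate: the requirement that each $\al^i$ be a partition of $a_i$ of length at most $\ell$ translates directly into the equalities $\sum_j \al^i_j = a_i$ and the monotonicity inequalities $\al^i_j \geq \al^i_{j+1}$, together with $\al^i_j \geq 0$. (We should note that $\omega(\al)$ is automatically a partition whenever each $\al^i$ is, since consecutive blocks of $\omega(\al)$ are separated by the gap $n$ in the padding, which dominates any entry of $\al^i$; this is exactly the observation underlying Lemma~\ref{lem:multiLR-ordinary}.)

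The second group comes from applying Theorem~\ref{thm:horn} to the triple $\la=\omega(\al)$, $\mu$ (in the role of the first summand), $\nu=\delta_k(n,\ell)$, all viewed as weakly decreasing vectors in $\mathbb{N}^{\ell k}$ — note $\mu$ is padded with zeros to length $\ell k$. By Lemma~\ref{lem:multiLR-ordinary}, $c^\mu_{\al^1\cdots\al^k}>0$ iff $c^{\omega(\al)}_{\mu,\delta_k(n,\ell)}>0$, and the size condition $|\omega(\al)| = |\mu| + |\delta_k(n,\ell)|$ holds automatically given the equalities $\sum_j\al^i_j=a_i$ and $\sum_i a_i = n = |\mu|$. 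Theorem~\ref{thm:horn} then says positivity is equivalent to $\sum_{m\in I}\omega(\al)_m \leq \sum_{j\in J}\mu_j + \sum_{m\in K}(\delta_k(n,\ell))_m$ for all LR-consistent triples $I,J,K\subset[\ell k]$. Now I substitute the explicit entries: writing $m = \ell(i-1)+j$ with $i\in[k]$, $j\in[\ell]$ — equivalently $(i,j)\in D(I)$ — we have $\omega(\al)_m = n(k-i)+\al^i_j$ and $(\delta_k(n,\ell))_m = n(k-i)$. Reindexing the sums over $I$ and $K$ by $D(I)$ and $D(K)$ yields exactly the third displayed family of inequalities.

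It remains only to check that these conditions cut out a genuine polytope, i.e. a bounded region. Boundedness is clear: each $\al^i_j$ lies in $[0, a_i]$ by the equality and nonnegativity constraints. The set is then the intersection of a bounded region with finitely many rational half-spaces and hyperplanes (there are finitely many LR-consistent triples in $[\ell k]$), hence a polytope; its integer points are precisely the tuples $(\al^1,\ldots,\al^k)$ with $c^\mu_{\al^1\cdots\al^k}>0$, as claimed. I do not anticipate a serious obstacle here — the only point requiring care is bookkeeping the index correspondence $m \leftrightarrow (i,j)$ and the padding of $\mu$ to length $\ell k$ so that Theorem~\ref{thm:horn} applies verbatim; this is the routine but error-prone step.
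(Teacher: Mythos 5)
Your proposal is correct and follows essentially the same route as the paper: the corollary is obtained exactly by combining Lemma~\ref{lem:multiLR-ordinary} with Theorem~\ref{thm:horn} applied to the triple $\omega(\al)$, $\mu$ (padded to length $\ell k$), $\delta_k(n,\ell)$, and translating indices $m=\ell(i-1)+j$ via $D(I)$, $D(K)$. Your extra remarks on the partition constraints, the size condition, and boundedness are the same routine checks the paper leaves implicit.
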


\subsection{The case $k=3$.}
As we know the values of LR coefficients for the triples of partitions $\rho(I), \rho(J), \rho(K)$ when $|I|\leq 6$, see Appendix~\ref{appendix_horn}, we can write all the linear inequalities defining the set of $(\la,\mu,\nu)$ with $\ell(\la),\ell(\mu),\ell(\nu)\leq 6$ and see that they are the integer points in a convex polytope. In general, the polytope $P(\mu;\bba)$ is defined by a large set of inequalities, which are complicated and themselves recursively defined. It is not known whether it always has at least one integral nonzero vertex, see the Appendix. We will approach the first cases beyond Section~\ref{s:2-3-row}. 

We will restrict ourselves to the Kronecker product of a two-row and a three-row partition and monomials $x_1^{a_1}x_2^{a_2}x_3^{a_3}$. Let $\ell(\nu) =3$ and $\ell(\mu)=2$.
Our goal is to describe $P(\nu;a_1,a_2,a_3) \cap P(\mu;a_1,a_2,a_3)$. Since we need $c^{\mu}_{\al^1, \al^2,\al^3}>0$, which would imply that $\al^i \subset \mu$, we must have $\ell(\al^i) \leq 2$. 
Applying Lemma~\ref{lem:multiLR-ordinary}, we have that $c^\nu_{\al^1,\al^2,\al^3}= c^{\omega(\al) }_{\nu, \delta_3(n,2)}$ with 
$\omega(\al) = (2n+\alpha^1_1,2n+\alpha^1_2, n+\alpha^2_1, n+\alpha^2_2, \alpha^3_1,\alpha^3_2)$, $\nu = (\nu_1,\nu_2,\nu_3,0,0,0)$, $\delta_3(n,2) = (2n,2n,n,n,0,0)$. Since $n = |\nu| = |\alpha^1|+|\alpha^2|+|\alpha^3|$, we can derive the following relations from Theorem~\ref{thm:horn} applied to the triple of partitions $\omega(\al), \nu, \delta_3(n,2)$, to get that
\begin{align}\label{eq:c_nu_pos}
c^\nu_{\alpha^1,\alpha^2,\alpha^3}>0 \text{ if and only if } \qquad \qquad \qquad \qquad\qquad \qquad & \\
\notag  \max \{\alpha^1_1,\alpha^2_1,\alpha^3_1, \alpha^1_2+\alpha^2_2, \alpha^1_2+\alpha^3_2,\alpha^2_2+\alpha^3_2\} &\le \nu_1\\
    \notag \max \{\alpha^1_2,\alpha^2_2,\alpha^3_2\}  &\le \nu_2\\
    \notag \alpha^1_2+\alpha^2_2+\alpha^3_2 &\le \nu_2+\nu_3\\
  \notag   \max \{\alpha^1_1+\alpha^2_2+\alpha^3_2,\alpha^1_2+\alpha^2_1+\alpha^3_2,\alpha^1_2+\alpha^2_2+\alpha^3_1 \} &\le \nu_1+\nu_3\\
    \notag \max \{\alpha^1_1+\alpha^2_1+\alpha^3_2,\alpha^1_2+\alpha^2_1+\alpha^3_1,\alpha^1_1+\alpha^2_2+\alpha^3_1 \} &\le \nu_1+ \nu_2\\
    \notag \max \{\alpha^1_1+\alpha^1_2+\alpha^2_2+\alpha^3_2,\alpha^1_2+\alpha^2_1+\alpha^2_2+\alpha^3_2,\alpha^1_2+\alpha^2_2+\alpha^3_1 +\alpha^3_2\} &\le \nu_1+\nu_2.
\end{align}
Similarly we obtain the inequalities for $c^{\mu}_{\al^1,\al^2,\al^3}>0$, and observe that they are the same as \eqref{eq:c_nu_pos} with $(\nu_1,\nu_2,\nu_3)$ replaced by $(\mu_1,\mu_2,0)$. Noting that the inequalities are all of the form ``linear combination of $\al^i_j$'s $\leq$ linear combinations of $\nu_i$'s'', we can combine them for $\mu$ and $\nu$ as

\begin{align}\label{eq:c_nu_mu_pos}
c^\nu_{\alpha^1,\alpha^2,\alpha^3}c^\mu_{\al^1,\al^2,\al^3}>0 \text{ if and only if } \qquad \qquad \qquad \qquad\qquad & \\
\notag  \max \{\alpha^1_1,\alpha^2_1,\alpha^3_1, \alpha^1_2+\alpha^2_2, \alpha^1_2+\alpha^3_2,\alpha^2_2+\alpha^3_2\} &\le \min\{ \nu_1, \mu_1\}\\
    \notag \max \{\alpha^1_2,\alpha^2_2,\alpha^3_2\}  &\le \min\{ \nu_2, \mu_2\} \\
    \notag \alpha^1_2+\alpha^2_2+\alpha^3_2 &\le \min\{ \nu_2+\nu_3, \mu_2\} \\
  \notag   \max \{\alpha^1_1+\alpha^2_2+\alpha^3_2,\alpha^1_2+\alpha^2_1+\alpha^3_2,\alpha^1_2+\alpha^2_2+\alpha^3_1 \} &\le \min\{ \nu_1+\nu_3, \mu_1\} \\
    \notag \max \{\alpha^1_1+\alpha^2_1+\alpha^3_2,\alpha^1_2+\alpha^2_1+\alpha^3_1,\alpha^1_1+\alpha^2_2+\alpha^3_1 \} &\le \min\{ \nu_1+ \nu_2, \mu_1+\mu_2\} \\
    \notag \max \{\alpha^1_1+\alpha^1_2+\alpha^2_2+\alpha^3_2,\alpha^1_2+\alpha^2_1+\alpha^2_2+\alpha^3_2,\alpha^1_2+\alpha^2_2+\alpha^3_1 +\alpha^3_2\} &\le \min\{ \nu_1+\nu_2, \mu_1+\mu_2\}.
\end{align}

Since $\mu_1+\mu_2=n$, thus on the RHS we have some simplifications: $\nu_1+\nu_2 \leq \mu_1+\mu_2$. Note also that the case $\mu_1\leq \nu_1$ was resolved in Proposition~\ref{prop:2-3-row1}, and so we can assume that $\mu_1 > \nu_1$ and then $\nu_2+\nu_3=n-\nu_1 > n-\mu_1=\mu_2$. We also have that the $\max\{\al^1_2, \al^2_2,\al^3_2\} \leq \al^1_2 + \al^2_2 + \al^3_2$, and so we can replace the RHS of the second inequality by $\nu_2$, as the third inequality would imply it is $\leq \mu_2$. We can thus rewrite the above inequalities as

\begin{align}\label{eq:c_nu_mu_pos_2}
c^\nu_{\alpha^1,\alpha^2,\alpha^3}c^\mu_{\al^1,\al^2,\al^3}>0 \text{ if and only if } \qquad \qquad \qquad \qquad\qquad & \\
\notag  \max \{\alpha^1_1,\alpha^2_1,\alpha^3_1, \alpha^1_2+\alpha^2_2, \alpha^1_2+\alpha^3_2,\alpha^2_2+\alpha^3_2\} &\le  \nu_1\\
    \notag \max \{\alpha^1_2,\alpha^2_2,\alpha^3_2\}  &\le \nu_2 \\
    \notag \alpha^1_2+\alpha^2_2+\alpha^3_2 &\le  \mu_2 \\
  \notag   \max \{\alpha^1_1+\alpha^2_2+\alpha^3_2,\alpha^1_2+\alpha^2_1+\alpha^3_2,\alpha^1_2+\alpha^2_2+\alpha^3_1 \} &\le \min\{ \nu_1+\nu_3, \mu_1\} \\
    \notag \max \{\alpha^1_1+\alpha^2_1+\alpha^3_2,\alpha^1_2+\alpha^2_1+\alpha^3_1,\alpha^1_1+\alpha^2_2+\alpha^3_1 \} &\le  \nu_1+ \nu_2\\
    \notag \max \{\alpha^1_1+\alpha^1_2+\alpha^2_2+\alpha^3_2,\alpha^1_2+\alpha^2_1+\alpha^2_2+\alpha^3_2,\alpha^1_2+\alpha^2_2+\alpha^3_1 +\alpha^3_2\} &\le \nu_1+\nu_2.
\end{align}

\subsection{The polytope $P(\mu;\bba) \cap P(\nu;\bba)$ }

The linear inequalities~\eqref{eq:c_nu_mu_pos_2} describe a polytope in $\mathbb{R}^6$ for the variables $(\al^1_1,\al^1_2,\ldots)$. By Section~\ref{s:LRs} a monomial $x^\bba$ occurs in $s_\mu \ast s_\nu$  if and only if the set $P(\mu;\bba) \cap P(\nu;\bba)$ has a nonzero integer point. This set corresponds to the section of the polytope in~\eqref{eq:c_nu_mu_pos_2} with $\al^i_1+\al^i_2=a_i$ for $i=1,2,3$, as well as $\al^i_1 \geq \al^i_2$, which comes from  $\al^i$s being partitions. To simplify notation, let $x:=\al^1_1, y:=\al^2_1, z:=\al^3_1$, so $\al^1_2=a_1-x$ etc. Define $\mathcal{P}(\mu,\nu,\bba)$ to be that polytope, substituting the new constraints in~\eqref{eq:c_nu_mu_pos_2}; it is defined by the following inequalities
\begin{align}
\notag \mathcal{P}(\mu,\nu,\bba): =\Bigg\{(x,y,z) \in \mathbb{R}^3 \text{ such that } \. \qquad \qquad & \\
a_1 - \min(\nu_2, \mu_2, \frac{a_1}{2}) &\le x \le \min(a_1,\nu_1)\label{eqn1}
\tag{1}\\
a_2-\min(\nu_2, \mu_2, \frac{a_2}{2}) &\le y \le \min(a_2,\nu_1) \label{eqn2}
\tag{2}\\
a_3- \min(\nu_2, \mu_2,\frac{a_3}{2}) &\le z \le \min(a_3,\nu_1) \label{eqn3}
\tag{3} \\
\max(\nu_3, a_1+a_2-\nu_1) &\le x+y \label{eqn4}
\tag{4}\\
\max(\nu_3, a_1+a_3-\nu_1) &\le x+z \label{eqn5}
\tag{5}\\
\max(\nu_3, a_2+a_3-\nu_1) &\le y+z  \label{eqn6}
\tag{6}\\
\mu_1 &\le x+y+z \label{eqn7}
\tag{7}\\
\max(\nu_2,\mu_2) -a_1 &\le -x+y+z \le \nu_1+\nu_2-a_1\label{eqn8}
\tag{8}\\
\max(\nu_2,\mu_2) -a_2 &\le x-y+z \le \nu_1+\nu_2-a_2 \label{eqn9}
\tag{9}\\
\. \max(\nu_2,\mu_2) -a_3 &\le x+y-z \le \nu_1+\nu_2-a_3 \Bigg\} \label{eqn10}
\tag{10}
\end{align}

We can summarize these descriptions and derivation in the following. 

\begin{prop}\label{prop:2-3row2}
The monomial $x^\bba$ occurs in $s_\mu\ast s_\nu$ if and only if $P(\mu;\bba) \cap P(\nu;\bba) \neq \emptyset$. When $\ell(\mu)=2, \ell(\nu)=3$ and $\nu_1<\mu_1$ this is equivalent to $\mathcal{P}(\mu,\nu,\bba) \cap \mathbb{Z}^3 \neq \emptyset$.
\end{prop}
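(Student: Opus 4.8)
The plan is to assemble Proposition~\ref{prop:2-3row2} by chaining together the identities and reductions already established in this section, verifying that the translation from the $6$-dimensional polytope $P(\mu;\bba)\cap P(\nu;\bba)$ to the $3$-dimensional polytope $\mathcal{P}(\mu,\nu,\bba)$ is faithful both at the level of real points and at the level of integer points. First I would recall from Proposition~\ref{prop:kron_monomials} that $x^\bba$ appears in $s_\mu\ast s_\nu$ exactly when $P(\mu;\bba)\cap P(\nu;\bba)\neq\emptyset$, and that these are polytopes cut out by the Horn inequalities via Corollary~\ref{cor:polytope_horn}. For $\ell(\mu)=2$ the condition $c^{\mu}_{\al^1,\al^2,\al^3}>0$ forces each $\al^i\subseteq\mu$, hence $\ell(\al^i)\le 2$, so the relevant points live in $\mathbb{Z}_{\ge 0}^{6}$ with coordinates $(\al^1_1,\al^1_2,\al^2_1,\al^2_2,\al^3_1,\al^3_2)$. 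The combined positivity of $c^\nu$ and $c^\mu$ is then exactly the system~\eqref{eq:c_nu_mu_pos}, which I derived by applying Theorem~\ref{thm:horn} to the triples $(\omega(\al),\nu,\delta_3(n,2))$ and $(\omega(\al),\mu,\delta_3(n,2))$ from Lemma~\ref{lem:multiLR-ordinary} and taking the common refinement of the two lists of inequalities (the right-hand sides being minima of the $\nu$- and $\mu$-bounds), and which simplifies to~\eqref{eq:c_nu_mu_pos_2} using $\mu_1+\mu_2=n$, the standing assumption $\mu_1>\nu_1$ (so $\nu_2+\nu_3>\mu_2$; the case $\mu_1\le\nu_1$ is Proposition~\ref{prop:2-3row1}), and the elementary observation $\max\{\al^1_2,\al^2_2,\al^3_2\}\le\al^1_2+\al^2_2+\al^3_2$.

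Next I would impose the slicing constraints: a point of $P(\mu;\bba)\cap P(\nu;\bba)$ additionally satisfies $\al^i_1+\al^i_2=a_i$ and $\al^i_1\ge\al^i_2$ for $i=1,2,3$. Substituting $x=\al^1_1$, $y=\al^2_1$, $z=\al^3_1$ (so $\al^i_2=a_i-\al^i_1$) turns the $6$-dimensional slice into a subset of $\mathbb{R}^3$. I would then go through the six lines of~\eqref{eq:c_nu_mu_pos_2} one by one and rewrite each in the $(x,y,z)$ coordinates, collecting the resulting inequalities and grouping the several ``$\max\{\cdots\}\le\cdots$'' conditions into the consolidated constraints~\eqref{eqn1}--\eqref{eqn10}. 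For instance, $\al^i_1\le\nu_1$ together with $\al^i_1\ge\al^i_2$ (i.e. $x\ge a_1/2$ read as $a_1-x\le a_1/2$) and $\al^i_2\le\nu_2$, $\al^i_2\le\mu_2$ (i.e. $a_1-x\le\min(\nu_2,\mu_2)$) give~\eqref{eqn1}, and similarly for~\eqref{eqn2},~\eqref{eqn3}; the pairwise sums $\al^i_2+\al^j_2\le\nu_1$ become $a_i+a_j-(x_i+x_j)\le\nu_1$, i.e. the lower bounds $x+y\ge a_1+a_2-\nu_1$ in~\eqref{eqn4}--\eqref{eqn6}, combined with the $\nu_3$-bound coming from $\al^1_2+\al^2_2+\al^3_2\le\nu_2+\nu_3$ restricted pairwise; line three becomes~\eqref{eqn7}; and the last three lines, which compare a single $\al^i_1$ plus the other two $\al^j_2$'s against $\nu_1+\nu_2$ or $\min\{\nu_1+\nu_3,\mu_1\}$, become the two-sided bounds on $\pm x\pm y\pm z$ in~\eqref{eqn8}--\eqref{eqn10}. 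This bookkeeping defines $\mathcal{P}(\mu,\nu,\bba)$ precisely so that the affine map $(x,y,z)\mapsto(x,a_1-x,y,a_2-y,z,a_3-z)$ is a bijection between $\mathcal{P}(\mu,\nu,\bba)$ and the slice of the $6$-dimensional polytope, and it is manifestly a bijection between integer points of one and integer points of the other. Hence $P(\mu;\bba)\cap P(\nu;\bba)$ contains an integer point if and only if $\mathcal{P}(\mu,\nu,\bba)\cap\mathbb{Z}^3\neq\emptyset$, which combined with Proposition~\ref{prop:kron_monomials} gives the claim.

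The main obstacle is bookkeeping rather than conceptual: one must check that \emph{every} inequality in~\eqref{eq:c_nu_mu_pos_2}, after substitution, is captured by~\eqref{eqn1}--\eqref{eqn10} and that no spurious inequality has been added, in particular correctly handling the several disjuncts inside each $\max$ (for example that $\al^1_1+\al^1_2+\al^2_2+\al^3_2\le\nu_1+\nu_2$ reads as $x+(a_1-x)+(a_2-y)+(a_3-z)\le\nu_1+\nu_2$, i.e. $a_1+a_2+a_3-y-z\le\nu_1+\nu_2$, which after using $a_1+a_2+a_3=n$ and $\nu_1+\nu_2=n-\nu_3$ becomes $y+z\ge\nu_3$, recovering part of~\eqref{eqn4}--\eqref{eqn6}), and verifying that redundant conditions (such as the second line's $\mu_2$-bound being subsumed by the third line) are correctly dropped. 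I would present this as a careful but routine case check, noting which of the $\max$-disjuncts produce which of the labelled inequalities, and remark that the equivalence of integer points is immediate because the slicing map is unimodular in the relevant coordinates (it is the graph of an affine map with integer coefficients).
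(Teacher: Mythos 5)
Your proposal is correct and follows essentially the same route as the paper: the paper gives no separate proof of Proposition~\ref{prop:2-3row2}, stating it as a summary of exactly the chain you describe — Proposition~\ref{prop:kron_monomials} for the first equivalence, the reduction to $\ell(\al^i)\le 2$, the Horn-derived systems~\eqref{eq:c_nu_mu_pos} and~\eqref{eq:c_nu_mu_pos_2} (using $\mu_1+\mu_2=n$ and $\nu_1<\mu_1$), and the integral affine substitution $x=\al^1_1$, $y=\al^2_1$, $z=\al^3_1$ with $\al^i_1+\al^i_2=a_i$, $\al^i_1\ge\al^i_2$ yielding~\eqref{eqn1}--\eqref{eqn10}. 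Your added remark that the slicing map is an integer-coefficient affine bijection, so integer points correspond to integer points, is the (implicit) final step and is handled correctly.
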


\section{Integer points in $\mathcal{P}(\mu,\nu,\bba)$}\label{s:int_points}

We are now ready to prove the counterpart of Proposition~\ref{prop:2-3-row1} by analyzing the polytope $\mathcal{P}(\mu,\nu,\bba)$. Suppose that the monomials $x^{\bba^i}$ for some $i=1,\ldots$ occur in $s_\mu\ast s_\nu(x_1,x_2,x_3)$, and suppose that $\bbc:=t_1 \bba^1 + t_2 \bba^2 + \cdots\in \mathbb{Z}^3$ for some $t_i \in [0,1]$ with $t_1+t_2+\cdots=1$ is a convex combination. We want to show that $x^\bbc$ also occurs in that Kronecker product. This is equivalent to understanding when $\mathcal{P}(\mu,\nu,\bba)$ is nonempty and has integer points. 

\begin{prop}\label{prop:average_polytope}
Suppose that $\mathcal{P}(\mu,\nu, \bba^i) \neq \emptyset$  for some vectors $\bba^i$, $i=1,\ldots,4$ and $\bbc =\sum_i t_i \bba^i$ for some $t_i\in[0,1]$ with $t_1+t_2+t_3+t_4=1$. Then  $\mathcal{P}(\mu,\nu, \bbc)\neq \emptyset$.
\end{prop}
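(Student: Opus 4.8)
The plan is to show that non-emptiness of $\mathcal{P}(\mu,\nu,\bba)$ is preserved under convex combinations of $\bba$ simply by averaging the witnessing points. The mechanism is that each defining inequality of $\mathcal{P}(\mu,\nu,\bba)$ separates into a \emph{fixed} linear functional of $(x,y,z)$ on one side and a piecewise-linear function of $\bba$ on the other, with the ``direction'' of that function (concave for upper bounds, convex for lower bounds) exactly matching what Jensen's inequality needs.

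Concretely, for each $i\in\{1,2,3,4\}$ pick a point $p_i=(x_i,y_i,z_i)\in\mathcal{P}(\mu,\nu,\bba^i)$, and set $p:=\sum_i t_i p_i\in\mathbb{R}^3$ and $\bbc:=\sum_i t_i\bba^i$. I claim $p\in\mathcal{P}(\mu,\nu,\bbc)$. The verification runs through the ten families \eqref{eqn1}--\eqref{eqn10}. Each one has one of two forms: either $\langle w,(x,y,z)\rangle\le \beta(\bba)$ with $w$ independent of $\bba$ and $\beta$ a minimum of affine functions of $\bba$ (hence concave), in which case $\langle w,p\rangle=\sum_i t_i\langle w,p_i\rangle\le\sum_i t_i\beta(\bba^i)\le\beta(\bbc)$ by concavity; or $\beta(\bba)\le\langle w,(x,y,z)\rangle$ with $\beta$ a maximum of affine functions of $\bba$ (hence convex), in which case $\beta(\bbc)\le\sum_i t_i\beta(\bba^i)\le\sum_i t_i\langle w,p_i\rangle=\langle w,p\rangle$ by convexity. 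For instance: the upper bounds of \eqref{eqn1}--\eqref{eqn3} have $\beta(\bba)=\min(a_j,\nu_1)$, concave; the lower bounds have $\beta(\bba)=a_j-\min(\nu_2,\mu_2,a_j/2)=a_j+\max(-\nu_2,-\mu_2,-a_j/2)$, a maximum of affine functions of $\bba$, hence convex; the lower bounds of \eqref{eqn4}--\eqref{eqn6} have $\beta(\bba)=\max(\nu_3,a_i+a_j-\nu_1)$, convex; \eqref{eqn7} has constant $\beta$; and \eqref{eqn8}--\eqref{eqn10} have $\beta$ affine in $\bba$ (both convex and concave), so both the lower and upper bounds pass through immediately. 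Having checked all ten, $p\in\mathcal{P}(\mu,\nu,\bbc)$, so that polytope is non-empty.

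I do not expect a real obstacle here; the only thing to be careful about is the bookkeeping — confirming, for each of the ten constraint families, that its $\bba$-dependent side is a $\min$ precisely when the constraint is an upper bound on a linear functional of $(x,y,z)$ and a $\max$ precisely when it is a lower bound, so that Jensen's inequality points the right way. I would also note that the hypothesis ``$i=1,\dots,4$'' is inessential to the argument, which works verbatim for an arbitrary finite convex combination; the restriction to four vectors is only so that this proposition can subsequently be combined with Carath\'eodory's theorem in $\mathbb{R}^3$ in the proof of Theorem~\ref{thm:main}.
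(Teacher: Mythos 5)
Your proof is correct and follows essentially the same route as the paper: take a witness point $p_i$ in each $\mathcal{P}(\mu,\nu,\bba^i)$ and show that $p=\sum_i t_i p_i$ lies in $\mathcal{P}(\mu,\nu,\bbc)$. The only cosmetic difference is that the paper first splits each $\min$/$\max$ bound into several affine inequalities and then uses plain linearity, whereas you keep the bounds as piecewise-linear functions of $\bba$ and invoke concavity of $\min$ and convexity of $\max$ via Jensen --- the same underlying fact, applied in the correct directions throughout, and your remark about the role of four points (Carath\'eodory in $\mathbb{R}^3$) is also accurate.
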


\begin{proof}
The inequalities defining $\mathcal{P}(\mu,\nu,\bba)$ can be written in the form $A [x,y,z]^T \leq \bbv$ for a $31 \times 3$ matrix $A$ with entries $\{0,1,-1\}$ and vector $\bbv = B_1 [\mu_1,\mu_2]^T + B_2 [\nu_1,\nu_2,\nu_3]^T + B_3 [a_1, a_2,a_3]^T$. Here we separate the inequalities involving \(\min\) and \(\max\) functions. Specifically, the inequality \(\max\{p, q\} \leq x+y-z\) is equivalent to the two inequalities \(p \leq x+y-z\) and \(q \leq x+y-z\), and similarly for others.
 The bounds then become linear combinations of the parameters $\mu, \nu, \bba$ and can be written as a vector $\bbv$ above. 

Suppose now that $\mathcal{P}(\mu,\nu,\bba^i) \neq \emptyset$ for all $i$ and let $p_i:=(x^i,y^i,z^i) \in \mathcal{P}(\mu,\nu,\bba^i)$ be a collection of points in each, so $A p_i^T \leq B_1\mu^T + B_2 \nu^T + B_3 (\bba^i)^T$ for all $i$. Let $p:=\sum_i t_i p_i$. 
Then, since $t_i \geq 0$ and $\sum t_i =1$ we have
$$A p^T = \sum_i t_i Ap_i^T \leq \sum_i t_i ( B_1\mu^T + B_2 \nu^T + B_3 (\bba^i)^T) =B_1 \mu^T + B_2 \nu^T + B_3 (\sum_i t_i\bba^i)^T$$
So $p$ satisfies the inequalities for $\mathcal{P}(\mu,\nu,\bbc)$ and hence this polytope is nonempty.
\end{proof}

We will now show that this polytope is nonempty if and only if it has an integer point. 
\begin{thm}\label{thm:int_point}
If $\mathcal{P}(\mu,\nu,\bba) \neq \emptyset$ then it has an integer point, i.e. $\mathcal{P}(\mu,\nu,\bba)\cap \mathbb{Z}^3 \neq \emptyset$.
\end{thm}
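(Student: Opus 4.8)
The plan is to show that $\mathcal{P}(\mu,\nu,\bba)$, if nonempty, contains an integer point by exploiting the special structure of its defining inequalities. Observe that every inequality in \eqref{eqn1}--\eqref{eqn10} has the form $L \le f(x,y,z) \le U$ where $f$ is one of $x$, $y$, $z$, $x+y$, $x+z$, $y+z$, $x+y+z$, $-x+y+z$, $x-y+z$, $x+y-z$, and $L,U$ are integers (here we use that $\nu_i,\mu_i,a_i$ are integers and that $\min(\nu_2,\mu_2,a_i/2)$ appears only via a lower bound of the form $a_i - \lfloor a_i/2 \rfloor$ type expressions — more precisely, since $a_i - a_i/2 = a_i/2$ may be a half-integer, I first need to check that replacing $a_i/2$ by $\lfloor a_i/2\rfloor$ in the lower bounds of \eqref{eqn1}--\eqref{eqn3} does not change the integer points of the polytope, which it does not, because $x$ is an integer and $x \ge a_1 - a_1/2$ iff $x \ge \lceil a_1/2 \rceil = a_1 - \lfloor a_1/2\rfloor$). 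So after this reduction, $\mathcal{P}(\mu,\nu,\bba)\cap\mathbb{Z}^3$ is defined by integer bounds on the ten linear forms above.

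The key structural fact is that the constraint matrix $A$ (rows being these ten forms and their negatives) is, up to sign patterns, closely related to the incidence structure of a unimodular or near-unimodular system: the forms $x+y+z$, $-x+y+z$, $x-y+z$, $x+y-z$ together with $x,y,z,x+y,x+z,y+z$ all have $\{0,\pm1\}$ entries, and in fact this matrix is \emph{totally unimodular} — this is the classical fact that the network-like matrix whose rows are $e_i$, $e_i+e_j$, $e_1+e_2+e_3$ and $-e_i+e_j+e_k$ is TU, which one can verify directly for the $3$-variable case by checking that every square submatrix has determinant in $\{0,\pm1\}$ (there are only finitely many $2\times 2$ and $3\times 3$ cases to inspect). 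Once total unimodularity is established and the right-hand sides are integral, a standard theorem (e.g. Hoffman--Kruskal) guarantees that the polytope $\{v : Av \le \bbv\}$ has only integer vertices, and hence if it is nonempty it contains an integer point.

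The steps in order: (1) reduce the fractional lower bounds $a_i/2$ to $\lfloor a_i/2\rfloor$ without changing integer points, so all right-hand sides are integers; (2) write the full constraint matrix $A$ explicitly (it suffices to list the distinct row types $\pm x, \pm y, \pm z, \pm(x+y), \pm(x+z), \pm(y+z), \pm(x+y+z), \pm(-x+y+z), \pm(x-y+z), \pm(x+y-z)$); (3) prove $A$ is totally unimodular by the minor-checking argument or by recognizing it as a submatrix of the node-arc incidence matrix of a suitable directed graph on four nodes (think of $x,y,z$ as potentials; the forms $x+y+z$ etc. fit a network after the substitution); (4) invoke Hoffman--Kruskal to conclude all vertices are integral, so nonempty $\Rightarrow$ integer point exists.

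The main obstacle I anticipate is step (3): the presence of the forms $x+y+z$ and $-x+y+z$, $x-y+z$, $x+y-z$ simultaneously means this is \emph{not} literally a network matrix, and I expect total unimodularity might actually fail — a quick sanity check is the $3\times3$ minor built from rows $x+y$, $x+z$, $y+z$, whose determinant is $2$. So the honest plan is to \emph{not} claim TU of the whole matrix, but instead argue more carefully: either (a) show that whenever $\mathcal{P}(\mu,\nu,\bba)$ is nonempty, one can round a given rational point $p = (x,y,z)$ to a nearby integer point by a case analysis on the fractional parts of $x,y,z$ (there are essentially only a few patterns of fractional parts, and for each one checks that rounding each coordinate up or down appropriately preserves all ten inequalities, using that the slacks in the violated-direction inequalities are controlled by the structure), or (b) restrict attention to the face structure and show the relevant sub-systems that can be tight are themselves unimodular. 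I expect approach (a) — explicit rounding with a short case analysis on $(\{x\},\{y\},\{z\}) \in \{0,\tfrac12\}^3$ or more general fractional parts, leveraging that the troublesome $2$-determinant minors force the coordinates to move together — to be the cleanest route, and the bulk of the real work of the theorem will be in carrying out that rounding argument carefully for each inequality in \eqref{eqn1}--\eqref{eqn10}.
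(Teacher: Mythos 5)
Your proposal does not reach a proof, and the gap is in exactly the place you flag as ``the bulk of the real work.'' The total unimodularity route collapses (as you yourself observe via the $\det=2$ minor on the rows $x+y$, $x+z$, $y+z$), and the Hoffman--Kruskal plan has a second, independent flaw: replacing the half-integral lower bounds $a_i/2$ in \eqref{eqn1}--\eqref{eqn3} by $\lceil a_i/2\rceil$ preserves the \emph{integer} points but may shrink the \emph{real} polytope to the empty set, so even if the tightened system were TU you could not conclude anything from nonemptiness of the original $\mathcal{P}(\mu,\nu,\bba)$. Your fallback (a) is the right general direction --- the paper's proof is indeed an explicit rounding argument --- but the plan as stated misses the two ideas that make it work. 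First, the fractional parts you must handle are not confined to $\{0,\tfrac12\}$: the paper first classifies the possible vertices and shows their coordinates lie in $\tfrac13\mathbb{Z}$ or $\tfrac14\mathbb{Z}$, then shows how to move from such a vertex to a half-integer point, and only afterwards rounds half-integer points to integer points; without some such reduction, ``a case analysis on the fractional parts'' of an arbitrary rational point is not finite in any obvious way. Second, and more fundamentally, the half-integer-to-integer rounding is \emph{not} a purely structural fact about the $\{0,\pm1\}$ constraint matrix with integral bounds: in the paper's Claim analogous to your step (a), the assumption that all four candidate roundings $(x+\delta_1,y+\delta_2,z)$, $\delta_i\in\{0,1\}$, fail forces a system of tight equations whose inconsistency is derived only from the arithmetic relations among the parameters --- $\mu_1+\mu_2=n=a_1+a_2+a_3=|\nu|$, $\mu_1>\nu_1$ (the case $\mu_1\le\nu_1$ having been settled earlier by the dominant-term argument), and $\nu_1\ge\nu_2\ge\nu_3$ --- leading to contradictions such as $\nu_3\ge n/2$ or $\mu_1\ge n$. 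So the slacks are not ``controlled by the structure'' alone; a rounding argument that ignores these parameter constraints cannot succeed, and any completed proof along your line (a) would have to import them, at which point it becomes essentially the paper's proof.

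A fair summary: you correctly diagnosed that unimodularity fails and correctly guessed the genre of the true argument (round a point, case analysis), but the proposal neither supplies the vertex-denominator analysis ($\tfrac13$, $\tfrac14$ cases) nor the parameter-dependent contradictions that constitute the actual content of the theorem, so as it stands it is a plan with the decisive steps missing rather than a proof.
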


This will follow from the next two propositions which further characterize the points in the polytope.

\begin{prop}\label{prop:int_point}
Suppose that $\mathcal{P}(\mu,\nu;\bba) \neq \emptyset$. Then it has a half-integer point, i.e. $\mathcal{P}(\mu,\nu;\bba) \cap (\frac12 \mathbb{Z} )^3 \neq \emptyset$.
\end{prop}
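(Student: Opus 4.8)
The plan is to show that whenever $\mathcal{P}(\mu,\nu,\bba)$ is nonempty, one can produce a point all of whose coordinates lie in $\tfrac12\mathbb{Z}$. The natural first move is to look at the structure of the inequalities in \eqref{eqn1}--\eqref{eqn10}: every one of them is of the form $L(x,y,z) \le r$ or $L(x,y,z) \ge r$, where $L$ is one of $x$, $y$, $z$, $x+y$, $x+z$, $y+z$, $x+y+z$, $-x+y+z$, $x-y+z$, $x+y-z$, and each right-hand side $r$ is an \emph{integer} (this uses that $a_1,a_2,a_3,\nu_i,\mu_i$ are integers; the only subtlety is the bounds $a_i - \lfloor a_i/2\rfloor$, which are integers, so if $a_i$ appears we should be careful that $\min(\nu_2,\mu_2,a_i/2)$ is replaced by $\lfloor a_i/2\rfloor$ — equivalently $\lceil a_i/2\rceil \le x \le \cdots$, which is what the integrality of $\al^i_1 \ge \al^i_2$ really forces). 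So $\mathcal{P}$ is a rational polytope whose defining matrix $A$ has entries in $\{0,\pm1\}$ and whose constant vector is integral.

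With that normalization in hand, I would argue as follows. Take any vertex $p^\ast = (x^\ast,y^\ast,z^\ast)$ of $\mathcal{P}(\mu,\nu,\bba)$ (which exists and lies in $\mathcal{P}$ since the polytope is a nonempty rational polytope in $\mathbb{R}^3$; it is also bounded, as \eqref{eqn1}--\eqref{eqn3} bound each coordinate). A vertex is the unique solution of a subsystem of three linearly independent tight constraints $A' p = b'$ where $A'$ is a $3\times 3$ submatrix of $A$ with entries in $\{0,\pm1\}$ and $b'$ integral. By Cramer's rule the coordinates of $p^\ast$ are $(\det A'_i)/\det A'$ with integer numerators and $\det A' \in \mathbb{Z}\setminus\{0\}$. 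The key claim is then purely linear-algebraic: \textbf{every nonsingular $3\times 3$ matrix with entries in $\{0,\pm1\}$ arising as a row-submatrix of our constraint matrix has $|\det| \in \{1,2\}$.} Given this claim, each vertex coordinate is a half-integer, so any vertex is the desired point in $(\tfrac12\mathbb{Z})^3$.

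The main obstacle is establishing that determinant bound. A general $3\times 3$ $\{0,\pm1\}$ matrix can have determinant up to $4$ in absolute value (e.g.\ the $\pm1$ Hadamard-type examples), so one genuinely has to use the special form of the rows: each row is a $0/\pm1$ vector that is either a single $\pm1$, a sum of two coordinates, or one of the four ``$\pm$ sum-of-three'' patterns $x+y+z$, $-x+y+z$, $x-y+z$, $x+y-z$. I would enumerate the possible row types and check, case by case (or by observing that these row vectors all lie in a lattice where the relevant $3\times 3$ minors are bounded by $2$ — essentially because this constraint matrix is "almost" the incidence/cographic structure of a small graph, hence close to totally unimodular, with the failure of unimodularity confined to the odd-support rows contributing at most a factor of $2$), that $|\det|\le 2$. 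Concretely: if any row is a unit vector $\pm e_i$, expand along it and reduce to a $2\times 2$ $\{0,\pm1\}$ minor of a matrix whose rows are two-term or three-term sums, which has $|\det|\le 2$ by inspection; if no row is a unit vector, all three rows are two- or three-term sums, a finite list of $\binom{\text{(a few)}}{3}$ cases to check, and one verifies $|\det|\in\{0,1,2\}$ directly. This case analysis is the technical heart; everything else (nonemptiness $\Rightarrow$ a vertex exists, Cramer's rule, reading off half-integrality) is routine.

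Finally, I would remark that Theorem~\ref{thm:int_point} will be completed in the following proposition by upgrading a half-integer point to an integer point — presumably by showing that the half-integral vertices can be paired up or averaged, or that a suitable rounding of a half-integer point stays inside $\mathcal{P}$, exploiting again that all right-hand sides are integers so the ``slack'' at a half-integral vertex in any violated-after-rounding constraint is controlled. That step is deferred to Proposition~\ref{prop:int_point}'s sequel, so for the present statement it suffices to stop at the half-integer point produced above.
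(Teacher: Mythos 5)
Your argument hinges on the claim that every nonsingular $3\times 3$ row-submatrix of the constraint matrix has determinant $\pm1$ or $\pm2$, so that every vertex of $\mathcal{P}(\mu,\nu,\bba)$ lies in $(\tfrac12\mathbb{Z})^3$. That claim is false for this system: the rows coming from \eqref{eqn4}, \eqref{eqn5} and \eqref{eqn8} are $(1,1,0)$, $(1,0,1)$ and $(-1,1,1)$, and this matrix has determinant $-3$. With integer right-hand sides this produces vertices whose coordinates are genuinely in $\tfrac13\mathbb{Z}\setminus\tfrac12\mathbb{Z}$; the paper's own case analysis (case (c) of its first claim) identifies exactly this configuration. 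A second problem is the bound $x\ge a_1/2$ (and its analogues) coming from $\al^i_1\ge\al^i_2$: this right-hand side is honestly half-integral when $a_i$ is odd, and combined with rows of the form $(*,1,1)$, $(*,1,-1)$ it yields vertices in $\tfrac14\mathbb{Z}$ (the paper's case (b)). Your proposed fix of replacing $a_i/2$ by $\lceil a_i/2\rceil$ changes the polytope: the proposition is about the original $\mathcal{P}$, and $\mathcal{P}\neq\emptyset$ does not obviously imply the shrunken polytope is nonempty — that implication is of the same nature as the statement being proved, so it cannot be waved through.

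Consequently the conclusion ``any vertex is a half-integer point'' fails, and what is missing is precisely the substantive part of the paper's proof: after classifying vertices as lying in $\tfrac12\mathbb{Z}$, $\tfrac13\mathbb{Z}$ or $\tfrac14\mathbb{Z}$, one must show that near a $\tfrac13$- or $\tfrac14$-vertex there is still a point of $\mathcal{P}(\mu,\nu,\bba)$ with half-integer coordinates. This is not a generic rounding fact; it uses the specific inequalities, e.g.\ that the two-sided constraints \eqref{eqn8}--\eqref{eqn10} have integer bounds that are far enough apart (at least $\mu_1-\nu_3\ge n/6$) to absorb a shift of $\tfrac16$ in each coordinate, and a case analysis of which constraints can be tight simultaneously. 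Without such an argument (or a correct determinant/total-unimodularity-type bound, which does not hold here), the proof is incomplete.
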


\begin{proof}
The statement would hold if we could always guarantee the existence of a half-integer vertex. The existence of such vertices is not apparent. However, we can find points near any vertex which are half-integers. Since we assume the polytope is nonempty it would have at least one vertex, and by our proof a half-integer point near it. We proceed by first characterizing the vertices in the following claim. 

\begin{claim}\label{claim:13or14integer} If $(x,y,z)$ is a vertex of $\mathcal{P}(\mu,\nu,\bba)$, then $x,y,z \in \frac14 \mathbb{Z}$ or $x,y,z \in \frac13 \mathbb{Z}$. 
\end{claim}

\begin{proof}[Proof of claim:] The vertices correspond to extremal points, i.e. when 3 or more of the defining inequalities become equalities. If $(x,y,z)$ is a vertex then it would be a solution to an equation of the general form
 $$A \begin{bmatrix} x \\ y \\ z \end{bmatrix} = \begin{bmatrix} v_1 \\ v_2 \\ v_3 \end{bmatrix}$$
for an invertible  $3 \times 3$ matrix $A$ with entries in $\{-1,0,1\}$, which come from the form of the listed inequalities. The values $v_1,v_2,v_3$ are from the set of bounds on the left or right of inequalities \eqref{eqn1}--\eqref{eqn10}, which are almost all integers except possibly for the LHS of the first 3 equations, which could be half integers. The vertex coordinates are thus given by $A^{-1} [v_1,v_2, v_3]^T$. We consider the different types of matrices $A$. The following cases give us all possibilities up to  sign changes of rows and row/column permutations. The cases are obtained by considering how many inequalities from each of the 3 groups group I:\eqref{eqn1}-\eqref{eqn3}, II:\eqref{eqn4}-\eqref{eqn6}, III:\eqref{eqn7}-\eqref{eqn10} we have. Since the inequalities are symmetric with respect to $x,y,z$, we can make some assumptions on the form of the chosen equations. For example, if we choose at least one equation from group I, then we can chose one of them to be $x=*$, and depending on whether we choose 1, 2 or 0 equations from group II and one or two more from group I we get cases~\ref{case1} and ~\ref{case2} below. Observe that each of these cases encompasses several possibilities of group I, II or III choices.  Next, if we do not choose an equation from group I, we can have either two equations from group II (giving case~\ref{case3}), one equation from group II (case~\ref{case4}). The last two cases come from the choices of equations only from group III. 

\begin{enumerate}[(a)]

\item\label{case1} $A = \begin{bmatrix} 1 & 0 & 0 \\ * & 1 & 0 \\ * & * & 1\end{bmatrix}$, where $* \in \{0,+1,-1\}$. In that case $(x,y,z)$ is an integral linear combination of the $v_1,v_2,v_3$ and thus is in $\frac12 \mathbb{Z}$. 

\item\label{case2} $A = \begin{bmatrix} 1 & 0 & 0 \\ * & 1 & 1 \\ * & 1& -1\end{bmatrix}$, where $* \in \{0,+1,-1\}$. Observe that other $\pm 1$ combinations in the second and third row would either be equivalent up to sign change or result in $\det A=0$.  In this case, if $x \in \mathbb{Z}$ we have $y,z \in \frac12 \mathbb{Z}$, otherwise if $x =\frac{a_1}{2}$ then $y,z \in \frac14 \mathbb{Z}$. 

\item\label{case3} $A = \begin{bmatrix} 1 & 1 & 0 \\ 1 & 0 & 1 \\  *  & \pm 1 & \pm 1\end{bmatrix}$ coming from at least two equations from \eqref{eqn4}-\eqref{eqn6} and one from \eqref{eqn7}-\eqref{eqn10}. Then $v_1,v_2,v_3 \in \mathbb{Z}$ and in most cases the solutions are integral linear combinations of $v_1, v_2, v_3$. In the case where the last line is $[-1,1,1]$ we can have $\frac13 \mathbb{Z}$. In the case when the last line is $[0,1,1]$ then the solutions are in $\frac12 \mathbb{Z}$.

\item\label{case4} $A = \begin{bmatrix} 1 & 1 & 0 \\ 1 & \pm 1 &  \pm 1 \\  1  & \pm 1 & \pm 1\end{bmatrix}$ where we have one equation from \eqref{eqn4}-\eqref{eqn6} and 2 equations from \eqref{eqn7}-\eqref{eqn10}. Then $v_i \in \mathbb{Z}$, and $[y,z]$ is a solution to $\begin{bmatrix} \epsilon_1-1 & \epsilon_2 \\ \epsilon_3-1 & \epsilon_4 \end{bmatrix} [y,z]^T = [ v_2-v_1, v_3-v_1]^T$, where $\epsilon_i \in \{-1,+1\}$. For this matrix to be invertible we must have $\epsilon_1, \epsilon_3$ not both equal to $1$. If $\epsilon_1 \neq \epsilon_3$, then the determinant is $\pm 2$ and $y,z \in \frac12 \mathbb{Z}$. If $\epsilon_1=\epsilon_3=-1$, then $z \in \frac12 \mathbb{Z}$ and $y \in \frac14 \mathbb{Z}$.

\item\label{case5} $ A =   \begin{bmatrix} 1 & 1 & 1 \\ 1 & 1 & -1 \\  1  & -1 & 1\end{bmatrix}$, taking one equation from \eqref{eqn7} and two from \eqref{eqn8}-\eqref{eqn10}. Then $x,y,z \in \frac12 \mathbb{Z}$. 

\item\label{case6} $A =   \begin{bmatrix} -1 & 1 & 1 \\ 1 & 1 & -1 \\  1  & -1 & 1\end{bmatrix}$, then $x,y,z \in \frac12 \mathbb{Z}$.

\end{enumerate}

\end{proof}

\begin{claim}
Suppose that $\mathcal{P}(\mu,\nu,\bba)$ has a vertex with $x,y,z \in \frac13 \mathbb{Z}$. Then it has a point in $\frac12 \mathbb{Z}$.
\end{claim}

\begin{proof}[Proof of claim:] From the proof of Claim \ref{claim:13or14integer}, we see that a nonintegral vertex in $\frac13 \mathbb{Z}$ can occur only from equations \eqref{eqn4}, \eqref{eqn5} and \eqref{eqn8} (up to permutation of $x,y,z$). Let $v_1 = \max\{ \nu_3, a_1+a_2 -\nu_1\}$, $v_2 = \max\{ \nu_3, a_1+a_3-\nu_1\}$ and $v_3 = \max\{\nu_2,\mu_2\} -a_1$ or $v_3 = \nu_1+\nu_2-a_1$. The vertex is either $(x,y,z) = (x' + \frac23, y'+\frac13, z'+\frac13) $ or $(x' + \frac13, y'+\frac23, z'+\frac23)$ for some $x',y',z'\in \mathbb{Z}$. 

In the first case, we have that $(x'+1,y',z')$ and $(x'+\frac 12, y'+\frac12, z'+\frac12)$ still satisfy inequalities \eqref{eqn1}-\eqref{eqn6} as the bounds are integers or half-integers. We have $(x'+1)+y'+z' = x+y+z -\frac13 \geq \mu_1$ since  $\lfloor x+y+z  \rfloor \geq \mu_1$ and similarly $(x'+\frac 12, y'+\frac12, z'+\frac12)$ satisfies inequality (7). Moreover, $-(x'+1) + y'+z' = -(x+1/3)+y-1/3+z-1/3 = -x+y+z -1 \leq \nu_1 + \nu_2 -a_1$,
$(x'+1)-y'+z' = (x+\frac13) -y-z =\lfloor x-y-z \rfloor$ and satisfies inequality \eqref{eqn9} as both sides are integers, similarly with inequality \eqref{eqn10}. If $(x'+1,y',z') \in \mathcal{P}(\mu,\nu,\bba)$, we are done. Now suppose that $(x'+1,y',z') \not \in \mathcal{P}(\mu,\nu,\bba)$. Then $-(x'+1)+y'+z' < \max\{\nu_2,\mu_2\} -a_1$, so $-x' +y'+z' = \max\{\nu_2,\mu_2\} -a_1$.
The point $(x'+\frac12, y'+\frac12, z'+\frac12)= (x,y,z) + (\frac16, -\frac16,-\frac16)$ still satisfies \eqref{eqn9} and \eqref{eqn10} since the bounds are integral and 
$(x'+\frac12) - (y'+\frac12) + (z'+\frac12) = x'-y'+z' +\frac12 = \lfloor x-y+z\rfloor + \frac12$. For inequality \eqref{eqn8}, we have
$$-(x'+\frac12) + (y'+\frac12) + (z'+\frac12) = -x'+y'+z' +\frac12 = -x+y+z +\frac12 =  \max\{\nu_2,\mu_2\} -a_1 + \frac12$$ 
according to the implication from the assumption that $(x'+1,y',z') \not \in \mathcal{P}(\mu,\nu,\bba)$. Since the upper and lower bounds of these inequalities are at least $\nu_1 + \nu_2 -\max\{\nu_1,\mu_2\} \geq \mu_1 - \nu_3 \geq n/6$ apart, we have that $(x'+\frac12,y'+\frac12,z'+\frac12) \in \mathcal{P}(\mu,\nu,\bba)$. 

In the second case, $(x,y,z) = (x' + \frac13, y'+\frac23, z'+\frac23)$ we consider $(x', y'+1,z'+1)$ and $(x'+\frac12, y'+\frac12, z'+\frac12)$ and performing an analogous analysis we see that one of these points is in $\mathcal{P}(\mu,\nu,\bba)$.
\end{proof}

\begin{claim} Suppose that $\mathcal{P}(\mu,\nu,\bba)$ has a vertex with $x,y,z \in \frac14 \mathbb{Z}$. Then it has a point in $\frac12 \mathbb{Z}$.
\end{claim}

\begin{proof}[Proof of claim:] From the proof of Claim 1, we observe that a nonintegral vertex in $\frac14 \mathbb{Z}$ can occur from case~\ref{case2} or ~\ref{case4}. In case~\ref{case2}, a nonintegral vertex in $\frac14 \mathbb{Z}$ can occur only when $x = \frac{a_1}{2}$ from equations \eqref{eqn1}, \eqref{eqn6} and \eqref{eqn9} (up to permutation of $x,y,z$). The vertex can be expressed as either $(x,y,z) = (x'+\frac{1}{2}, y'+\frac{1}{4}, z'+\frac{3}{4})$ or $(x,y,z) = (x'+\frac{1}{2}, y'+\frac{3}{4}, z'+\frac{1}{4})$ for some $x',y',z' \in \mathbb{Z}$. We have that $(x'+\frac{1}{2}, y'+\frac{1}{2},z'+1)$  and $(x'+\frac{1}{2}, y'+1,z'+\frac12)$  will satisfy all inequalities \eqref{eqn1}-\eqref{eqn10} in the two cases, respectively, as the bounds are either integers or half-integers.

In case~\ref{case4}, a nonintegral vertex in $\frac14 \mathbb{Z}$ can only result from equations \eqref{eqn4}, \eqref{eqn8} and \eqref{eqn9} (up to permutation of $x,y,z$). Such a vertex can be represented as either $(x,y,z) = (x'+\frac{1}{4}, y'+\frac{3}{4}, z'+\frac12)$ or $(x,y,z) = (x'+\frac{3}{4}, y'+\frac{1}{4}, z'+\frac{1}{2})$ for some $x',y',z' \in \mathbb{Z}$. Then we have that $(x'+\frac{1}{2}, y'+1,z'+\frac{1}{2})$  and $(x'+1, y'+\frac12,z'+\frac12)$  will satisfy all inequalities \eqref{eqn1}-\eqref{eqn10} in the two cases, respectively, as the bounds are integers or half-integers.
\end{proof}
In all other cases, the vertices are already in $\mathbb{Z}$ or $\frac12 \mathbb{Z}$, so we always have a half-integer point. 
\end{proof}

\begin{prop}\label{prop:int_point2}
Suppose that $\mathcal{P}(\mu,\nu,\bba) \cap (\frac12 \mathbb{Z})^3 \neq \emptyset$, then it has an integer point, i.e. $\mathcal{P}(\mu,\nu,\bba) \cap \mathbb{Z}^3 \neq \emptyset$.
\end{prop}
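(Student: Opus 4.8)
The plan is to start from a half-integer point $(x,y,z) \in \mathcal{P}(\mu,\nu,\bba)$ whose coordinates are not all integers, and perturb it by a vector in $\{-\tfrac12, 0, +\tfrac12\}^3$ to land on an integer point still inside the polytope. First I would observe that the inequalities~\eqref{eqn1}--\eqref{eqn10} all have \emph{integer} bounds, with the single exception of the lower bounds $a_i - \min(\nu_2,\mu_2,\tfrac{a_i}{2})$ in~\eqref{eqn1}--\eqref{eqn3}, which are half-integers exactly when the corresponding $a_i$ is odd and $\tfrac{a_i}{2}$ achieves the minimum; in that case the bound is $\lceil a_i/2\rceil$, still an integer, so in fact \emph{every} bound appearing is an integer. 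This means that for any constraint $\pm x \pm y \pm z \le c$ (or its lower-bound companion), if $(x,y,z)$ is half-integral then both sides are half-integers, and slack of at least $\tfrac12$ is equivalent to slack of at least $0$ after shifting; moreover the two-sided constraints~\eqref{eqn8}--\eqref{eqn10} have width $\nu_1+\nu_2-\max\{\nu_2,\mu_2\}$, which one checks is $\ge \mu_1-\nu_3 \ge n/6 > 1$ under the standing assumption $\mu_1 > \nu_1$ (since $\nu_3 \le \nu_1 < \mu_1$ forces $\mu_1 - \nu_3 \ge 1$, and in fact $\ge n/6$ as used earlier in the excerpt), so those two-sided constraints are never the obstruction once we are within $1$ of a feasible point.

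Next I would set $(x,y,z) = (x'+\tfrac{\delta_1}{2}, y'+\tfrac{\delta_2}{2}, z'+\tfrac{\delta_3}{2})$ with $x',y',z'\in\mathbb{Z}$ and $\delta_i \in \{0,1\}$, and consider the two candidate integer roundings $p^+ = (x'+\delta_1, y'+\delta_2, z'+\delta_3)$ (round each half-coordinate up) and $p^- = (x', y', z')$ (round each down). The key point is that $p^+$ and $p^-$ differ by the integer vector $\delta = (\delta_1,\delta_2,\delta_3)$, and the original point is their midpoint. For each defining inequality $\ell(p)$, the value $\ell(p^{\pm})$ differs from $\ell(x,y,z)$ by $\pm\tfrac12 \ell(\delta) \in \{0, \pm\tfrac12, \pm 1, \pm\tfrac32\}$. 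I would argue inequality-class by inequality-class: for the single-variable bounds~\eqref{eqn1}--\eqref{eqn3} with integer endpoints, a half-integer interior point has slack $\ge\tfrac12$ on each side, so both $p^+$ and $p^-$ stay inside (the coordinate moves by at most $\tfrac12$); for the sum constraints~\eqref{eqn4}--\eqref{eqn7} one needs the value to move in the safe direction, and for the signed constraints~\eqref{eqn8}--\eqref{eqn10} the width bound handles it. The only genuinely delicate constraints are the lower bounds in~\eqref{eqn4}--\eqref{eqn7}, where rounding down could violate $x+y\ge\nu_3$ etc.: here I would note that if $p^-$ fails such a lower bound then the original half-integer value of $x+y$ equals that bound exactly (by integrality of the bound and half-integrality of $x+y$, the only way to be within $\tfrac12$ below is to be exactly at an integer $=$ the bound, in which case actually $\delta_1=\delta_2$ and $x+y$ is already an integer, so $p^-$ does not decrease it) — so a short parity analysis shows at least one of $p^+, p^-$ works for each constraint simultaneously. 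The cleanest packaging is: define $p^* $ coordinatewise by rounding $x$ in whichever direction keeps the group-I constraint~\eqref{eqn1} feasible (possible since the interval has integer endpoints and contains the half-integer $x$, hence contains an integer within $\tfrac12$), and similarly for $y,z$; then check that this choice automatically respects~\eqref{eqn4}--\eqref{eqn10} using the large width of~\eqref{eqn8}--\eqref{eqn10} and a case check on the signs in~\eqref{eqn4}--\eqref{eqn7}.

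The main obstacle I anticipate is exactly the interaction between the lower bounds in~\eqref{eqn4}--\eqref{eqn6} (which push $x+y, x+z, y+z$ \emph{up}) and the upper bounds in~\eqref{eqn1}--\eqref{eqn3} (which push $x,y,z$ \emph{down}) — these pull the rounding in opposite directions, and one must verify no single constraint forces both $\delta_i = 0$ and $\delta_i = 1$ for the same $i$. I expect this to come down to a finite parity case analysis on the residues $(a_1 \bmod 2, a_2 \bmod 2, a_3 \bmod 2)$ together with which of $\nu_2, \mu_2, \lceil a_i/2\rceil$ is active in each bound, much in the style of the three preceding claims in the proof of Proposition~\ref{prop:int_point}; since all bounds are integers, the parity of $a_1+a_2+a_3$ controls whether $x+y+z$ is an integer or a half-integer at the given point, which in turn dictates the feasible rounding for constraint~\eqref{eqn7}. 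Once those cases are dispatched, the integer point is produced directly and the proof closes.
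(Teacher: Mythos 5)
There is a genuine gap: your proposal is a rounding \emph{plan}, but the step that actually constitutes the proof is never carried out, and the shortcuts you use to avoid it do not hold. The heart of the matter is showing that among the candidate roundings $(x+\delta_1,y+\delta_2,z+\delta_3)$, $\delta_i\in\{0,1\}$, at least one satisfies \emph{all} of \eqref{eqn1}--\eqref{eqn10} simultaneously; in the paper this is Claim~\ref{claim_two_halfs}, proved by contradiction through a system of forced equalities (the analogues of \eqref{eq:x1y1z}--\eqref{eq:xyz-5}) and arithmetic contradictions using $n=a_1+a_2+a_3=\nu_1+\nu_2+\nu_3=\mu_1+\mu_2$, $\mu_1>\nu_1\ge n/3$, $\mu_1\ge n/2$. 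You defer exactly this to an unexecuted ``finite parity case analysis,'' so the crux is missing. Worse, two of the claims meant to trivialize parts of it are incorrect. First, it is not true that ``every bound appearing is an integer'': the lower bounds in \eqref{eqn1}--\eqref{eqn3} equal $a_i/2$ when $a_i$ is odd and $a_i/2$ attains the minimum, and replacing $a_i/2$ by $\lceil a_i/2\rceil$ changes the polytope, so the given half-integer point (which may sit exactly at $x=a_i/2$) need not satisfy the modified bound; consequently the assertion that a half-integral point has slack $\ge\frac12$ in group I, so that both roundings stay feasible, fails. Indeed the cases $x+\frac12=\frac{a_1}2$, $y+\frac12=\frac{a_2}2$ are precisely Case~1 of the paper's argument and need separate treatment. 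Second, the claim that the two-sided constraints \eqref{eqn8}--\eqref{eqn10} ``are never the obstruction'' because their width is large is unjustified: a large width does not prevent the point from lying within $\frac12$ of (or exactly on) one of the integer bounds, and a rounding moves the linear forms $\pm x\pm y\pm z$ by up to $\frac32$ in the wrong direction. In the paper's proof these constraints are exactly where the tight equalities arise (equations \eqref{eq:x1y1z}, \eqref{eq:x1yz-1}, \eqref{eq:x1yz-2}, \eqref{eq:xy1z-1}, \eqref{eq:xy1z-2}), so dismissing them removes the actual difficulty rather than resolving it.

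Two further signs the plan would not close as written: the parity remark for the lower bounds in \eqref{eqn4}--\eqref{eqn7} is garbled (if both rounded coordinates are half-integers then $x+y$ is an integer and rounding down decreases it by $1$, so ``$p^-$ does not decrease it'' is false), and the ``main obstacle'' you anticipate — a conflict between the lower bounds \eqref{eqn4}--\eqref{eqn6} and the upper bounds in \eqref{eqn1}--\eqref{eqn3} — is not the real one, since those upper bounds are integers and rounding a half-integer coordinate up can never violate them; the genuine tension is between the round-up pressure from \eqref{eqn1}--\eqref{eqn7} and the constraints \eqref{eqn8}--\eqref{eqn10} you set aside. The overall strategy (perturb a half-integer point by a vector in $\{0,\pm\frac12\}^3$, exploiting integrality of most bounds) is the same as the paper's, and the paper additionally first reduces to exactly two half-integer coordinates via Claim~\ref{claim_odd_halfs}; but without executing the case analysis that shows some coordinated rounding survives \eqref{eqn8}--\eqref{eqn10}, the proposal does not prove the proposition.
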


\begin{proof}
Suppose that $(u+\epsilon_1, v+\epsilon_2, w+\epsilon_3) \in \mathcal{P}(\mu,\nu,\bba)$ for some $\epsilon_i\in\{0,\frac{1}{2}\}$ and $u,v,w \in \mathbb{Z}$. We will show that $(u + \delta_1, v+\delta_2, w+\delta_3) \in \mathcal{P}(\mu,\nu,\bba)$ for some $\delta_i \in \{0,1\}.$

\begin{claim}\label{claim_odd_halfs}
If 1 or 3 components in $(\epsilon_1, \epsilon_2,\epsilon_3)$ are equal to $\frac{1}{2}$, then we have a point in $\mathcal{P}(\mu,\nu,\bba)$ with 0 or 2 of the components equal to $\frac{1}{2}$.
\end{claim}
\begin{proof}[Proof of Claim]
    Suppose that one component of $(\epsilon_1, \epsilon_2,\epsilon_3)$ is $\frac{1}{2}$, say $\epsilon_1=\frac12$.  Then, other than inequality~\eqref{eqn1}, the inequalities involving the variable $x$  are of the form $a \le m + \frac{1}{2} \le b$, where $a$, $b$, $m$ are integers, which hold if and only if both $a \le m \le b$ and $a\le m + 1 \le b$ hold. Moreover, inequality~\eqref{eqn1} holds for $u+1$ as the upper bound is an integer. It follows that if $(u,v,w) + (\frac12,0,0) \in \mathcal{P}(\mu,\nu,\bba)$ then $(u+1,v,w) \in \mathcal{P}(\mu,\nu,\bba).$ 

    If all three components in $(\epsilon_1, \epsilon_2,\epsilon_3)$ are equal to $\frac{1}{2}$, we can increase one value by $\frac12$, to get $(u+1,v+\frac12,w+\frac12)$. Such a point will not be in the polytope only if it no longer satisfies some of the inequalities \eqref{eqn8}-\eqref{eqn10}. However, the expressions are of the form $a \leq -x+y+z = m+\frac12 \leq b$ with $a,b,m\in \mathbb{Z}$, and so changing a value by $\frac12$ will keep the expressions within $[a,b]$. So $(u+1,w+\frac12, w +\frac12) \in \mathcal{P}(\mu,\nu,\bba)$. 
\end{proof}

\begin{claim}\label{claim_two_halfs} If $(x+ \frac{1}{2},y+\frac{1}{2},z) \in \mathcal{P}(\mu,\nu,\bba)$, then there exists $(\delta_1,\delta_2) \in \{0,1\}^2$ such that $(x + \delta_1, y+\delta_2, z) \in \mathcal{P}(\mu,\nu,\bba)$.
\end{claim}
\begin{proof}[Proof of Claim~\ref{claim_two_halfs}]
    
Let $(x+ \frac{1}{2},y+\frac{1}{2},z) \in \mathcal{P}(\mu,\nu,\bba)$, so it satisfies all inequalities~\eqref{eqn1}-\eqref{eqn10}. Suppose, to the contrary, that $(x + \delta_1, y+\delta_2, z) \notin \mathcal{P}(\mu,\nu,\bba)$ for every $(\delta_1,\delta_2) \in \{0,1\}^2.$

If $(x+1, y+1, z) \notin \mathcal{P}(\mu,\nu,\bba)$, then, since it satisfies all inequalities except possibly the upper bound in inequality~\eqref{eqn10}, we should have $x+1+y+1-z \nleq \nu_1+\nu_2-a_3$. Since  $x+\frac{1}{2}+y+\frac{1}{2}-z \leq \nu_1+\nu_2-a_3$, we have
\begin{equation} \label{eq:x1y1z}
    x+y-z = \nu_1+\nu_2-a_3-1.
\end{equation}

The point $(x+1,y,z)$ satisfies all inequalities \eqref{eqn1},\eqref{eqn3}-\eqref{eqn7}, \eqref{eqn10}, so if $(x+1, y, z) \notin \mathcal{P}$ one of the inequalities \eqref{eqn8} or \eqref{eqn9} is not satisfied. Then either $x+1-y+z \nleq \nu_1+\nu_2-a_2$ or  $-x-1-y+z \ngeq \max(\nu_2,\mu_2)-a_1$. Since $(x+\frac12, y+\frac12,z)$ satisfy these inequalities, this implies that one of the following equations holds: 
\begin{equation} \label{eq:x1yz-1}
    x-y+z = \nu_1+\nu_2-a_2,
\end{equation}
\begin{equation} \label{eq:x1yz-2}
    -x+y+z = \max(\nu_2,\mu_2)-a_1,
\end{equation}
\begin{equation} \label{eq:x1yz-3}
    y +\frac12= \frac{a_2}{2}
\end{equation}

Similarly, if $(x, y+1, z) \notin \mathcal{P}(\mu,\nu,\bba)$, then one of the following equations holds: \begin{equation} \label{eq:xy1z-1}
    x-y+z = \max(\nu_2,\mu_2)-a_2
\end{equation} \begin{equation} \label{eq:xy1z-2}
    -x+y+z = \nu_1+\nu_2-a_1,
\end{equation}
\begin{equation} \label{eq:xy1z-3}
    x +\frac12= \frac{a_1}{2}.
\end{equation}

If $(x,y,z)\notin \mathcal{P}(\mu,\nu,\bba)$, then $x+y\ngeq \max(\nu_3,a_1+a_2-\nu_1)$, $x+y+z \ngeq \nu_1$ or $x+y-z \ngeq \max(\nu_2,\mu_2)-a_3$, which implies that one of the following equations holds:
\begin{equation}\label{eq:xyz-1}
x+y = \max(\nu_3,a_1+a_2-\nu_1)-1,
\end{equation}
\begin{equation}\label{eq:xyz-2}
x+y-z = \max(\nu_2,\mu_2)-a_3-1,
\end{equation}
\begin{equation}\label{eq:xyz-3}
x+y+z = \mu_1-1,
\end{equation}
\begin{equation} \label{eq:xyz-4}
    x +\frac12= \frac{a_1}{2},
\end{equation}
\begin{equation} \label{eq:xyz-5}
    y+\frac12 = \frac{a_2}{2}.
\end{equation}

We now consider the possible combinations of the above equalities, at least one from each group, and will aim to reach a contradiction in each case. We have that equation \eqref{eq:x1y1z} must hold in every case.

{\bf Case 1}: At least one of $x+\frac12 = \frac{a_1}{2}$ or $y+\frac12 = \frac{a_2}{2}$ holds. We have the following subcases.

\begin{itemize}

\item[Case 1.1:] If $x+\frac12 = \frac{a_1}{2}$ and $y+\frac12 = \frac{a_2}{2}$ and equation \eqref{eq:x1y1z} holds, it follows that $z = \nu_3-\frac{a_1+a_2}{2}$, but then for equation~\eqref{eqn8} we would have $-(x+\frac12)+(y+\frac12)+z = \nu_3-a_1 \leq \max(\nu_2,\mu_2)-a_1.$ 
Hence we must have $\nu_3 = \max\{\nu_2,\mu_2\}$ and thus $\mu_2 \leq \nu_2=\nu_3$. Then the LHS of equation~\eqref{eqn3} gives 
$\frac{a_3}{2} \leq a_3 - \min\{\mu_2,\frac{a_3}{2}\} \leq z= \nu_3 - \frac{a_1+a_2}{2}$, so $\nu_3 \geq \frac{a_1+a_2+a_3}{2}=n/2$, which is not possible.

\item[Case 1.2:] If $x+\frac12 = \frac{a_1}{2}$, but $y+\frac12 > \frac{a_2}{2}$ and equation \eqref{eq:x1yz-1} holds, then $z = \nu_1+\nu_2-a_2+y-x > \nu_1+\nu_2-\frac{a_1+a_2}{2}$. It follows that $-x+y+z > \nu_1+\nu_2-a_1$, contradicting inequality \eqref{eqn8} for $(x+1/2,y+1/2,z) \in \mathcal{P}$. 

\item[Case 1.3:] If $x+\frac12 = \frac{a_1}{2}$ and $y+\frac12 > \frac{a_2}{2}$ and equation \eqref{eq:x1yz-2} holds, then $z < \max (\nu_2,\mu_2)-\frac{a_1+a_2}{2}$. However, it implies that $x-y+z < \max(\nu_2,\mu_2) - a_2 $, contradicting inequality \eqref{eqn9} for $(x+1/2,y+1/2,z)\in \mathcal{P}$. 

\item[Case 1.4:] If $x+\frac12 > \frac{a_1}{2}$ and $y+\frac12 =\frac{a_2}{2}$, by a completely analogously argument, similar contradictions are reached.
\end{itemize}

\smallskip

{\bf Case 2}: We can now assume that $x \geq \frac{a_1}{2}$ and $y \geq \frac{a_2}{2}$, so neither of equations \eqref{eq:x1yz-3},\eqref{eq:xy1z-3}, \eqref{eq:xyz-4}, and \eqref{eq:xyz-5} are satisfied.

Since $\max(\nu_2, \mu_2) < \nu_1 + \nu_2$, it is not possible for equations \eqref{eq:x1yz-1} and \eqref{eq:xy1z-1} to both be satisfied simultaneously. Similarly, equations \eqref{eq:x1yz-2} and \eqref{eq:xy1z-2}, \eqref{eq:x1y1z} and \eqref{eq:xyz-2} cannot both be satisfied simultaneously. Thus, it can be reduced to two possibilities:
\begin{itemize}
    \item[Case 2.1:] When equations \eqref{eq:x1y1z},  \eqref{eq:x1yz-1}, \eqref{eq:xy1z-2} hold, the system of equations has solution 
    \[\begin{cases}
        x = \nu_1+\nu_2-\frac{a_2+a_3+1}{2}\\
        y = \nu_1+\nu_2-\frac{a_1+a_3+1}{2}\\
        z = \nu_1+\nu_2-\frac{a_1+a_2}{2}\\
    \end{cases}.\]
    If equation \eqref{eq:xyz-1} holds, we have
    \[x+y = 2(\nu_1+\nu_2)-\frac{a_1+a_2}{2}-a_3-1=\max(\nu_3,a_1+a_2-\nu_1)-1.\]
    If $\nu_3 \ge a_1+a_2-\nu_1$, using $a_1+a_2+a_3=n=\nu_1+\nu_2+\nu_3$,  we have 
    \begin{align*}
        2(\nu_1+\nu_2)-\frac{a_1+a_2}{2}-a_3-1 &=\nu_3-1\\
        a_3 &= 3(\nu_1+\nu_2-\nu_3) > n,
    \end{align*}
    which is impossible.
    
    If $\nu_3 < a_1+a_2-\nu_1$, then we have 
    \begin{align*}
        2(\nu_1+\nu_2)-\frac{a_1+a_2}{2}-a_3-1 &=a_1+a_2-\nu_1-1\\
        a_1+a_2 &= 2(2\nu_1+\nu_2-\nu_3)> n,
    \end{align*}
    which is also impossible.

    If equation \eqref{eq:xyz-3} holds, we have
    \[x+y+z =3(\nu_1+\nu_2)-(a_1+a_2+a_3)-1=3(\nu_1+\nu_2)-n-1 = \mu_1-1,\]
    from which it follows $\mu_1 = 2(\nu_1+\nu_2)-\nu_3 \geq n$, contradicting our assumption.
    \item[Case 2.2:] When equations \eqref{eq:x1y1z},  \eqref{eq:x1yz-2}, \eqref{eq:xy1z-1} hold, the system of equations has solution 
    \[\begin{cases}
        x = \frac{\nu_1+\nu_2+\max(\nu_2,\mu_2)-a_2-a_3-1}{2}\\
        y = \frac{\nu_1+\nu_2+\max(\nu_2,\mu_2)-a_1-a_3-1}{2}\\
        z = \max(\nu_2,\mu_2)-\frac{a_1+a_2}{2}
    \end{cases}.\]
    By inequality \eqref{eqn3}, we have  $z = \max(\nu_2,\mu_2)-\frac{a_1+a_2}{2} \ge \frac{a_3}{2}$, which implies that $\max(\nu_2,\mu_2) \ge \frac{n}{2}$ and hence $\mu_1 = \mu_2 = \frac{n}{2}.$

    If equation \eqref{eq:xyz-1} holds, we have
    \[x+y = \nu_1+\nu_2+\max(\nu_2,\mu_2)-\frac{a_1+a_2}{2}-a_3-1=\max(\nu_3,a_1+a_2-\nu_1)-1,\]
    which gives that 
$\nu_1+\nu_2+\mu_2-\frac{n}{2}-\frac{a_3}{2} = \nu_1+\nu_2-\frac{a_3}{2} =  \max(\nu_3,a_1+a_2-\nu_1).
$

\indent If $a_1+a_2-\nu_1 \ge \nu_3$, then $n \geq a_1+a_2+\frac{a_3}{2} = 2\nu_1+\nu_2 \geq n$.
Then we must have $a_3=0$, $\nu_1=\nu_2=\nu_3=n/3$ and $\mu_1=\mu_2=n/2$. Then  $x+\frac12 = \frac12 (\frac76 n - a_2)$, $y+\frac12 = \frac12(\frac76 n -a_1)$ and $x+y+1-z = \frac16 n < \max\{\nu_2,\mu_2\}$ and this point would not satisfy inequality~\eqref{eqn10} and be in $\mathcal{P}(\mu,\nu,\bba)$, so this is also impossible.

\indent If $a_1+a_2-\nu_1 < \nu_3$, then we have $\nu_1+\nu_2-\frac{a_3}{2} = \nu_3$, which implies that $v_3 = \frac{n}{2}-\frac{a_3}{4}.$ Substituting in the values, we can rewrite the solution as  \[\begin{cases}
        x = \frac{1}{2}(a_1+\frac{a_3}{4}-1)\\
        y = \frac{1}{2}(a_2+\frac{a_3}{4}-1)\\
        z = \frac{a_3}{2}
    \end{cases}.\] 
    
    Since $\frac{n}{3} \ge \nu_3 = \frac{n}{2}-\frac{a_3}{4}$, we can derive that $\frac{3}{2}a_3\ge n$ and $a_1+a_2=n-a_3  \le \frac{a_3}{2}$. The upper bounds in inequalities \eqref{eqn1}, \eqref{eqn2} for the integers $x+1,y+1$ hold since they hold for $x+\frac12, y+\frac12$. Thus $x+1 \leq a_1$, implying $\frac{a_3}{4}+1 \leq a_1$, and similarly $\frac{a_3}{4} + 1 \leq a_2$. Then $a_1 +a_2 \geq \frac{a_3}{2}+2$ violating the above conclusion. 

    If equation \eqref{eq:xyz-3} holds, we have
    \[x+y+z =2\max(\nu_2,\mu_2)+\nu_1+\nu_2-n-1 = 2\mu_2 -\nu_3-1= \mu_1-1,\]
    which implies that $\mu_1 = \nu_3$, reaching another contradiction.
\end{itemize}

In all cases, we reached contradictions, so the claim is true and there is an integer point.
\end{proof}

From Claim~\ref{claim_odd_halfs} we have that a point in $\mathcal{P}(\mu,\nu,\bba)\cap (\frac12 \mathbb{Z})^3$ implies that there is either an integer point, or a point with one integer and two half-integer entries. Claim~\ref{claim_two_halfs} shows that then there is always an integer point.
\end{proof}
\begin{proof}[Proof of Theorem~\ref{thm:int_point}]
Propositions~\ref{prop:int_point} and~\ref{prop:int_point2} together show that if the polytope is nonempty then it has an integer point. 
\end{proof}

\begin{proof}[Proof of Theorem~\ref{thm:main}]
Let $x_1^{a^i_1}x_2^{a^i_2}x_3^{a^i_3}$, for $i=1,\ldots$ be monomials appearing in $s_\mu \ast s_\nu (x_1,x_2,x_3)$  with nonzero coefficients. By Proposition~\ref{prop:2-3row2} we have that $\mathcal{P}(\mu,\nu;\bba^i) \cap \mathbb{Z}^3 \neq \emptyset$. Suppose that $(c_1,c_2,c_3)$ is in the convex hull of $\{ \bba^i \}_i$, so $\bbc = \sum_i t_i \bba^i$ for some $t_i \in [0,1]$ with $t_1+t_2+\cdots=1$. By Proposition~\ref{prop:average_polytope} we have that $\mathcal{P}(\mu,\nu,\bbc) \neq \emptyset$. Then if $c_i \in \mathbb{Z}$ by Theorem~\ref{thm:int_point} we have $\mathcal{P}(\mu,\nu;\bbc) \cap \mathbb{Z}^3 \neq \emptyset$ and thus ${\bf x}^\bbc$ appears as a monomial in $s_\mu \ast s_\nu$. By the~\eqref{eq:snp} characterization then $s_\mu \ast s_\nu(x_1,x_2,x_3)$ has a saturated Newton polytope. 
\end{proof}

\section{Positivity of Kronecker coefficients}\label{s:pos}

First, we will discuss the limiting case of the SNP property. 

\begin{proof}[Proof of Theorem~\ref{thm:limit}]
By Caratheodory's theorem, it is enough to show that if every point which is a convex combination of $k+1$ points from our set  is contained in the set, then the set is convex.

    Suppose that $\al^1, \al^2, \cdots, \al^{k+1} \in \bigcup_{p=1}^\infty \frac{1}{p} M_k({p\la}, {p\mu})$, where we abbreviate the notation $M_k(p\la,p\mu):=M_k(s_{p\la}\ast s_{p\mu})$. Then $\al^i \in \frac{1}{p_i} M_k(p_i \la, p_i \mu)$ for some $p_i$. First, let $p = lcm(p_1,\ldots,p_k)$. Since $p_i \al^i \in M_k(p_i \la, p_i \mu)$ there must be a $\be^i \succ p_i \al^i$ with $g(p_i \la, p_i \mu, \be^i) >0$. By the semigroup property then $g( p\la, p\mu, \frac{p}{p_i} \be^i)>0$, so that Kronecker product contains $s_{p/p_i \be^i}$ and all of its monomials. Since $\gamma^i:= p/p_i \be^i \succ p/p_i (p_i \al^i) = p\al^i$ then it contains $\al^i$ and $\al^i \in \frac{1}{p} M_k(p\al, p\mu)$ for all $i$ with $p\al^i \prec \gamma^i$ and $g(p\la,p\mu, \gamma^i)>0$. 
    
    Suppose that $\theta \in \mathbb{Q}^k$ is a convex combination of $\al^1,\ldots,\al^{k+1}$, i.e. $\theta = \sum_i t_i \al^i$ for some $t_i \in [0,1]$ with $t_1+t_2 +\cdots =1$. By Caratheodory's theorem $k+1$ points suffice.  Moreover, we can assume that the numbers $t_i \in \mathbb{Q}$, because the vector $(t_1,\ldots,t_{k+1})$ is a solution (not necessarily unique) to a system of $k$ linear equations with rational coefficients coming from $\al^i$, $\theta$, and the equation $\sum_i t_i=1$. We can then write $t_i = \frac{q_i}{q}$ for $q_i \in \mathbb{Z}$ and the same $q\in \mathbb{Z}$.
    By the semigroup property then 
    $g(q_ip \la, q_i p \mu, q_i  q_i \gamma^i)>0$ for all $i$ and so, again by semigroup, we have
    $$g \left( (\sum_i p q_i) \la, (\sum_i q_i p) \mu, \sum_i q_i \gamma^i \right) >0,$$ 
    so $s_{\sum_i q_i \gamma^i}$ appears in $s_{pq\la} \ast s_{pq \mu}$.
    Since $qp \theta = \sum_i q_i p\al^i \prec \sum_i q_i \gamma^i$ in the dominance order, the monomial $qp\theta$ appears in that product and 
    $qp \theta \in M_k (qp \la, qp \mu)$, so $\theta \in \frac{1}{qp} M_k(qp\la,qp\mu)$, which completes the proof.  
\end{proof}

We next consider positivity criteria for Kronecker coefficients.

Suppose that $g(\la,\mu,\nu)>0$, then $s_\la$ appears in $s_\mu \ast s_\nu$, and so its leading monomial $m_\la$ also appears, so $\mathcal{P}(\mu,\nu,\la)\cap \mathbb{Z}^{r} \neq \emptyset$, where $r = \min\{ \ell(\mu),\ell(\nu) \} \ell(\la)$. Then from Section~\ref{s:LRs} we must have that $P(\mu;\la) \cap P(\nu;\la)$ has an integer point. We can then apply Corollary~\ref{cor:polytope_horn} and its inequalities to infer that the following polytope $\mathcal{P}(\mu,\nu;\la)$ has an integer point.
\begin{prop}
    Suppose that $g(\la,\mu,\nu)>0$. Then there exist nonnegative integers $\al^i_j$ satisfying
     \begin{align}\label{eq:all inequalities}
    \sum_j \al^i_j &\quad = \quad \la_i, \qquad \text{ for }i\in[k];\\
    \al^i_j &\quad \geq \quad \al^i_{j+1},  \qquad \text{ for }j\in[\ell-1], \, i\in[k]; \\
    \sum_{ (i,j) \in D(I) } \left( n(k-i) +\al^i_j \right) &\quad \leq \quad \sum_{j \in J} \mu_j + \sum_{ (d,r) \in D(K)}  n(k-d) ,  \\
    \sum_{ (i,j) \in D(I) } \left( n(k-i) +\al^i_j \right) &\quad \leq \quad \sum_{j \in J} \nu_j + \sum_{ (d,r) \in D(K)}  n(k-d)
\end{align}
where the last inequalities hold for all LR-consistent triples $I,J,K \in[\ell k]$.
\end{prop}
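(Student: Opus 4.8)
My plan is to derive this statement directly from the monomial description of the Kronecker product in Section~\ref{s:LRs} together with the Horn--inequality description of the polytopes $P(\mu;\bba)$ in Corollary~\ref{cor:polytope_horn}; no new idea is needed, only the assembly of these facts and a little bookkeeping about partition lengths. First I would use the $S_3$-symmetry of the Kronecker coefficients to rewrite the hypothesis as $g(\mu,\nu,\la)>0$, i.e. $s_\la$ appears with positive coefficient in $s_\mu\ast s_\nu=\sum_\tau g(\mu,\nu,\tau)s_\tau$. Setting $k=\ell(\la)$ and working in $k$ variables, equation~\eqref{eq:kron_mon} gives that the coefficient of $x^\la$ in $s_\mu\ast s_\nu(x_1,\dots,x_k)$ equals $\langle s_\mu\ast s_\nu,h_\la\rangle=\sum_\tau g(\mu,\nu,\tau)K_{\tau,\la}\ge g(\mu,\nu,\la)K_{\la,\la}=g(\mu,\nu,\la)>0$, since $K_{\la,\la}=1$ and all summands are nonnegative. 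Hence $\la\in M_k(s_\mu\ast s_\nu)$.

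By Proposition~\ref{prop:kron_monomials} this is equivalent to $P(\mu;\la)\cap P(\nu;\la)\neq\emptyset$, so I would fix an integer point $(\al^1,\dots,\al^k)$ lying in both; by Corollary~\ref{cor:polytope_horn} each $\al^i$ is a partition with $|\al^i|=\la_i$ and $c^{\mu}_{\al^1\cdots\al^k}>0$, $c^{\nu}_{\al^1\cdots\al^k}>0$. Positivity of these multi-LR coefficients forces $\al^i\subseteq\mu$ and $\al^i\subseteq\nu$, hence $\ell(\al^i)\le\ell:=\min\{\ell(\mu),\ell(\nu)\}$, and I record each $\al^i$ as a weakly decreasing vector $(\al^i_1,\dots,\al^i_\ell)$ with trailing zeros allowed. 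Then Lemma~\ref{lem:multiLR-ordinary} applies with this single value of $\ell$ to both $c^{\mu}_{\al^\bullet}$ and $c^{\nu}_{\al^\bullet}$ (it only needs $\ell$ to bound the lengths of the $\al^i$), so running the computation of Corollary~\ref{cor:polytope_horn} once with $\mu$ and once with $\nu$ shows that $(\al^1,\dots,\al^k)$ satisfies precisely the system in~\eqref{eq:all inequalities}: the equalities $\sum_j\al^i_j=\la_i$ and the inequalities $\al^i_j\ge\al^i_{j+1}$ are part of the definitions of $P(\mu;\la)$ and $P(\nu;\la)$, while the two Horn-type families arise from applying Theorem~\ref{thm:horn} to the triples $(\omega(\al),\mu,\delta_k(n,\ell))$ and $(\omega(\al),\nu,\delta_k(n,\ell))$ over all LR-consistent $I,J,K\subseteq[\ell k]$, using that $\omega(\al)_m=n(k-i)+\al^i_j$ and $(\delta_k(n,\ell))_m=n(k-i)$ when $m=\ell(i-1)+j$.

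There is no substantial obstacle here; the content lies entirely in the earlier sections, and the argument is a repackaging of them. The only point requiring care is the length bookkeeping: one must justify using the common value $\ell=\min\{\ell(\mu),\ell(\nu)\}$ in Corollary~\ref{cor:polytope_horn} for both $\mu$ and $\nu$, which is legitimate because both multi-LR coefficients are simultaneously positive and this bounds $\ell(\al^i)$ by $\ell$; and one should note that any index $j>\ell(\mu)$ (respectively $j>\ell(\nu)$) occurring in a sum $\sum_{j\in J}\mu_j$ (respectively $\sum_{j\in J}\nu_j$) contributes $0$, so the inequalities as stated over $[\ell k]$ are exactly those produced by the argument.
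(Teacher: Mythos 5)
Your argument is correct and is essentially the paper's own proof: the paper likewise deduces from $g(\la,\mu,\nu)>0$ that the monomial $m_\la$ appears in $s_\mu\ast s_\nu$, hence $P(\mu;\la)\cap P(\nu;\la)$ has an integer point, and then reads off the two Horn families from Corollary~\ref{cor:polytope_horn} applied with $\mu$ and with $\nu$. Your added details (no cancellation via $K_{\la,\la}=1$ in~\eqref{eq:kron_mon}, and the observation that positivity of both multi-LR coefficients bounds $\ell(\al^i)$ by $\min\{\ell(\mu),\ell(\nu)\}$ so a single $\ell$ works in Lemma~\ref{lem:multiLR-ordinary}) only make explicit steps the paper leaves implicit.
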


The above conditions can be simplified to the following. We define an mLR-consistent triple $(I,J,K)$ of subsets of $[1,\ldots,\ell k]$ as an LR-consistent triples satisfying the condition that $|I \cap [ \ell(j-1)+1,\ldots, \ell j]|=|K \cap [\ell(j-1),\ldots, \ell j]|$ for every $j=1,\ldots,k$.

\begin{thm}\label{prop:kron_pos_gen}
    Suppose that $g(\la,\mu,\nu)>0$ and let $\ell=\min\{\ell(\mu),\ell(\nu)\}$. Then there exist nonnegative integers $\{\al^i_j\}_{i\in[k],j\in[\ell]}$ satisfying
     \begin{align}
    \sum_j \al^i_j & =   \la_i, & \text{ for }i\in[k];\\
    \al^i_j & \geq   \al^i_{j+1},  & \text{ for }j\in[\ell-1], \, i\in[k]; \\
    \sum_{ (i,j) \in D(I)} \al^i_j & \leq   \min\{\sum_{j \in J} \mu_j, \sum_{j \in J} \nu_j\}, & \text{ for every mLR-consistent } (I,J,K). 
\end{align}
\end{thm}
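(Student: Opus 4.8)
My plan is to read off the required family $\{\al^i_j\}_{i\in[k],\,j\in[\ell]}$ from the Proposition immediately preceding this statement, and then simplify its Horn inequalities by keeping only the mLR-consistent triples. I would first recall why $g(\la,\mu,\nu)>0$ produces such a family: $s_\la$ occurs in $s_\mu\ast s_\nu$, hence so does its leading monomial $m_\la$ (as $K_{\la\la}=1$), so by Proposition~\ref{prop:kron_monomials} there are partitions $\al^1,\dots,\al^k$ with $|\al^i|=\la_i$ and $c^\mu_{\al^1\cdots\al^k}>0$, $c^\nu_{\al^1\cdots\al^k}>0$; positivity of these multi-LR coefficients forces $\al^i\subseteq\mu$ and $\al^i\subseteq\nu$, hence $\ell(\al^i)\le\ell$, so each $\al^i$ is a weakly decreasing vector in $\mathbb{Z}_{\geq 0}^{\ell}$ summing to $\la_i$ --- this is the first two families of constraints. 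Applying Lemma~\ref{lem:multiLR-ordinary} with this common length $\ell$ rewrites $c^\mu_{\al^1\cdots\al^k}=c^{\omega(\al)}_{\mu,\delta_k(n,\ell)}$ and $c^\nu_{\al^1\cdots\al^k}=c^{\omega(\al)}_{\nu,\delta_k(n,\ell)}$ with the \emph{same} $\omega(\al)$ and $\delta_k(n,\ell)$ in both, and Theorem~\ref{thm:horn}, used exactly as in the derivation of Corollary~\ref{cor:polytope_horn}, converts each into the system
\[
\sum_{(i,j)\in D(I)}\bigl(n(k-i)+\al^i_j\bigr)\ \le\ \sum_{j\in J}\mu_j+\!\!\sum_{(d,r)\in D(K)}\!\! n(k-d),\qquad
\sum_{(i,j)\in D(I)}\bigl(n(k-i)+\al^i_j\bigr)\ \le\ \sum_{j\in J}\nu_j+\!\!\sum_{(d,r)\in D(K)}\!\! n(k-d),
\]
ranging over all LR-consistent triples $I,J,K\subset[\ell k]$.

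The crux is then a one-line cancellation. Given an mLR-consistent triple $(I,J,K)$, which is in particular LR-consistent so the two displayed inequalities apply, set $b_i:=\#\{j\in[\ell]:(i,j)\in D(I)\}=|I\cap[\ell(i-1)+1,\ell i]|$ and $c_i:=\#\{r\in[\ell]:(i,r)\in D(K)\}=|K\cap[\ell(i-1)+1,\ell i]|$. The mLR condition says exactly $b_i=c_i$ for every $i\in[k]$, so
\[
\sum_{(i,j)\in D(I)}n(k-i)=\sum_{i=1}^{k}n(k-i)\,b_i=\sum_{i=1}^{k}n(k-i)\,c_i=\sum_{(d,r)\in D(K)}n(k-d),
\]
i.e. the staircase offsets on the two sides of each inequality agree and cancel. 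What remains is $\sum_{(i,j)\in D(I)}\al^i_j\le\sum_{j\in J}\mu_j$ together with $\sum_{(i,j)\in D(I)}\al^i_j\le\sum_{j\in J}\nu_j$, hence $\sum_{(i,j)\in D(I)}\al^i_j\le\min\{\sum_{j\in J}\mu_j,\sum_{j\in J}\nu_j\}$. Since $(I,J,K)$ was an arbitrary mLR-consistent triple, the same family $\{\al^i_j\}$ satisfies all three asserted conditions.

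I do not expect a genuine obstacle; the points requiring care are bookkeeping. One must check that $\ell=\min\{\ell(\mu),\ell(\nu)\}$ can be used \emph{uniformly} as the common partition length in Lemma~\ref{lem:multiLR-ordinary} for both $\mu$ and $\nu$ (fine, since $\ell(\al^i)\le\ell$ for every $i$), and that after padding $\mu,\nu$ to a common ambient length the mLR-consistent triples $\subset[\ell k]$ indeed occur among the LR-consistent triples fed into Theorem~\ref{thm:horn}. I would also flag that discarding the non-mLR-consistent Horn inequalities is harmless only because the statement is a \emph{necessary} condition, not an equivalence; proving the stronger equivalence would be the nontrivial extra step, since one would then need every Horn inequality coming from a non-mLR-consistent triple to be either automatically valid (when the $D(I)$-offsets exceed the $D(K)$-offsets) or implied by an mLR-consistent one, and that is not needed for the statement as written.
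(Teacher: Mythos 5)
Your argument is correct and takes essentially the same route as the paper: the paper likewise obtains the family $\{\al^i_j\}$ from the occurrence of $m_\la$ in $s_\mu\ast s_\nu$ together with Corollary~\ref{cor:polytope_horn} (packaged in the proposition immediately preceding the theorem), and then cancels the staircase offsets, which agree on the two sides precisely because $|I\cap[\ell(i-1)+1,\ldots,\ell i]|=|K\cap[\ell(i-1)+1,\ldots,\ell i]|$ for every $i$. The only (cosmetic) difference is that the paper also checks that the discarded non-mLR-consistent Horn inequalities are trivially satisfied (the offsets there differ by at least $n$), which, as you correctly note, is not needed for the necessity statement as written.
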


\begin{proof}
First note that for $I,J,K$ to be an LR-consistent triple we must have $\rho(K) \subset \rho(I)$, which implies that if $I = \{i_1 < i_2 < \cdots <i_s\}$ and $K=\{ k_1 < \cdots <k_s\}$ then $k_j \leq i_j$ for all $j$. Thus in equation~\eqref{eq:all inequalities} we would have
$$\sum_{ (d,r) \in D(K) } n(k-d) \geq \sum_{(i,j) \in D(I)} n(k-i),$$
with a difference of at least $n$ if the two sums are not actually equal. If they are not equal then the inequalities are trivially satisfied since the sums of $\al^i_j$ and $\mu_i$'s are each within $[0,n]$. We can thus assume that we have an equality. Thus $I= \cup I_p $ and $K= \cup K_p$, where $I_j, K_j \subset [\ell(j-1)+1,\ldots,\ell j]$ and $|I_j|=|K_j|$  for all $j$. For the set $J$ we must have $|J| = |I|$ and $c^{\rho(I)}_{\rho(J)\rho(K)}=1$ , which is the definition of mLR-consistent. \end{proof}

\begin{Cor}\label{cor:kron_pos_2}
    Suppose that $g(\la,\mu,\nu)>0$ and $\ell(\mu)=2$, and set $k=\ell(\la)$. Then there exist nonnegative integers $y_i \in [0, \lfloor \la_i/2 \rfloor ]$ for $i\in[k]$, such that
    \begin{align}
    \sum_{i\in A \cup C} \la_i +\sum_{i \in B }  y_i -\sum_{i\in C} y_i  \leq   \min\{\sum_{j \in J} \mu_j, \sum_{j \in J} \nu_j\}
\end{align}
for all triples of mutually disjoint sets $A \sqcup B \sqcup C \subset[k]$ and $J=\{1,\ldots,r,r+2,\ldots,r+b+1\}$ or $J= \{1,\ldots,r+b-1,r+2b\}$, where $r=2|A|+|C|$ and $b=|B|$.
\end{Cor}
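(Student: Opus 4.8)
The plan is to read Corollary~\ref{cor:kron_pos_2} off Theorem~\ref{prop:kron_pos_gen} by specializing $\ell:=\min\{\ell(\mu),\ell(\nu)\}$ to $2$ and then unwinding which mLR-consistent triples $(I,J,K)$ produce the stated inequalities. Assume $\ell(\nu)\ge 2$, so $\ell=2$; the case $\ell(\nu)=1$ forces $\nu=(n)$ and hence $\la=\mu$ (as $g(\la,\mu,(n))=\delta_{\la\mu}$), and there the inequality holds trivially with $y_i=0$. Theorem~\ref{prop:kron_pos_gen} yields nonnegative integers $\al^i_1\ge \al^i_2\ge 0$ with $\al^i_1+\al^i_2=\la_i$ for $i\in[k]$. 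Set $y_i:=\al^i_2$. Then $2y_i\le \al^i_1+\al^i_2=\la_i$ gives $y_i\in[0,\lfloor\la_i/2\rfloor]$, and $\al^i_1=\la_i-y_i$.

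Now fix mutually disjoint $A,B,C\subseteq[k]$ and build $I\subseteq[2k]$ block by block: for $i\in A$ include both $2i-1$ and $2i$; for $i\in C$ include only $2i-1$; for $i\in B$ include only $2i$; for $i\notin A\cup B\cup C$ include neither. With $D(I)$ as in Corollary~\ref{cor:polytope_horn},
\[
\sum_{(i,j)\in D(I)}\al^i_j=\sum_{i\in A}\la_i+\sum_{i\in C}(\la_i-y_i)+\sum_{i\in B}y_i=\sum_{i\in A\cup C}\la_i+\sum_{i\in B}y_i-\sum_{i\in C}y_i,
\]
which is exactly the left side of the claimed inequality, and $|I|=2|A|+|C|+|B|=r+b$. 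It remains to exhibit, for each of the two prescribed $J$, a set $K$ with $(I,J,K)$ mLR-consistent.

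Let $K$ be obtained from $I$ by leaving the $A$- and $C$-blocks unchanged but replacing $2i$ by $2i-1$ in every block $i\in B$. Then $|K\cap\{2i-1,2i\}|=|I\cap\{2i-1,2i\}|$ for all $i$, so the block condition in the definition of mLR-consistency is automatic and $|K|=r+b$. Writing $I=\{p_1<\dots<p_{r+b}\}$ and $K=\{q_1<\dots<q_{r+b}\}$, one has $q_t\le p_t$ for all $t$, so $\rho(K)\subseteq\rho(I)$, and $\rho(I)$, $\rho(K)$ differ in exactly the $b$ coordinates indexed by the $B$-blocks, with $|\rho(I)|-|\rho(K)|=\sum_t(p_t-q_t)=b$. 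The main technical step is to check that $\rho(I)/\rho(K)$ is an antichain of $b$ cells — no two in a common row or column. Distinct rows is immediate, since the affected positions lie in distinct blocks. For columns, a direct computation shows that if $i<i'$ are successive elements of $B$ and $\delta$ is the number of indices of $I$ strictly between blocks $i$ and $i'$, then the two cells sit in rows differing by $\delta+1$ and columns differing by $2(i'-i)-\delta-1\ge 1$ (using $\delta\le 2(i'-i-1)$), so the cells march strictly up and to the right. Hence $\rho(I)/\rho(K)$ is simultaneously a horizontal $b$-strip and a vertical $b$-strip.

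By the (dual) Pieri rule this gives $c^{\rho(I)}_{(b),\rho(K)}=c^{\rho(I)}_{(1^b),\rho(K)}=1$. For a fixed cardinality $s=r+b$, the map $J\mapsto\rho(J)$ is a bijection onto partitions with at most $s$ parts, each of size at most $2k-s$; the unique $s$-subset with $\rho(J)=(1^b)$ is $\{1,\dots,r,r+2,\dots,r+b+1\}$ and the one with $\rho(J)=(b)$ is $\{1,\dots,r+b-1,r+2b\}$ — precisely the two $J$'s in the statement. For either choice $(I,J,K)$ is mLR-consistent, so Theorem~\ref{prop:kron_pos_gen} gives $\sum_{(i,j)\in D(I)}\al^i_j\le\min\{\sum_{j\in J}\mu_j,\sum_{j\in J}\nu_j\}$, i.e. the asserted inequality; letting $(A,B,C)$ range over all disjoint triples finishes the proof. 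I expect the row/column spacing computation — showing $\rho(I)/\rho(K)$ is a diagonal strip — to be the only real obstacle; the rest is bookkeeping with the correspondence between subsets of $[2k]$ and partitions via $\rho$ and with the block structure of $I$ and $K$.
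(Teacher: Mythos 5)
Your proposal is correct and follows essentially the same route as the paper's proof of Corollary~\ref{cor:kron_pos_2}: specialize Theorem~\ref{prop:kron_pos_gen} to $\ell=2$, set $y_i=\al^i_2$, encode $(A,B,C)$ as the block structure of $I$ and $K$, and use that $\rho(I)/\rho(K)$ is an antichain of $b$ cells so that the LR coefficient equals $1$ exactly when $\rho(J)=(b)$ or $(1^b)$, i.e.\ for the two sets $J$ in the statement. Your explicit row/column spacing computation and the handling of the $\ell(\nu)=1$ edge case just fill in details the paper leaves implicit.
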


\begin{proof}
    To derive these conditions from Theorem~\ref{prop:kron_pos_gen}, we have $\ell=2$ and we set $x_i:= \al^i_2$, so the partition conditions for $\al^i$ become $0\leq x_i \leq \la_i/2$ as $\al^i_1=\la_i-x_i$. For the sets $I$ and $K$ we must have $I= \cup I_p $ and $K= \cup K_p$, where $I_j, K_j \subset \{2j-1,2j\}$. For each $j$, if $|I_j| = |K_j|$, then either $I_j = K_j$ or $I_j = \{2j\}, K_j =\{2j-1\}$. Then  for such $I,K$, the skew shape $\rho(I)/\rho(K)$ has no two boxes in the same column or row. Then, for any set $J$ with $|J| = |I|$ and $\rho(J)\vdash |\rho(I)/\rho(K)|$, we have $c^{\rho(I)}_{\rho(J)\rho(K)}>0$. This coefficient is equal to 1 only when $\rho(J)=(1,1,\ldots,1)$ or $\rho(J)=(|\rho(I)/\rho(K)|)$ , so $J=\{1,2\ldots,r,r+2,\cdots,r+b+1\}$ or $J = \{1,2\ldots,r+b-1,r+2b\}$ for some $r,b$, respectively.
Now set $A=\{ j: |I_j|=2\}$, set $B=\{j: I_j =\{2j\} \}$ and $C=\{j: I_j = \{2j-1\} \}$, then we have $|J|=|I|=2|A| + |B| + |C|$, and $|\rho(I)/\rho(K)|=|B|=b$, so $r=2|A|+|C|$. Finally, for the indices  in $I$ coming from $j \in A$ we have the terms $\al^j_1 + \al^j_2 = \la_j$, from $B$ we have $\al^j_1 = \la_j-y_j$, and from $B$ we have $y_j$. 
\end{proof}

\begin{rem}
    In particular, taking $A,C=\emptyset$ and $B =\{1\}$ we have $ \lfloor \la_1/2 \rfloor\leq \min(\mu_1,\nu_1)$; taking $A,B=\emptyset$ and $C =\{1\}$ we have $ \la_1- \lfloor \la_1/2 \rfloor\leq \min(\mu_2,\nu_2)$. Summing up the two inequalities recovers $\la_1 \le \min(\mu_1,\nu_1)+\min(\mu_2,\nu_2)$,  equivalent to a result from~\cite{Dvi93} in the given case $\ell(\mu)=2$. 
\end{rem}

\begin{rem}\label{rem:extremal}
    Using this approach we can also understand the maximal partitions $\la$, such that $g(\la,\mu,\nu)>0$ for a fixed pair $\mu,\nu$. Maximal here means with respect to the dominance order, i.e. such that if $\ga \succ \la$ then $g(\ga,\mu,\nu)=0$. First, we have that 
    $$P_k(\mu,\nu):= \{ (\al^1,\al^2,\cdots,\al^k, a_1,\ldots,a_k): (\al^1, \cdots,\al^k) \in P(\mu;a_1,\ldots,a_k) \cap P(\nu;a_1\ldots,a_k)\}$$
    is a convex polytope, described by the set of equalities and inequalities in Corollary~\ref{cor:polytope_horn} for $\mu$ and $\nu$. Projecting $P_k(\mu,\nu)$ to $\mathbb{R}^k$ onto its last $k$ coordinates: $(\al^1,\ldots,\al^k,a_1,\ldots,a_k) \to (a_1,\ldots,a_k)$, results in another polytope $K(\mu,\nu)$, which is exactly $N_k(s_\mu \ast s_\nu)$. Since $M_k(s_\mu \ast s_\nu) = \cup_{\la: g(\la,\mu,\nu)>0} M_k(s_\la)$, and $M_k(s_\la)$ are convex polytopes, we have that the extremal points of that union  must be among the extremal points of the individual polytopes.   So the vertices $(a_1,\cdots, a_k)$ of $K(\mu,\nu)$ for which $a_1 \geq a_2 \geq \cdots \geq a_k$ are among partitions $\la$, for which $g(\la,\mu,\nu)>0$. If $\la$ is not maximal with respect to the dominance order, i.e. there is a $\gamma \succ \la$, such that $g(\ga,\mu,\nu)>0$. Then $N_k(s_\la) \subset N_k(s_\ga)$ and $\la$ is inside, but not an extremal point, of $N_k(s_\ga) \subset K(\mu,\nu)$, and is thus not a vertex of the latter polytope. 
\end{rem}

\section{Additional remarks}\label{s:final}
\subsection{}\label{ss:kron_sat}
The set of partitions $\al$ for which $g(\la,\mu,\al)>0$ is not convex and so is not saturated by our definitions. For example, when $\lambda = (8,8)$ and $\mu = (5,3,1,1,1,1,1,1,1,1)$ we can take $\alpha = (7,3,2,2,2)$, $\beta = (5,5,2,2,2)$ and $\frac{\al+\be}{2}=(6, 4, 2, 2, 2).$ We have that $g(\lambda,\mu, \alpha) =g(\lambda,\mu, \beta)=1$, but $g(\lambda,\mu, \frac{\alpha+\beta}{2}) = 0$. At the same time, $s_\la\ast s_\mu$ does not have a unique maximal element with respect to the dominance order, see Appendix~\ref{appendix:kron_prod}.

This is not surprising, given that the Kronecker coefficients, unlike the Littlewood-Richardson coefficients, fail to satisfy the saturation property. Knutson-Tao's saturation theorem~\cite{KT} gives that $c^\la_{\mu\nu} >0 $ if and only if $c^{N\la}_{N\mu, N\nu}>0$ for every $N\geq 1$. Their proof shows that $c^{\la}_{\mu\nu}>0$ if and only if the associated LR polytope is nonempty because that polytope always has an integer vertex. For the Kronecker coefficients there are no such criteria, and in fact saturation fails already with $g( (1,1),(1,1),(1,1))=0$ but $g( (2,2), (2,2), (2,2))=1$, see further~\cite{BOR09}. Saturation does not hold for the reduced Kronecker coefficients either, see~\cite{PPr} and \cite{IP23}.

\subsection{}\label{ss:complexity}
As shown in~\cite{IMW} the following decision problem \textsf{KronPos} is $\NP$-hard: given input $(\la,\mu,\nu)$ (in unary) determine whether $g(\la,\mu,\nu)>0$. This, in particular, implies that the partition triples with positive Kronecker coefficients cannot be characterized in a computationally efficient manner.  At the same time, the rescaled version is slightly easier: deciding if $g(N\la,N\mu,N\nu)>0$ for some $N$ is in $\NP$ and $\coNP$, as shown in~\cite{burgisser2017membership}. This follows from~\cite{vergne2017inequalities} as such triples can now be characterized by a set of linear inequalities. See also Remark~\ref{rem:extremal} for another possible way of describing extremal points. 

Characterization by determining if a polytope has an integer point does not necessarily make the problem easy (as in computable in polynomial time, i.e. in $\P$). While it is easy to decide if a system of linear inequalities has a real solution, deciding if there is an integral solution, i.e. solving an Integer Linear Program (ILP), in general is an $\NP$-complete problem.

The presence of a monomial $x^\bba$ in $s_\la \ast s_\mu$ appears to be an intermediate problem defined as:

\smallskip
\textsf{KronMonomial:}

Input: $\la,\mu,\bba \in \mathbb{N}_0^k$.

Output: Yes, if and only if $x^\bba$ appears in $s_\la \ast s_\mu (x_1,\ldots,x_k)$. 

\smallskip

{\bf Open Problem.} Is  \textsf{KronMonomial} $\NP$-complete?

\smallskip

Unlike~\textsf{KronPos}, we have that \textsf{KronMonomial} is in $\NP$ and the question is to determine whether it is $\NP$-hard. A witness would be any tuple $(\al^1,\ldots\al^k)$, such that $\al^i \vdash a_i$, together with two multi-LR tableaux of shapes $\la$ and $\mu$ which give $c^\la_{\al^1,\al^2,\ldots}>0$, $c^\mu_{\al^1,\ldots}>0$. Checking if the SSYTs satisfy the ballot conditions can be done in $O(n^2)$ time. 

Answering such computational complexity questions helps us understand the possibilities and limitations for combinatorial interpretations or ``nice'' formulas. For more such problems and formal framework see~\cite{Pak22,Pan23}.

\subsection{} \label{ss:doubts}
We doubt that $s_\la \ast s_\mu$ always has a saturated Newton polytope, but finding counterexamples computationally  becomes quickly infeasible. Studying the polytope $\mathcal{P}(\la,\mu;\bba)=P(\la;\bba)\cap P(\mu;\bba)$ in depth and employing geometric packages like LaTTe would be the next natural step.

\subsection{} \label{ss:plethysm}

In the context of open problems in algebraic combinatorics and representation theory, see, e.g., \cite{plethysm}, the Kronecker and plethysm coefficients often appear in analogous and somewhat unexpected relations. These relations are also evident within Geometric Complexity Theory, see, e.g., \cite{FI, IP17}. It is thus natural to ask whether the plethysm of two Schur functions $s_\la[s_\mu]$ has SNP. Recall that the \defn{plethysm} of two symmetric functions is defined as follows. Suppose that $g=\sum_{\al} x^\al$, where monomials with coefficients larger than 1 just appear multiple times. Then $f[g]:=f(x^{\al^1},x^{\al^2},\cdots)$, substituting all monomials as variables in $f$. In particular
$$s_\la[s_\mu]=\sum_\nu a_\nu(\la[\mu]) s_\nu$$
gives the plethysm coefficients $a_\nu(\la[\mu])$, which are the multiplicites of irreducible $GL$-module $V_\nu$ in the composition 
of the two irreducible representations corresponding to $V_\mu$ and $V_\nu$. Very few cases have been determined, with one of the more general ones being
$$s_{(m)}[s_{2}] = \sum_{\mu \vdash m} s_{2\mu},$$
which clearly has SNP as it contains $s_{(2m)}$. Similarly, $s_{(m)}[s_{(n)}]$ contains the leading term $s_{(mn)}$ and has SNP. It appears that $s_\la[s_\mu]$ always has a unique maximal $s_\nu$ in the expansion, obtained as follows. Let $\mu^1=\mu,\mu^2,\ldots,\mu^\ell$ be the first $\ell=\ell(\la)$ compositions appearing in $N(s_\mu)$ arranged in lexicographic order starting with the maximal. Then there is a monomial of degree  \begin{align}\label{eqn:maximal_nu}
    \nu=\la_1\mu^1 + \la_2 \mu^2+\cdots+\la_\ell \mu^\ell
\end{align} appearing in $s_\la[s_\mu]$. In an earlier version of this paper we conjectured that this is the maximal monomial with respect to the reverse lexicographic order. This was immediately confirmed by Orellana, Saliola, Schilling and Zabrocki in the subsequently released \cite{orellana2024quasi}, see also \cite{iijima2011first,paget2019generalized}. 

In \cite{paget2019generalized}, Paget and Wildon demonstrated that there is a unique maximal term in dominance order in the Schur expansion of $s_{\la}[s_\mu]$ if and only if either $\ell(\la) = 1$ or $\ell(\la) = 2$ and $\mu$ is rectangular. When such a unique maximal term $s_\nu$ with respect to the dominance order is present in the plethysm, it follows that $M(s_\la[s_\mu]) = N(s_\nu)$ and so $s_\la[s_\mu]$ would have a saturated Newton polytope.

The sufficient and necessary condition given by Paget and Wildon also implies that plethysms cannot be expected to have unique maximal terms in general. For instance, $s_{(1,1,1)}[s_{(2,1)}]= s_{(2, 2, 2, 1, 1, 1)} + s_{(3, 2, 2, 1, 1)} + s_{(3, 2, 2, 2)} + s_{(3, 3, 1, 1, 1)} + s_{(3, 3, 2, 1)} + s_{(3, 3, 3)} + s_{(4, 2, 1, 1, 1)} + s_{(4, 2, 2, 1)} + 2*s_{(4, 3, 1, 1)} + s_{(4, 3, 2)} + s_{(4, 4, 1)} + s_{(5, 2, 1, 1)} + s_{(5, 2, 2)} + s_{(5, 3, 1)} + s_{(6, 1, 1, 1)}$. In this plethysm, $(6,1,1,1)$ and $(5,3,1)$ are incomparable maximal elements with respect to the dominance order. Determining whether or not a plethysm with more than one maximal term would have a saturated Newton polytope is far from straightforward. 

{\bf Open problem.} Determine when $s_\la[s_\mu]$ has a saturated Newton polytope.

\subsection*{Acknowledgements}
We are grateful to Christian Ikenmeyer, Allen Knutson, Igor Pak, Anne Schilling and Alexander Yong for helpful discussions and for providing references on the topics. G.P. was partially supported by an NSF CCF:AF grant.

\bibliographystyle{alpha}
\bibliography{bibliography}
\newpage
\appendix

\section{Computations of explicit Kronecker products}\label{appendix:kron_prod}

Let $\lambda = (4,4)$ and $\mu = (2^4)$.
Then $g(2\lambda,2\mu, 2\nu) > 0$ while $g(\lambda,\mu, \nu) = 0$ for \[\nu \in \{(4, 3, 1),  ( 3, 2, 2, 1)\}.\]

Let $\lambda = (6,6)$ and $\mu = (2^6)$.
Then $g(2\lambda,2\mu, 2\nu) > 0$ while $g(\lambda,\mu, \nu) = 0$ for \[\nu \in \{(4, 4, 3, 1),  (3, 3, 3, 1, 1, 1), (4, 3, 2, 2, 1), (3, 2, 2, 2, 2, 1)\}.\]

Let $\lambda = (8,8)$ and $\mu = (2^8)$.
Then $g(2\lambda,2\mu, 2\nu) > 0$ while $g(\lambda,\mu, \nu) = 0$ for \[\nu \in \{(4, 4, 4, 3, 1),  (4, 3, 3, 3, 1, 1, 1), (4, 4, 3, 2, 2, 1), (3, 3, 3, 2, 2, 1, 1, 1), (4, 3, 2, 2, 2, 2, 1), (3, 2, 2, 2, 2, 2, 2, 1)\}.\]

All of the above Kronecker products have SNP by maximal term argument though, as $s_{(4^{k})}$ is the maximal term in dominance order in $s_{(2k,2k)}\ast s_{2^{2k}}$.

Let $\lambda = (8,8)$ and $\mu = (5,3,1,1,1,1,1,1,1,1)$. Let $\alpha = (7,3,2,2,2)$, $\beta = (5,5,2,2,2)$ and $\nu=(6, 4, 2, 2, 2).$ We have that $g(\lambda,\mu, \alpha) =g(\lambda,\mu, \beta)=1$, but $g(\lambda,\mu, \frac{\alpha+\beta}{2}) = 0$.
We have
$s_\lambda \ast s_\mu =  s_{(3, 2, 2, 2, 2, 2, 1, 1, 1)} + s_{(3, 2, 2, 2, 2, 2, 2, 1)} + 2*s_{(3, 3, 2, 2, 2, 1, 1, 1, 1)} + 2*s_{(3, 3, 2, 2, 2, 2, 1, 1)} + 2*s_{(3, 3, 2, 2, 2, 2, 2)} + s_{(3, 3, 3, 2, 1, 1, 1, 1, 1)} + 2*s_{(3, 3, 3, 2, 2, 1, 1, 1)} + 2*s_{(3, 3, 3, 2, 2, 2, 1)} + s_{(3, 3, 3, 3, 2, 1, 1)} + s_{(4, 2, 2, 2, 1, 1, 1, 1, 1, 1)} + 2*s_{(4, 2, 2, 2, 2, 1, 1, 1, 1)} + 4*s_{(4, 2, 2, 2, 2, 2, 1, 1)} + s_{(4, 2, 2, 2, 2, 2, 2)} + s_{(4, 3, 2, 1, 1, 1, 1, 1, 1, 1)} + 4*s_{(4, 3, 2, 2, 1, 1, 1, 1, 1)} + 7*s_{(4, 3, 2, 2, 2, 1, 1, 1)} + 6*s_{(4, 3, 2, 2, 2, 2, 1)} + s_{(4, 3, 3, 1, 1, 1, 1, 1, 1)} + 4*s_{(4, 3, 3, 2, 1, 1, 1, 1)} + 5*s_{(4, 3, 3, 2, 2, 1, 1)} + 3*s_{(4, 3, 3, 2, 2, 2)} + s_{(4, 3, 3, 3, 1, 1, 1)} + s_{(4, 3, 3, 3, 2, 1)} + 2*s_{(4, 4, 2, 1, 1, 1, 1, 1, 1)} + 3*s_{(4, 4, 2, 2, 1, 1, 1, 1)} + 6*s_{(4, 4, 2, 2, 2, 1, 1)} + s_{(4, 4, 2, 2, 2, 2)} + s_{(4, 4, 3, 1, 1, 1, 1, 1)} + 3*s_{(4, 4, 3, 2, 1, 1, 1)} + 3*s_{(4, 4, 3, 2, 2, 1)} + s_{(4, 4, 3, 3, 2)} + s_{(4, 4, 4, 2, 1, 1)} + s_{(5, 2, 2, 1, 1, 1, 1, 1, 1, 1)} + 3*s_{(5, 2, 2, 2, 1, 1, 1, 1, 1)} + 5*s_{(5, 2, 2, 2, 2, 1, 1, 1)} + 4*s_{(5, 2, 2, 2, 2, 2, 1)} + s_{(5, 3, 1, 1, 1, 1, 1, 1, 1, 1)} + 3*s_{(5, 3, 2, 1, 1, 1, 1, 1, 1)} + 8*s_{(5, 3, 2, 2, 1, 1, 1, 1)} + 8*s_{(5, 3, 2, 2, 2, 1, 1)} + 5*s_{(5, 3, 2, 2, 2, 2)} + 2*s_{(5, 3, 3, 1, 1, 1, 1, 1)} + 4*s_{(5, 3, 3, 2, 1, 1, 1)} + 4*s_{(5, 3, 3, 2, 2, 1)} + s_{(5, 3, 3, 3, 1, 1)} + s_{(5, 4, 1, 1, 1, 1, 1, 1, 1)} + 3*s_{(5, 4, 2, 1, 1, 1, 1, 1)} + 5*s_{(5, 4, 2, 2, 1, 1, 1)} + 4*s_{(5, 4, 2, 2, 2, 1)} + s_{(5, 4, 3, 1, 1, 1, 1)} + 2*s_{(5, 4, 3, 2, 1, 1)} + s_{(5, 4, 3, 2, 2)} + s_{(5, 5, 2, 1, 1, 1, 1)} + s_{(5, 5, 2, 2, 2)} + 2*s_{(6, 2, 2, 1, 1, 1, 1, 1, 1)} + 3*s_{(6, 2, 2, 2, 1, 1, 1, 1)} + 5*s_{(6, 2, 2, 2, 2, 1, 1)} + s_{(6, 2, 2, 2, 2, 2)} + s_{(6, 3, 1, 1, 1, 1, 1, 1, 1)} + 3*s_{(6, 3, 2, 1, 1, 1, 1, 1)} + 5*s_{(6, 3, 2, 2, 1, 1, 1)} + 4*s_{(6, 3, 2, 2, 2, 1)} + s_{(6, 3, 3, 1, 1, 1, 1)} + s_{(6, 3, 3, 2, 1, 1)} + s_{(6, 3, 3, 2, 2)} + s_{(6, 4, 1, 1, 1, 1, 1, 1)} + s_{(6, 4, 2, 1, 1, 1, 1)} + 2*s_{(6, 4, 2, 2, 1, 1)} + s_{(7, 2, 2, 1, 1, 1, 1, 1)} + 2*s_{(7, 2, 2, 2, 1, 1, 1)} + 2*s_{(7, 2, 2, 2, 2, 1)} + s_{(7, 3, 2, 1, 1, 1, 1)} + s_{(7, 3, 2, 2, 1, 1)} + s_{(7, 3, 2, 2, 2)} + s_{(8, 2, 2, 2, 1, 1)}.$

\section{The full list of Horn inequalities}\label{appendix_horn}

Here we list the full set of Horn inequalities for $c^\la_{\mu\nu}>0$ when $\ell(\la) =r =6$.

We list the LR-consistent triples with $I, J, K \subset [1,2,3,4,5,6]$ and $c^{\rho(I)}_{\rho(J)\rho(K)}=1$.
\tiny

\begin{tabular}{p{1.8in}p{2in}p{2.5in}}

$I =  [1] , J =  [1] ,K =  [1]  $ 

$I =  [2] , J =  [1] ,K =  [2] $ 

$I =  [3] , J =  [1] ,K =  [3]  $ 

$I =  [4] , J =  [1] ,K =  [4]  $ 

$I =  [5] , J =  [1] ,K =  [5]  $ 

$I =  [6] , J =  [1] ,K =  [6]  $ 

$I =  [3] , J =  [2] ,K =  [2]  $ 

$I =  [4] , J =  [2] ,K =  [3]  $ 

$I =  [5] , J =  [2] ,K =  [4]  $ 

$I =  [6] , J =  [2] ,K =  [5]  $ 

$I =  [5] , J =  [3] ,K =  [3]  $ 

$I =  [6] , J =  [3] ,K =  [4]  $ 

$I =  [2, 1] , J =  [2, 1] ,K =  [2, 1]  $ 

$I =  [3, 1] , J =  [2, 1] ,K =  [3, 1]  $ 

$I =  [4, 1] , J =  [2, 1] ,K =  [4, 1]  $ 

$I =  [5, 1] , J =  [2, 1] ,K =  [5, 1]  $ 

$I =  [6, 1] , J =  [2, 1] ,K =  [6, 1]  $ 

$I =  [3, 2] , J =  [2, 1] ,K =  [3, 2] $ 

$I =  [4, 2] , J =  [2, 1] ,K =  [4, 2]  $ 

$I =  [5, 2] , J =  [2, 1] ,K =  [5, 2]  $ 
&
$I =  [6, 2] , J =  [2, 1] ,K =  [6, 2] $ 

$I =  [4, 3] , J =  [2, 1] ,K =  [4, 3]  $ 

$I =  [5, 3] , J =  [2, 1] ,K =  [5, 3] $ 

$I =  [6, 3] , J =  [2, 1] ,K =  [6, 3]  $ 

$I =  [5, 4] , J =  [2, 1] ,K =  [5, 4]  $ 

$I =  [6, 4] , J =  [2, 1] ,K =  [6, 4]  $ 

$I =  [6, 5] , J =  [2, 1] ,K =  [6, 5]  $ 

$I =  [4, 1] , J =  [3, 1] ,K =  [3, 1]  $ 

$I =  [3, 2] , J =  [3, 1] ,K =  [3, 1]  $ 

$I =  [5, 1] , J =  [3, 1] ,K =  [4, 1]  $ 

$I =  [4, 2] , J =  [3, 1] ,K =  [4, 1]  $ 

$I =  [6, 1] , J =  [3, 1] ,K =  [5, 1]  $ 

$I =  [5, 2] , J =  [3, 1] ,K =  [5, 1]  $ 

$I =  [6, 2] , J =  [3, 1] ,K =  [6, 1]  $ 

$I =  [4, 2] , J =  [3, 1] ,K =  [3, 2]  $ 

$I =  [5, 2] , J =  [3, 1] ,K =  [4, 2]  $ 

$I =  [4, 3] , J =  [3, 1] ,K =  [4, 2]  $ 

$I =  [6, 2] , J =  [3, 1] ,K =  [5, 2]  $ 

$I =  [5, 3] , J =  [3, 1] ,K =  [5, 2]  $ 

$I =  [6, 3] , J =  [3, 1] ,K =  [6, 2]  $ 
&
$I =  [5, 3] , J =  [3, 1] ,K =  [4, 3]  $ 

$I =  [6, 3] , J =  [3, 1] ,K =  [5, 3]  $ 

$I =  [5, 4] , J =  [3, 1] ,K =  [5, 3]  $ 

$I =  [6, 4] , J =  [3, 1] ,K =  [6, 3]  $ 

$I =  [6, 4] , J =  [3, 1] ,K =  [5, 4] $ 

$I =  [6, 5] , J =  [3, 1] ,K =  [6, 4] $ 

$I =  [6, 1] , J =  [4, 1] ,K =  [4, 1]  $ 

$I =  [5, 2] , J =  [4, 1] ,K =  [4, 1]  $ 

$I =  [4, 3] , J =  [4, 1] ,K =  [4, 1]  $ 

$I =  [6, 2] , J =  [4, 1] ,K =  [5, 1]  $ 

$I =  [5, 3] , J =  [4, 1] ,K =  [5, 1] $ 

$I =  [6, 3] , J =  [4, 1] ,K =  [6, 1]  $ 

$I =  [5, 2] , J =  [4, 1] ,K =  [3, 2]  $ 

$I =  [6, 2] , J =  [4, 1] ,K =  [4, 2]  $ 

$I =  [5, 3] , J =  [4, 1] ,K =  [4, 2]  $ 

$I =  [6, 3] , J =  [4, 1] ,K =  [5, 2]  $ 

$I =  [5, 4] , J =  [4, 1] ,K =  [5, 2]  $ 

$I =  [6, 4] , J =  [4, 1] ,K =  [6, 2]  $ 

$I =  [6, 3] , J =  [4, 1] ,K =  [4, 3]  $ 

$I =  [6, 4] , J =  [4, 1] ,K =  [5, 3]  $ 
\end{tabular}

\begin{tabular}{p{1.8in}p{2.1in}p{2.5in}}
$I =  [6, 5] , J =  [4, 1] ,K =  [6, 3]  $ 

$I =  [6, 3] , J =  [5, 1] ,K =  [5, 1]  $ 

$I =  [5, 4] , J =  [5, 1] ,K =  [5, 1]  $ 

$I =  [6, 4] , J =  [5, 1] ,K =  [6, 1]  $ 

$I =  [6, 2] , J =  [5, 1] ,K =  [3, 2]  $ 

$I =  [6, 3] , J =  [5, 1] ,K =  [4, 2]  $ 

$I =  [6, 4] , J =  [5, 1] ,K =  [5, 2]  $ 

$I =  [6, 5] , J =  [5, 1] ,K =  [6, 2]  $ 

$I =  [6, 5] , J =  [6, 1] ,K =  [6, 1]  $ 

$I =  [4, 3] , J =  [3, 2] ,K =  [3, 2]  $ 

$I =  [5, 3] , J =  [3, 2] ,K =  [4, 2]  $ 

$I =  [6, 3] , J =  [3, 2] ,K =  [5, 2]  $ 

$I =  [5, 4] , J =  [3, 2] ,K =  [4, 3]  $ 

$I =  [6, 4] , J =  [3, 2] ,K =  [5, 3]  $ 

$I =  [6, 5] , J =  [3, 2] ,K =  [5, 4]  $ 

$I =  [6, 3] , J =  [4, 2] ,K =  [4, 2]  $ 

$I =  [5, 4] , J =  [4, 2] ,K =  [4, 2] $ 

$I =  [6, 4] , J =  [4, 2] ,K =  [5, 2]  $ 

$I =  [6, 4] , J =  [4, 2] ,K =  [4, 3]  $ 

$I =  [6, 5] , J =  [4, 2] ,K =  [5, 3]  $ 

$I =  [6, 5] , J =  [5, 2] ,K =  [5, 2]  $ 

$I =  [6, 5] , J =  [4, 3] ,K =  [4, 3]  $ 

$I =  [3, 2, 1] , J =  [3, 2, 1] ,K =  [3, 2, 1]  $ 

$I =  [4, 2, 1] , J =  [3, 2, 1] ,K =  [4, 2, 1]  $ 

$I =  [5, 2, 1] , J =  [3, 2, 1] ,K =  [5, 2, 1]  $ 

$I =  [6, 2, 1] , J =  [3, 2, 1] ,K =  [6, 2, 1]  $ 

$I =  [4, 3, 1] , J =  [3, 2, 1] ,K =  [4, 3, 1]  $ 

$I =  [5, 3, 1] , J =  [3, 2, 1] ,K =  [5, 3, 1]  $ 

$I =  [6, 3, 1] , J =  [3, 2, 1] ,K =  [6, 3, 1]  $ 

$I =  [5, 4, 1] , J =  [3, 2, 1] ,K =  [5, 4, 1]  $ 

$I =  [6, 4, 1] , J =  [3, 2, 1] ,K =  [6, 4, 1]  $ 

$I =  [6, 5, 1] , J =  [3, 2, 1] ,K =  [6, 5, 1]  $ 

$I =  [4, 3, 2] , J =  [3, 2, 1] ,K =  [4, 3, 2]  $ 

$I =  [5, 3, 2] , J =  [3, 2, 1] ,K =  [5, 3, 2] $ 

$I =  [6, 3, 2] , J =  [3, 2, 1] ,K =  [6, 3, 2]  $ 

$I =  [5, 4, 2] , J =  [3, 2, 1] ,K =  [5, 4, 2]  $ 

$I =  [6, 4, 2] , J =  [3, 2, 1] ,K =  [6, 4, 2]  $ 

$I =  [6, 5, 2] , J =  [3, 2, 1] ,K =  [6, 5, 2]  $ 

$I =  [5, 4, 3] , J =  [3, 2, 1] ,K =  [5, 4, 3]  $ 

$I =  [6, 4, 3] , J =  [3, 2, 1] ,K =  [6, 4, 3]  $ 

$I =  [6, 5, 3] , J =  [3, 2, 1] ,K =  [6, 5, 3]  $ 

$I =  [6, 5, 4] , J =  [3, 2, 1] ,K =  [6, 5, 4]  $ 

$I =  [5, 2, 1] , J =  [4, 2, 1] ,K =  [4, 2, 1]  $ 

$I =  [4, 3, 1] , J =  [4, 2, 1] ,K =  [4, 2, 1]  $ 

$I =  [6, 2, 1] , J =  [4, 2, 1] ,K =  [5, 2, 1]  $ 

$I =  [5, 3, 1] , J =  [4, 2, 1] ,K =  [5, 2, 1]  $ 

$I =  [6, 3, 1] , J =  [4, 2, 1] ,K =  [6, 2, 1]  $ 

$I =  [5, 3, 1] , J =  [4, 2, 1] ,K =  [4, 3, 1]  $ 

$I =  [4, 3, 2] , J =  [4, 2, 1] ,K =  [4, 3, 1]  $ 

$I =  [6, 3, 1] , J =  [4, 2, 1] ,K =  [5, 3, 1]  $ 

$I =  [5, 4, 1] , J =  [4, 2, 1] ,K =  [5, 3, 1]  $ 

$I =  [5, 3, 2] , J =  [4, 2, 1] ,K =  [5, 3, 1]  $ 

$I =  [6, 4, 1] , J =  [4, 2, 1] ,K =  [6, 3, 1]  $ 

$I =  [6, 3, 2] , J =  [4, 2, 1] ,K =  [6, 3, 1]  $ 

$I =  [6, 4, 1] , J =  [4, 2, 1] ,K =  [5, 4, 1] $ 

$I =  [5, 4, 2] , J =  [4, 2, 1] ,K =  [5, 4, 1]  $ 

$I =  [6, 5, 1] , J =  [4, 2, 1] ,K =  [6, 4, 1]  $ 

$I =  [6, 4, 2] , J =  [4, 2, 1] ,K =  [6, 4, 1]  $ 

$I =  [6, 5, 2] , J =  [4, 2, 1] ,K =  [6, 5, 1]  $ 

$I =  [5, 3, 2] , J =  [4, 2, 1] ,K =  [4, 3, 2]  $ 

$I =  [6, 3, 2] , J =  [4, 2, 1] ,K =  [5, 3, 2]  $ 

$I =  [5, 4, 2] , J =  [4, 2, 1] ,K =  [5, 3, 2]  $ 

$I =  [6, 4, 2] , J =  [4, 2, 1] ,K =  [6, 3, 2]  $ 

$I =  [6, 4, 2] , J =  [4, 2, 1] ,K =  [5, 4, 2]  $ 

$I =  [5, 4, 3] , J =  [4, 2, 1] ,K =  [5, 4, 2]  $ 

$I =  [6, 5, 2] , J =  [4, 2, 1] ,K =  [6, 4, 2] $ 

$I =  [6, 4, 3] , J =  [4, 2, 1] ,K =  [6, 4, 2]  $ 

$I =  [6, 5, 3] , J =  [4, 2, 1] ,K =  [6, 5, 2]  $ 

$I =  [6, 4, 3] , J =  [4, 2, 1] ,K =  [5, 4, 3]  $ 

$I =  [6, 5, 3] , J =  [4, 2, 1] ,K =  [6, 4, 3]  $ 

$I =  [6, 5, 4] , J =  [4, 2, 1] ,K =  [6, 5, 3]  $ 

$I =  [6, 3, 1] , J =  [5, 2, 1] ,K =  [5, 2, 1]  $ 

$I =  [5, 4, 1] , J =  [5, 2, 1] ,K =  [5, 2, 1] $ 

$I =  [6, 4, 1] , J =  [5, 2, 1] ,K =  [6, 2, 1]  $ 

$I =  [6, 3, 1] , J =  [5, 2, 1] ,K =  [4, 3, 1]  $ 
&
$I =  [5, 3, 2] , J =  [5, 2, 1] ,K =  [4, 3, 1]  $ 

$I =  [6, 4, 1] , J =  [5, 2, 1] ,K =  [5, 3, 1]  $ 

$I =  [6, 3, 2] , J =  [5, 2, 1] ,K =  [5, 3, 1]  $ 

$I =  [5, 4, 2] , J =  [5, 2, 1] ,K =  [5, 3, 1]  $ 

$I =  [6, 5, 1] , J =  [5, 2, 1] ,K =  [6, 3, 1]  $ 

$I =  [6, 4, 2] , J =  [5, 2, 1] ,K =  [6, 3, 1]  $ 

$I =  [6, 4, 2] , J =  [5, 2, 1] ,K =  [5, 4, 1]  $ 

$I =  [5, 4, 3] , J =  [5, 2, 1] ,K =  [5, 4, 1] $ 

$I =  [6, 5, 2] , J =  [5, 2, 1] ,K =  [6, 4, 1]  $ 

$I =  [6, 4, 3] , J =  [5, 2, 1] ,K =  [6, 4, 1]  $ 

$I =  [6, 5, 3] , J =  [5, 2, 1] ,K =  [6, 5, 1]  $ 

$I =  [6, 3, 2] , J =  [5, 2, 1] ,K =  [4, 3, 2]  $ 

$I =  [6, 4, 2] , J =  [5, 2, 1] ,K =  [5, 3, 2]  $ 

$I =  [6, 5, 2] , J =  [5, 2, 1] ,K =  [6, 3, 2]  $ 

$I =  [6, 4, 3] , J =  [5, 2, 1] ,K =  [5, 4, 2]  $ 

$I =  [6, 5, 3] , J =  [5, 2, 1] ,K =  [6, 4, 2] $ 

$I =  [6, 5, 4] , J =  [5, 2, 1] ,K =  [6, 5, 2]  $ 

$I =  [6, 5, 1] , J =  [6, 2, 1] ,K =  [6, 2, 1]  $ 

$I =  [6, 3, 2] , J =  [6, 2, 1] ,K =  [4, 3, 1] $ 

$I =  [6, 4, 2] , J =  [6, 2, 1] ,K =  [5, 3, 1]  $ 

$I =  [6, 5, 2] , J =  [6, 2, 1] ,K =  [6, 3, 1]  $ 

$I =  [6, 4, 3] , J =  [6, 2, 1] ,K =  [5, 4, 1]  $ 

$I =  [6, 5, 3] , J =  [6, 2, 1] ,K =  [6, 4, 1] $ 

$I =  [6, 5, 4] , J =  [6, 2, 1] ,K =  [6, 5, 1]  $ 

$I =  [5, 4, 1] , J =  [4, 3, 1] ,K =  [4, 3, 1]  $ 

$I =  [5, 3, 2] , J =  [4, 3, 1] ,K =  [4, 3, 1]  $ 

$I =  [6, 4, 1] , J =  [4, 3, 1] ,K =  [5, 3, 1]  $ 

$I =  [6, 3, 2] , J =  [4, 3, 1] ,K =  [5, 3, 1]  $ 

$I =  [5, 4, 2] , J =  [4, 3, 1] ,K =  [5, 3, 1]  $ 

$I =  [6, 4, 2] , J =  [4, 3, 1] ,K =  [6, 3, 1]  $ 

$I =  [6, 5, 1] , J =  [4, 3, 1] ,K =  [5, 4, 1]  $ 

$I =  [6, 4, 2] , J =  [4, 3, 1] ,K =  [5, 4, 1]  $ 

$I =  [6, 5, 2] , J =  [4, 3, 1] ,K =  [6, 4, 1]  $ 

$I =  [5, 4, 2] , J =  [4, 3, 1] ,K =  [4, 3, 2] , $ 

$I =  [6, 4, 2] , J =  [4, 3, 1] ,K =  [5, 3, 2] , $ 

$I =  [5, 4, 3] , J =  [4, 3, 1] ,K =  [5, 3, 2] , $ 

$I =  [6, 4, 3] , J =  [4, 3, 1] ,K =  [6, 3, 2] ,  $ 

$I =  [6, 5, 2] , J =  [4, 3, 1] ,K =  [5, 4, 2] ,$ 

$I =  [6, 4, 3] , J =  [4, 3, 1] ,K =  [5, 4, 2] , $ 

$I =  [6, 5, 3] , J =  [4, 3, 1] ,K =  [6, 4, 2] ,  $ 

$I =  [6, 5, 3] , J =  [4, 3, 1] ,K =  [5, 4, 3] , $ 

$I =  [6, 5, 4] , J =  [4, 3, 1] ,K =  [6, 4, 3]  $ 

$I =  [6, 5, 1] , J =  [5, 3, 1] ,K =  [5, 3, 1]  $ 

$I =  [5, 4, 3] , J =  [5, 3, 1] ,K =  [5, 3, 1] , $ 

$I =  [6, 5, 2] , J =  [5, 3, 1] ,K =  [6, 3, 1] ,  $ 

$I =  [6, 4, 3] , J =  [5, 3, 1] ,K =  [6, 3, 1] , $ 

$I =  [6, 5, 2] , J =  [5, 3, 1] ,K =  [5, 4, 1] , $ 

$I =  [6, 4, 3] , J =  [5, 3, 1] ,K =  [5, 4, 1] ,  $ 

$I =  [6, 5, 3] , J =  [5, 3, 1] ,K =  [6, 4, 1] ,  $ 

$I =  [6, 4, 2] , J =  [5, 3, 1] ,K =  [4, 3, 2] ,  $ 

$I =  [6, 5, 2] , J =  [5, 3, 1] ,K =  [5, 3, 2] ,  $ 

$I =  [6, 4, 3] , J =  [5, 3, 1] ,K =  [5, 3, 2] , $ 

$I =  [6, 5, 3] , J =  [5, 3, 1] ,K =  [6, 3, 2] ,  $ 

$I =  [6, 5, 3] , J =  [5, 3, 1] ,K =  [5, 4, 2] ,  $ 

$I =  [6, 5, 4] , J =  [5, 3, 1] ,K =  [6, 4, 2] ,  $ 

$I =  [6, 5, 3] , J =  [6, 3, 1] ,K =  [6, 3, 1] , $ 

$I =  [6, 5, 3] , J =  [6, 3, 1] ,K =  [5, 4, 1] , $ 

$I =  [6, 5, 4] , J =  [6, 3, 1] ,K =  [6, 4, 1] ,  $ 

$I =  [6, 5, 2] , J =  [5, 4, 1] ,K =  [4, 3, 2] ,  $ 

$I =  [6, 5, 3] , J =  [5, 4, 1] ,K =  [5, 3, 2] ,  $ 

$I =  [6, 5, 4] , J =  [5, 4, 1] ,K =  [6, 3, 2] ,$ 

$I =  [5, 4, 3] , J =  [4, 3, 2] ,K =  [4, 3, 2] , $ 

$I =  [6, 4, 3] , J =  [4, 3, 2] ,K =  [5, 3, 2] , $ 

$I =  [6, 5, 3] , J =  [4, 3, 2] ,K =  [5, 4, 2] ,  $ 

$I =  [6, 5, 4] , J =  [4, 3, 2] ,K =  [5, 4, 3] , $ 

$I =  [6, 5, 3] , J =  [5, 3, 2] ,K =  [5, 3, 2] ,  $ 

$I =  [6, 5, 4] , J =  [5, 3, 2] ,K =  [5, 4, 2] ,  $ 

$I =  [4, 3, 2, 1] , J =  [4, 3, 2, 1] ,K =  [4, 3, 2, 1]  $ 

$I =  [5, 3, 2, 1] , J =  [4, 3, 2, 1] ,K =  [5, 3, 2, 1]$ 

$I =  [6, 3, 2, 1] , J =  [4, 3, 2, 1] ,K =  [6, 3, 2, 1]  $ 

$I =  [5, 4, 2, 1] , J =  [4, 3, 2, 1] ,K =  [5, 4, 2, 1]  $ 

$I =  [6, 4, 2, 1] , J =  [4, 3, 2, 1] ,K =  [6, 4, 2, 1]  $ 

$I =  [6, 5, 2, 1] , J =  [4, 3, 2, 1] ,K =  [6, 5, 2, 1] $ 

$I =  [5, 4, 3, 1] , J =  [4, 3, 2, 1] ,K =  [5, 4, 3, 1]  $ 

$I =  [6, 4, 3, 1] , J =  [4, 3, 2, 1] ,K =  [6, 4, 3, 1]  $ 

&

$I =  [6, 5, 3, 1] , J =  [4, 3, 2, 1] ,K =  [6, 5, 3, 1] $

$I =  [6, 5, 4, 1] , J =  [4, 3, 2, 1] ,K =  [6, 5, 4, 1] $ 

$I =  [5, 4, 3, 2] , J =  [4, 3, 2, 1] ,K =  [5, 4, 3, 2]  $ 

$I =  [6, 4, 3, 2] , J =  [4, 3, 2, 1] ,K =  [6, 4, 3, 2]$ 

$I =  [6, 5, 3, 2] , J =  [4, 3, 2, 1] ,K =  [6, 5, 3, 2]  $ 

$I =  [6, 5, 4, 2] , J =  [4, 3, 2, 1] ,K =  [6, 5, 4, 2]  $ 

$I =  [6, 5, 4, 3] , J =  [4, 3, 2, 1] ,K =  [6, 5, 4, 3]  $ 

$I =  [6, 3, 2, 1] , J =  [5, 3, 2, 1] ,K =  [5, 3, 2, 1]  $ 

$I =  [5, 4, 2, 1] , J =  [5, 3, 2, 1] ,K =  [5, 3, 2, 1] ,  $ 

$I =  [6, 4, 2, 1] , J =  [5, 3, 2, 1] ,K =  [6, 3, 2, 1] , $ 

$I =  [6, 4, 2, 1] , J =  [5, 3, 2, 1] ,K =  [5, 4, 2, 1] , $ 

$I =  [5, 4, 3, 1] , J =  [5, 3, 2, 1] ,K =  [5, 4, 2, 1] , $ 

$I =  [6, 5, 2, 1] , J =  [5, 3, 2, 1] ,K =  [6, 4, 2, 1] , $ 

$I =  [6, 4, 3, 1] , J =  [5, 3, 2, 1] ,K =  [6, 4, 2, 1] ,  $ 

$I =  [6, 5, 3, 1] , J =  [5, 3, 2, 1] ,K =  [6, 5, 2, 1] ,  $ 

$I =  [6, 4, 3, 1] , J =  [5, 3, 2, 1] ,K =  [5, 4, 3, 1] ,  $ 

$I =  [5, 4, 3, 2] , J =  [5, 3, 2, 1] ,K =  [5, 4, 3, 1] ,  $ 

$I =  [6, 5, 3, 1] , J =  [5, 3, 2, 1] ,K =  [6, 4, 3, 1] ,  $ 

$I =  [6, 4, 3, 2] , J =  [5, 3, 2, 1] ,K =  [6, 4, 3, 1] , $ 

$I =  [6, 5, 4, 1] , J =  [5, 3, 2, 1] ,K =  [6, 5, 3, 1] ,  $ 

$I =  [6, 5, 3, 2] , J =  [5, 3, 2, 1] ,K =  [6, 5, 3, 1] ,  $ 

$I =  [6, 5, 4, 2] , J =  [5, 3, 2, 1] ,K =  [6, 5, 4, 1] ,  $ 

$I =  [6, 4, 3, 2] , J =  [5, 3, 2, 1] ,K =  [5, 4, 3, 2] ,  $ 

$I =  [6, 5, 3, 2] , J =  [5, 3, 2, 1] ,K =  [6, 4, 3, 2] ,  $ 

$I =  [6, 5, 4, 2] , J =  [5, 3, 2, 1] ,K =  [6, 5, 3, 2] ,  $ 

$I =  [6, 5, 4, 3] , J =  [5, 3, 2, 1] ,K =  [6, 5, 4, 2] ,  $ 

$I =  [6, 5, 2, 1] , J =  [6, 3, 2, 1] ,K =  [6, 3, 2, 1] ,  $ 

$I =  [6, 4, 3, 1] , J =  [6, 3, 2, 1] ,K =  [5, 4, 2, 1] ,  $ 

$I =  [6, 5, 3, 1] , J =  [6, 3, 2, 1] ,K =  [6, 4, 2, 1] ,  $ 

$I =  [6, 5, 4, 1] , J =  [6, 3, 2, 1] ,K =  [6, 5, 2, 1] ,  $ 

$I =  [6, 4, 3, 2] , J =  [6, 3, 2, 1] ,K =  [5, 4, 3, 1] ,  $ 

$I =  [6, 5, 3, 2] , J =  [6, 3, 2, 1] ,K =  [6, 4, 3, 1] ,  $ 

$I =  [6, 5, 4, 2] , J =  [6, 3, 2, 1] ,K =  [6, 5, 3, 1] , $ 

$I =  [6, 5, 4, 3] , J =  [6, 3, 2, 1] ,K =  [6, 5, 4, 1] ,  $ 

$I =  [6, 5, 2, 1] , J =  [5, 4, 2, 1] ,K =  [5, 4, 2, 1] ,  $ 

$I =  [6, 4, 3, 1] , J =  [5, 4, 2, 1] ,K =  [5, 4, 2, 1] ,  $ 

$I =  [5, 4, 3, 2] , J =  [5, 4, 2, 1] ,K =  [5, 4, 2, 1] ,$ 

$I =  [6, 5, 3, 1] , J =  [5, 4, 2, 1] ,K =  [6, 4, 2, 1] ,  $ 

$I =  [6, 4, 3, 2] , J =  [5, 4, 2, 1] ,K =  [6, 4, 2, 1] ,  $ 

$I =  [6, 5, 3, 2] , J =  [5, 4, 2, 1] ,K =  [6, 5, 2, 1] ,  $ 

$I =  [6, 5, 3, 1] , J =  [5, 4, 2, 1] ,K =  [5, 4, 3, 1] ,  $ 

$I =  [6, 4, 3, 2] , J =  [5, 4, 2, 1] ,K =  [5, 4, 3, 1] , $ 

$I =  [6, 5, 4, 1] , J =  [5, 4, 2, 1] ,K =  [6, 4, 3, 1] , $ 

$I =  [6, 5, 3, 2] , J =  [5, 4, 2, 1] ,K =  [6, 4, 3, 1] ,  $ 

$I =  [6, 5, 4, 2] , J =  [5, 4, 2, 1] ,K =  [6, 5, 3, 1] , $ 

$I =  [6, 5, 3, 2] , J =  [5, 4, 2, 1] ,K =  [5, 4, 3, 2] ,  $ 

$I =  [6, 5, 4, 2] , J =  [5, 4, 2, 1] ,K =  [6, 4, 3, 2] ,  $ 

$I =  [6, 5, 4, 3] , J =  [5, 4, 2, 1] ,K =  [6, 5, 3, 2] ,  $ 

$I =  [6, 5, 4, 1] , J =  [6, 4, 2, 1] ,K =  [6, 4, 2, 1] ,  $ 

$I =  [6, 5, 3, 2] , J =  [6, 4, 2, 1] ,K =  [6, 4, 2, 1] , $ 

$I =  [6, 5, 4, 2] , J =  [6, 4, 2, 1] ,K =  [6, 5, 2, 1] , $ 

$I =  [6, 5, 3, 2] , J =  [6, 4, 2, 1] ,K =  [5, 4, 3, 1] , $ 

$I =  [6, 5, 4, 2] , J =  [6, 4, 2, 1] ,K =  [6, 4, 3, 1] , $ 

$I =  [6, 5, 4, 3] , J =  [6, 4, 2, 1] ,K =  [6, 5, 3, 1] ,  $ 

$I =  [6, 5, 4, 3] , J =  [6, 5, 2, 1] ,K =  [6, 5, 2, 1] ,  $ 

$I =  [6, 5, 4, 1] , J =  [5, 4, 3, 1] ,K =  [5, 4, 3, 1] ,  $ 

$I =  [6, 5, 3, 2] , J =  [5, 4, 3, 1] ,K =  [5, 4, 3, 1] ,  $ 

$I =  [6, 5, 4, 2] , J =  [5, 4, 3, 1] ,K =  [6, 4, 3, 1] ,  $ 

$I =  [6, 5, 4, 2] , J =  [5, 4, 3, 1] ,K =  [5, 4, 3, 2] ,  $ 

$I =  [6, 5, 4, 3] , J =  [5, 4, 3, 1] ,K =  [6, 4, 3, 2] ,  $ 

$I =  [6, 5, 4, 3] , J =  [6, 4, 3, 1] ,K =  [6, 4, 3, 1] ,  $ 

$I =  [6, 5, 4, 3] , J =  [5, 4, 3, 2] ,K =  [5, 4, 3, 2] ,  $ 

$I =  [5, 4, 3, 2, 1] , J =  [5, 4, 3, 2, 1] ,K =  [5, 4, 3, 2, 1] ,  $ 

$I =  [6, 4, 3, 2, 1] , J =  [5, 4, 3, 2, 1] ,K =  [6, 4, 3, 2, 1] ,  $ 

$I =  [6, 5, 3, 2, 1] , J =  [5, 4, 3, 2, 1] ,K =  [6, 5, 3, 2, 1] , $ 

$I =  [6, 5, 4, 2, 1] , J =  [5, 4, 3, 2, 1] ,K =  [6, 5, 4, 2, 1] , $ 

$I =  [6, 5, 4, 3, 1] , J =  [5, 4, 3, 2, 1] ,K =  [6, 5, 4, 3, 1] , $ 

$I =  [6, 5, 4, 3, 2] , J =  [5, 4, 3, 2, 1] ,K =  [6, 5, 4, 3, 2] ,  $ 

$I =  [6, 5, 3, 2, 1] , J =  [6, 4, 3, 2, 1] ,K =  [6, 4, 3, 2, 1] , $ 

$I =  [6, 5, 4, 2, 1] , J =  [6, 4, 3, 2, 1] ,K =  [6, 5, 3, 2, 1] , $ 

$I =  [6, 5, 4, 3, 1] , J =  [6, 4, 3, 2, 1] ,K =  [6, 5, 4, 2, 1] ,  $ 

$I =  [6, 5, 4, 3, 2] , J =  [6, 4, 3, 2, 1] ,K =  [6, 5, 4, 3, 1] ,  $ 

$I =  [6, 5, 4, 3, 1] , J =  [6, 5, 3, 2, 1] ,K =  [6, 5, 3, 2, 1] , $ 

$I =  [6, 5, 4, 3, 2] , J =  [6, 5, 3, 2, 1] ,K =  [6, 5, 4, 2, 1] ,  $ 
\end{tabular}

\begin{enumerate}
    \item [$s= 1$:] $\lambda_i \le \min_{1\le j \le i}(\mu_j+\nu_{i+1-j})$ for $i \in [6]$
    \item [$s = 2$:] 
    $\lambda_1+\lambda_2 \le \mu_1+\mu_2+\nu_1+\nu_2$

    $\lambda_1+\lambda_3\le \min(\mu_1+\mu_2+\nu_1+\nu_3, \mu_1+\mu_3+\nu_1+\nu_2) $
    
    $\lambda_1+\lambda_4 \le \min(\mu_1+\mu_2+\nu_1+\nu_4, \mu_1+\mu_3+\nu_1+\nu_3, \mu_1+\mu_4+\nu_1+\nu_2) $

    $\lambda_1+\lambda_5 \le \min(\mu_1+\mu_2+\nu_1+\nu_5, \mu_1+\mu_3+\nu_1+\nu_4, \mu_1+\mu_4+\nu_1+\nu_3, \mu_1+\mu_5+\nu_1+\nu_2) $

    $\lambda_1+\lambda_6 \le \min(\mu_1+\mu_2+\nu_1+\nu_6, \mu_1+\mu_3+\nu_1+\nu_5, \mu_1+\mu_4+\nu_1+\nu_4, \mu_1+\mu_5+\nu_1+\nu_3, \mu_1+\mu_6+\nu_1+\nu_2) $

    $\lambda_2+\lambda_3 \le \min(\mu_1+\mu_2+\nu_2+\nu_3, \mu_1+\mu_3+\nu_1+\nu_3,\mu_2+\mu_3+\nu_1+\nu_2) $
    
    $\lambda_2+\lambda_4 \le \min(\mu_1+\mu_2+\nu_2+\nu_4, \mu_1+\mu_3+\nu_1+\nu_4,,\mu_1+\mu_3+\nu_2+\nu_3,\mu_2+\mu_3+\nu_1+\nu_3, \mu_1+\mu_4+\nu_1+\nu_3, \mu_2+\mu_4+\nu_1+\nu_2) $

    $\lambda_2+\lambda_5 \le \min(\mu_1+\mu_2+\nu_2+\nu_5,\mu_1+\mu_3+\nu_1+\nu_5,\mu_1+\mu_3+\nu_2+\nu_4,\mu_2+\mu_4+\nu_1+\nu_3,\mu_1+\mu_4+\nu_2+\nu_3,\mu_1+\mu_4+\nu_1+\nu_4,\mu_2+\mu_3+\nu_1+\nu_4, \mu_1+\mu_5+\nu_1+\nu_3,  , \mu_2+\mu_5+\nu_1+\nu_2) $

    $\lambda_2+\lambda_6 \le \min(\mu_1+\mu_2+\nu_2+\nu_6,\mu_1+\mu_3+\nu_1+\nu_6,\mu_1+\mu_3+\nu_2+\nu_5,\mu_2+\mu_5+\nu_1+\nu_3,\mu_1+\mu_4+\nu_1+\nu_5, \mu_1+\mu_5+\nu_1+\nu_4,\mu_1+\mu_4+\nu_2+\nu_4,\mu_2+\mu_4+\nu_1+\nu_4,\mu_1+\mu_5+\nu_2+\nu_3,\mu_2+\mu_3+\nu_1+\nu_5,\mu_1+\mu_6+\nu_1+\nu_3,  , \mu_2+\mu_6+\nu_1+\nu_2) $

    $\lambda_3+\lambda_4 \le \min(\mu_1+\mu_2+\nu_3+\nu_4, \mu_1+\mu_3+\nu_2+\nu_4,\mu_1+\mu_4+\nu_1+\nu_4,\mu_2+\mu_3+\nu_2+\nu_3, \mu_2+\mu_4+\nu_1+\nu_3,\mu_3+\mu_4+\nu_1+\nu_2) $

    $\lambda_3+\lambda_5 \le \min(\mu_1+\mu_2+\nu_3+\nu_5, \mu_1+\mu_3+\nu_2+\nu_5,\mu_1+\mu_3+\nu_3+\nu_4,\mu_1+\mu_4+\nu_1+\nu_5,\mu_1+\mu_4+\nu_2+\nu_4,\mu_1+\mu_5+\nu_1+\nu_4,\mu_2+\mu_3+\nu_2+\nu_4,\mu_2+\mu_4+\nu_2+\nu_3,\mu_2+\mu_4+\nu_1+\nu_4,\mu_3+\mu_4+\nu_1+\nu_3, \mu_2+\mu_5+\nu_1+\nu_3,\mu_3+\mu_5+\nu_1+\nu_2) $

    $\lambda_3+\lambda_6 \le \min(
    \mu_1+\mu_2+\nu_3+\nu_6, 
    \mu_1+\mu_3+\nu_2+\nu_6,
    \mu_1+\mu_3+\nu_3+\nu_5,
    \mu_1+\mu_4+\nu_1+\nu_6,
    \mu_1+\mu_4+\nu_2+\nu_5,
    \mu_1+\mu_4+\nu_3+\nu_4,
    \mu_1+\mu_5+\nu_1+\nu_5,
    \mu_1+\mu_5+\nu_2+\nu_4,
    \mu_1+\mu_6+\nu_1+\nu_4,
    \mu_2+\mu_3+\nu_2+\nu_5,
    \mu_2+\mu_4+\nu_1+\nu_5,
    \mu_2+\mu_4+\nu_2+\nu_4,
    \mu_2+\mu_5+\nu_2+\nu_3,
    \mu_2+\mu_5+\nu_1+\nu_4,
    \mu_2+\mu_6+\nu_1+\nu_3,
    \mu_3+\mu_4+\nu_1+\nu_4, 
    \mu_3+\mu_5+\nu_1+\nu_3, 
    \mu_3+\mu_6+\nu_1+\nu_2) $
    
     $\lambda_4+\lambda_5 \le \min(
    \mu_1+\mu_2+\nu_4+\nu_5, 
    \mu_1+\mu_3+\nu_3+\nu_5, 
    \mu_1+\mu_4+\nu_2+\nu_5, 
    \mu_1+\mu_5+\nu_1+\nu_5, 
    \mu_2+\mu_3+\nu_3+\nu_4, 
    \mu_2+\mu_4+\nu_2+\nu_4, 
    \mu_2+\mu_5+\nu_1+\nu_4, 
    \mu_3+\mu_4+\nu_2+\nu_3, 
    \mu_3+\mu_5+\nu_1+\nu_3, 
    \mu_4+\mu_5+\nu_1+\nu_2) $
    
    $\lambda_4+\lambda_6 \le \min(
    \mu_1+\mu_2+\nu_4+\nu_6, 
    \mu_1+\mu_3+\nu_3+\nu_6,
    \mu_1+\mu_3+\nu_4+\nu_5,
    \mu_1+\mu_4+\nu_2+\nu_6,
    \mu_1+\mu_5+\nu_1+\nu_6,
    \mu_1+\mu_5+\nu_2+\nu_5,
    \mu_1+\mu_6+\nu_1+\nu_5,
    \mu_2+\mu_3+\nu_3+\nu_5,
    \mu_2+\mu_4+\nu_3+\nu_4,
    \mu_2+\mu_4+\nu_2+\nu_5,
    \mu_2+\mu_5+\nu_2+\nu_4,
    \mu_2+\mu_5+\nu_1+\nu_5,
    \mu_2+\mu_6+\nu_1+\nu_4,
    \mu_3+\mu_4+\nu_2+\nu_4,
    \mu_3+\mu_5+\nu_2+\nu_3,
    \mu_3+\mu_5+\nu_1+\nu_4,
    \mu_3+\mu_6+\nu_1+\nu_3, 
    \mu_4+\mu_5+\nu_1+\nu_3, 
    \mu_4+\mu_6+\nu_1+\nu_2) $

     $\lambda_5+\lambda_6 \le \min(
    \mu_1+\mu_2+\nu_5+\nu_6, 
    \mu_1+\mu_3+\nu_4+\nu_6,
    \mu_1+\mu_4+\nu_3+\nu_6,
    \mu_1+\mu_5+\nu_2+\nu_6,
    \mu_1+\mu_6+\nu_1+\nu_6,
    \mu_2+\mu_6+\nu_1+\nu_5,
    \mu_2+\mu_3+\nu_4+\nu_5,
    \mu_2+\mu_4+\nu_3+\nu_5,
    \mu_2+\mu_5+\nu_2+\nu_5,
    \mu_3+\mu_4+\nu_3+\nu_4,
    \mu_3+\mu_5+\nu_2+\nu_4,
    \mu_4+\mu_5+\nu_2+\nu_3,
    \mu_3+\mu_6+\nu_1+\nu_4, 
    \mu_4+\mu_6+\nu_1+\nu_3, 
    \mu_5+\mu_6+\nu_1+\nu_2) $

    \item [$s=3$:] 
    $\lambda_1+\lambda_2+\lambda_i \le \min(\mu_1+\mu_2+\mu_3+
    \nu_1+\nu_2+\nu_i,\mu_1+\mu_2+\mu_i+
    \nu_1+\nu_2+\nu_3)$ for $i \in \{3,4\}$
    $\lambda_1+\lambda_2+\lambda_5 \le \min(\mu_1+\mu_2+\mu_3+\nu_1+\nu_2+\nu_5, \mu_1+\mu_2+\mu_4+\nu_1+\nu_2+\nu_4, \mu_1+\mu_2+\mu_5+\nu_1+\nu_2+\nu_3)$

    $\lambda_1+\lambda_2+\lambda_6 \le \min(\mu_1+\mu_2+\mu_3+\nu_1+\nu_2+\nu_6, \mu_1+\mu_2+\mu_4+\nu_1+\nu_2+\nu_5, 
    \mu_1+\mu_2+\mu_5+\nu_1+\nu_2+\nu_4, \mu_1+\mu_2+\mu_6+\nu_1+\nu_2+\nu_3)$

    $\lambda_1+\lambda_3+\lambda_4 \le \min(\mu_1+\mu_2+\mu_3+\nu_1+\nu_3+\nu_4, \mu_1+\mu_2+\mu_4+\nu_1+\nu_2+\nu_4, 
    \mu_1+\mu_3+\mu_4+\nu_1+\nu_2+\nu_3)$

    $\lambda_1+\lambda_3+\lambda_5 \le \min(\mu_1+\mu_2+\mu_3+\nu_1+\nu_3+\nu_5, \mu_1+\mu_2+\mu_4+\nu_1+\nu_2+\nu_5, 
    \mu_1+\mu_2+\mu_4+\nu_1+\nu_3+\nu_4, 
    \mu_1+\mu_2+\mu_5+\nu_1+\nu_2+\nu_4,
    \mu_1+\mu_3+\mu_4+\nu_1+\nu_2+\nu_4, 
    \mu_1+\mu_3+\mu_5+\nu_1+\nu_2+\nu_3)$

    $\lambda_1+\lambda_3+\lambda_6 \le \min(\mu_1+\mu_2+\mu_3+\nu_1+\nu_3+\nu_6, \mu_1+\mu_2+\mu_4+\nu_1+\nu_2+\nu_6, 
    \mu_1+\mu_2+\mu_4+\nu_1+\nu_3+\nu_5, 
    \mu_1+\mu_2+\mu_5+\nu_1+\nu_2+\nu_5,
    \mu_1+\mu_2+\mu_5+\nu_1+\nu_3+\nu_4,
    \mu_1+\mu_2+\mu_6+\nu_1+\nu_2+\nu_4,
    \mu_1+\mu_3+\mu_4+\nu_1+\nu_2+\nu_5, 
    \mu_1+\mu_3+\mu_5+\nu_1+\nu_2+\nu_4, 
    \mu_1+\mu_3+\mu_6+\nu_1+\nu_2+\nu_3)$
    
    $\lambda_1+\lambda_4+\lambda_5 \le \min(\mu_1+\mu_2+\mu_3+\nu_1+\nu_4+\nu_5, \mu_1+\mu_2+\mu_4+\nu_1+\nu_3+\nu_5, 
    \mu_1+\mu_2+\mu_5+\nu_1+\nu_2+\nu_5,
    \mu_1+\mu_3+\mu_4+\nu_1+\nu_3+\nu_4,
    \mu_1+\mu_3+\mu_5+\nu_1+\nu_2+\nu_4, 
    \mu_1+\mu_4+\mu_5+\nu_1+\nu_2+\nu_3)$

    $\lambda_1+\lambda_4+\lambda_6 \le \min(\mu_1+\mu_2+\mu_3+\nu_1+\nu_4+\nu_6, 
    \mu_1+\mu_2+\mu_4+\nu_1+\nu_3+\nu_6,    \mu_1+\mu_2+\mu_4+\nu_1+\nu_4+\nu_5,
     \mu_1+\mu_2+\mu_5+\nu_1+\nu_2+\nu_6,  
    \mu_1+\mu_2+\mu_5+\nu_1+\nu_3+\nu_5,  
    \mu_1+\mu_3+\mu_4+\nu_1+\nu_3+\nu_5,  
    \mu_1+\mu_3+\mu_5+\nu_1+\nu_3+\nu_4,  
    \mu_1+\mu_3+\mu_5+\nu_1+\nu_2+\nu_5,  
     \mu_1+\mu_2+\mu_6+\nu_1+\nu_2+\nu_5,
    \mu_1+\mu_4+\mu_5+\nu_1+\nu_2+\nu_4, 
    \mu_1+\mu_3+\mu_6+\nu_1+\nu_2+\nu_4, 
    \mu_1+\mu_4+\mu_6+\nu_1+\nu_2+\nu_3)$
    
    $\lambda_1+\lambda_5+\lambda_6 \le \min(\mu_1+\mu_2+\mu_3+\nu_1+\nu_5+\nu_6,
    \mu_1+\mu_2+\mu_4+\nu_1+\nu_4+\nu_6,
    \mu_1+\mu_2+\mu_5+\nu_1+\nu_3+\nu_6,  
    \mu_1+\mu_2+\mu_6+\nu_1+\nu_2+\nu_6,
        \mu_1+\mu_3+\mu_4+\nu_1+\nu_4+\nu_5,  
    \mu_1+\mu_3+\mu_5+\nu_1+\nu_3+\nu_5,  
    \mu_1+\mu_4+\mu_5+\nu_1+\nu_3+\nu_4, 
    \mu_1+\mu_3+\mu_6+\nu_1+\nu_2+\nu_5,  
    \mu_1+\mu_4+\mu_6+\nu_1+\nu_2+\nu_4, 
    \mu_1+\mu_5+\mu_6+\nu_1+\nu_2+\nu_3)$

    $\lambda_2+\lambda_3+\lambda_4 \le \min(\mu_1+\mu_2+\mu_3+\nu_2+\nu_3+\nu_4,
    \mu_1+\mu_2+\mu_4+\nu_1+\nu_3+\nu_4,
    \mu_1+\mu_3+\mu_4+\nu_1+\nu_2+\nu_4,    
    \mu_2+\mu_3+\mu_4+\nu_1+\nu_2+\nu_3)$
    
    $\lambda_2+\lambda_3+\lambda_5 \le \min(\mu_1+\mu_2+\mu_3+\nu_2+\nu_3+\nu_5,
    \mu_1+\mu_2+\mu_4+\nu_1+\nu_3+\nu_5,
    \mu_1+\mu_2+\mu_4+\nu_2+\nu_3+\nu_4,
    \mu_1+\mu_2+\mu_5+\nu_1+\nu_3+\nu_4,
    \mu_1+\mu_3+\mu_4+\nu_1+\nu_2+\nu_5,   
        \mu_1+\mu_3+\mu_4+\nu_1+\nu_3+\nu_4,   
    \mu_2+\mu_3+\mu_4+\nu_1+\nu_2+\nu_4,    
    \mu_1+\mu_3+\mu_5+\nu_1+\nu_2+\nu_4,    
    \mu_2+\mu_3+\mu_5+\nu_1+\nu_2+\nu_3)$
    
    $\lambda_2+\lambda_3+\lambda_6 \le \min(\mu_1+\mu_2+\mu_3+\nu_2+\nu_3+\nu_6,
    \mu_1+\mu_2+\mu_4+\nu_1+\nu_3+\nu_6,
    \mu_1+\mu_2+\mu_4+\nu_2+\nu_3+\nu_5,
    \mu_1+\mu_2+\mu_4+\nu_2+\nu_3+\nu_5,
        \mu_1+\mu_2+\mu_5+\nu_1+\nu_3+\nu_5,
    \mu_1+\mu_2+\mu_6+\nu_1+\nu_3+\nu_4,
        \mu_1+\mu_3+\mu_5+\nu_1+\nu_3+\nu_4,
        \mu_1+\mu_3+\mu_4+\nu_1+\nu_3+\nu_5,
        \mu_1+\mu_2+\mu_5+\nu_2+\nu_3+\nu_4,
        \mu_2+\mu_3+\mu_4+\nu_1+\nu_2+\nu_5,
    \mu_1+\mu_3+\mu_5+\nu_1+\nu_2+\nu_5,
        \mu_1+\mu_3+\mu_4+\nu_1+\nu_2+\nu_6,
        \mu_2+\mu_3+\mu_5+\nu_1+\nu_2+\nu_4,    
    \mu_2+\mu_3+\mu_5+\nu_1+\nu_2+\nu_4,   
    \mu_1+\mu_3+\mu_6+\nu_1+\nu_2+\nu_4,    
    \mu_2+\mu_3+\mu_6+\nu_1+\nu_2+\nu_3)$
       
    $\lambda_2+\lambda_4+\lambda_5 \le \min(\mu_1+\mu_2+\mu_3+\nu_2+\nu_4+\nu_5,
    \mu_1+\mu_2+\mu_4+\nu_1+\nu_4+\nu_5,
    \mu_1+\mu_2+\mu_4+\nu_2+\nu_3+\nu_5,
    \mu_1+\mu_2+\mu_5+\nu_1+\nu_3+\nu_5,
        \mu_1+\mu_3+\mu_4+\nu_1+\nu_3+\nu_5,
    \mu_2+\mu_3+\mu_4+\nu_1+\nu_3+\nu_4,
    \mu_1+\mu_3+\mu_4+\nu_2+\nu_3+\nu_4,
        \mu_1+\mu_3+\mu_5+\nu_1+\nu_3+\nu_4,
        \mu_1+\mu_3+\mu_5+\nu_1+\nu_2+\nu_5,    
    \mu_2+\mu_3+\mu_5+\nu_1+\nu_2+\nu_4,   
    \mu_1+\mu_4+\mu_5+\nu_1+\nu_2+\nu_4,
    \mu_2+\mu_4+\mu_5+\nu_1+\nu_2+\nu_3)$

    $\lambda_2+\lambda_4+\lambda_6 \le \min(\mu_1+\mu_2+\mu_3+\nu_2+\nu_4+\nu_6,
    \mu_1+\mu_2+\mu_4+\nu_1+\nu_4+\nu_6,
    \mu_1+\mu_2+\mu_4+\nu_2+\nu_3+\nu_6,
        \mu_1+\mu_2+\mu_4+\nu_2+\nu_4+\nu_5,
    \mu_1+\mu_2+\mu_5+\nu_1+\nu_3+\nu_6,
    \mu_1+\mu_2+\mu_5+\nu_1+\nu_4+\nu_5,
    \mu_1+\mu_2+\mu_5+\nu_2+\nu_3+\nu_5,    \mu_1+\mu_3+\mu_6+\nu_1+\nu_2+\nu_5,
    \mu_1+\mu_4+\mu_5+\nu_1+\nu_2+\nu_5,
    \mu_2+\mu_3+\mu_5+\nu_1+\nu_2+\nu_5,
     \mu_1+\mu_3+\mu_5+\nu_2+\nu_3+\nu_4,
    \mu_2+\mu_3+\mu_4+\nu_1+\nu_3+\nu_5,
    \mu_1+\mu_3+\mu_5+\nu_1+\nu_3+\nu_5,    
    \mu_1+\mu_2+\mu_6+\nu_1+\nu_3+\nu_5,
    \mu_1+\mu_3+\mu_5+\nu_1+\nu_2+\nu_6,
    \mu_1+\mu_3+\mu_4+\nu_2+\nu_3+\nu_6,    \mu_1+\mu_3+\mu_6+\nu_1+\nu_3+\nu_4,
    \mu_1+\mu_4+\mu_5+\nu_1+\nu_3+\nu_4,
    \mu_2+\mu_3+\mu_5+\nu_1+\nu_3+\nu_4,
    \mu_1+\mu_3+\mu_4+\nu_1+\nu_4+\nu_5,
    \mu_1+\mu_3+\mu_4+\nu_2+\nu_3+\nu_5,   
    \mu_2+\mu_3+\mu_6+\nu_1+\nu_2+\nu_4,
    \mu_2+\mu_4+\mu_5+\nu_1+\nu_2+\nu_4,
    \mu_1+\mu_4+\mu_6+\nu_1+\nu_2+\nu_4,
    \mu_2+\mu_4+\mu_6+\nu_1+\nu_2+\nu_3)$

    $\lambda_2+\lambda_5+\lambda_6 \le \min(\mu_1+\mu_2+\mu_3+\nu_2+\nu_5+\nu_6,
    \mu_1+\mu_2+\mu_4+\nu_1+\nu_5+\nu_6,
    \mu_1+\mu_2+\mu_4+\nu_2+\nu_4+\nu_6,
    \mu_1+\mu_2+\mu_5+\nu_1+\nu_4+\nu_6,
    \mu_1+\mu_2+\mu_5+\nu_2+\nu_3+\nu_6, 
    \mu_1+\mu_2+\mu_6+\nu_1+\nu_3+\nu_6,
    \mu_1+\mu_3+\mu_4+\nu_1+\nu_4+\nu_6,  
    \mu_1+\mu_3+\mu_4+\nu_2+\nu_4+\nu_5,
    \mu_1+\mu_3+\mu_5+\nu_1+\nu_3+\nu_6,
    \mu_1+\mu_3+\mu_6+\nu_1+\nu_3+\nu_5,
    \mu_1+\mu_3+\mu_5+\nu_1+\nu_4+\nu_5,
    \mu_1+\mu_4+\mu_5+\nu_1+\nu_3+\nu_5,
    \mu_1+\mu_3+\mu_5+\nu_2+\nu_3+\nu_5,
    \mu_2+\mu_3+\mu_5+\nu_1+\nu_3+\nu_5,
    \mu_1+\mu_4+\mu_5+\nu_2+\nu_3+\nu_4,
    \mu_2+\mu_3+\mu_4+\nu_1+\nu_4+\nu_5,
    \mu_2+\mu_4+\mu_5+\nu_1+\nu_3+\nu_4,
    \mu_1+\mu_4+\mu_6+\nu_1+\nu_3+\nu_4, 
    \mu_1+\mu_3+\mu_6+\nu_1+\nu_2+\nu_6,
    \mu_2+\mu_3+\mu_6+\nu_1+\nu_2+\nu_5,
    \mu_1+\mu_4+\mu_6+\nu_1+\nu_2+\nu_5,
    \mu_2+\mu_4+\mu_6+\nu_1+\nu_2+\nu_4,
    \mu_1+\mu_5+\mu_6+\nu_1+\nu_2+\nu_4,
    \mu_2+\mu_5+\mu_6+\nu_1+\nu_2+\nu_3)$
    
    $\lambda_3+\lambda_4+\lambda_5 \le \min(\mu_1+\mu_2+\mu_3+\nu_3+\nu_4+\nu_5,
    \mu_1+\mu_2+\mu_4+\nu_2+\nu_4+\nu_5,
    \mu_1+\mu_2+\mu_5+\nu_1+\nu_4+\nu_5,
    \mu_1+\mu_3+\mu_4+\nu_2+\nu_3+\nu_5,
    \mu_1+\mu_3+\mu_5+\nu_1+\nu_3+\nu_5,
    \mu_2+\mu_3+\mu_4+\nu_2+\nu_3+\nu_4,
    \mu_2+\mu_3+\mu_5+\nu_1+\nu_3+\nu_4,
    \mu_1+\mu_4+\mu_5+\nu_1+\nu_2+\nu_5,
    \mu_2+\mu_4+\mu_5+\nu_1+\nu_2+\nu_4,
    \mu_3+\mu_4+\mu_5+\nu_1+\nu_2+\nu_3)$
    
    $\lambda_3+\lambda_4+\lambda_6 \le \min(\mu_1+\mu_2+\mu_3+\nu_3+\nu_4+\nu_6,
    \mu_1+\mu_2+\mu_4+\nu_2+\nu_4+\nu_6,
    \mu_1+\mu_2+\mu_4+\nu_3+\nu_4+\nu_5,
    \mu_1+\mu_2+\mu_5+\nu_1+\nu_4+\nu_6,
    \mu_1+\mu_2+\mu_5+\nu_2+\nu_4+\nu_5,
    \mu_1+\mu_2+\mu_6+\nu_1+\nu_4+\nu_5,
    \mu_1+\mu_3+\mu_4+\nu_2+\nu_3+\nu_6,
    \mu_1+\mu_3+\mu_5+\nu_1+\nu_3+\nu_6,
    \mu_1+\mu_3+\mu_6+\nu_1+\nu_3+\nu_5,
    \mu_1+\mu_3+\mu_5+\nu_1+\nu_4+\nu_5,
    \mu_1+\mu_4+\mu_5+\nu_1+\nu_3+\nu_5,
    \mu_2+\mu_3+\mu_4+\nu_2+\nu_3+\nu_5,
    \mu_2+\mu_3+\mu_5+\nu_2+\nu_3+\nu_4,
    \mu_1+\mu_3+\mu_5+\nu_2+\nu_3+\nu_5,
    \mu_2+\mu_3+\mu_5+\nu_1+\nu_3+\nu_5,
    \mu_1+\mu_3+\mu_4+\nu_2+\nu_4+\nu_5,
    \mu_2+\mu_4+\mu_5+\nu_1+\nu_3+\nu_4,
    \mu_2+\mu_3+\mu_6+\nu_1+\nu_3+\nu_4,
    \mu_1+\mu_4+\mu_5+\nu_1+\nu_2+\nu_6,
    \mu_2+\mu_4+\mu_5+\nu_1+\nu_2+\nu_5,
    \mu_1+\mu_4+\mu_6+\nu_1+\nu_2+\nu_5,
    \mu_3+\mu_4+\mu_5+\nu_1+\nu_2+\nu_4,
    \mu_2+\mu_4+\mu_6+\nu_1+\nu_2+\nu_4,
    \mu_3+\mu_4+\mu_6+\nu_1+\nu_2+\nu_3)$    
    
    $\lambda_3+\lambda_5+\lambda_6 \le \min(\mu_1+\mu_2+\mu_3+\nu_3+\nu_5+\nu_6,
    \mu_1+\mu_2+\mu_4+\nu_2+\nu_5+\nu_6,
    \mu_1+\mu_2+\mu_4+\nu_3+\nu_4+\nu_6,
    \mu_1+\mu_2+\mu_5+\nu_1+\nu_5+\nu_6,
    \mu_1+\mu_2+\mu_5+\nu_2+\nu_4+\nu_6,
    \mu_1+\mu_2+\mu_6+\nu_1+\nu_4+\nu_6,
    \mu_1+\mu_3+\mu_4+\nu_2+\nu_4+\nu_6,
    \mu_1+\mu_3+\mu_4+\nu_3+\nu_4+\nu_5,
    \mu_1+\mu_4+\mu_6+\nu_1+\nu_3+\nu_5,
    \mu_1+\mu_3+\mu_5+\nu_1+\nu_4+\nu_6,
    \mu_1+\mu_3+\mu_5+\nu_2+\nu_3+\nu_6,
    \mu_2+\mu_3+\mu_6+\nu_1+\nu_3+\nu_5,
    \mu_1+\mu_3+\mu_5+\nu_2+\nu_4+\nu_5,
    \mu_2+\mu_4+\mu_5+\nu_1+\nu_3+\nu_5,
    \mu_1+\mu_3+\mu_6+\nu_1+\nu_3+\nu_6,
    \mu_1+\mu_3+\mu_6+\nu_2+\nu_4+\nu_5,
    \mu_1+\mu_4+\mu_5+\nu_1+\nu_3+\nu_6,
    \mu_1+\mu_4+\mu_5+\nu_2+\nu_3+\nu_5,
    \mu_2+\mu_3+\mu_5+\nu_1+\nu_4+\nu_5,
    \mu_2+\mu_3+\mu_4+\nu_2+\nu_4+\nu_5,
    \mu_2+\mu_4+\mu_5+\nu_2+\nu_3+\nu_4,
    \mu_2+\mu_3+\mu_5+\nu_2+\nu_3+\nu_5,
    \mu_3+\mu_4+\mu_5+\nu_1+\nu_3+\nu_4,
    \mu_2+\mu_4+\mu_6+\nu_1+\nu_3+\nu_4,
    \mu_1+\mu_4+\mu_6+\nu_1+\nu_2+\nu_6,
    \mu_2+\mu_4+\mu_6+\nu_1+\nu_2+\nu_5,    
    \mu_1+\mu_5+\mu_6+\nu_1+\nu_2+\nu_5,
    \mu_3+\mu_4+\mu_6+\nu_1+\nu_2+\nu_4,
    \mu_2+\mu_5+\mu_6+\nu_1+\nu_2+\nu_4,
    \mu_3+\mu_5+\mu_6+\nu_1+\nu_2+\nu_3)$

    $\lambda_4+\lambda_5+\lambda_6 \le \min(\mu_1+\mu_2+\mu_3+\nu_4+\nu_5+\nu_6,   
    \mu_1+\mu_2+\mu_4+\nu_3+\nu_5+\nu_6,
    \mu_1+\mu_2+\mu_5+\nu_2+\nu_5+\nu_6,
    \mu_1+\mu_2+\mu_6+\nu_1+\nu_5+\nu_6,
    \mu_1+\mu_3+\mu_4+\nu_3+\nu_4+\nu_6,
    \mu_1+\mu_3+\mu_5+\nu_2+\nu_4+\nu_6,
    \mu_1+\mu_3+\mu_4+\nu_2+\nu_4+\nu_6,
    \mu_1+\mu_4+\mu_6+\nu_1+\nu_3+\nu_6,
    \mu_1+\mu_3+\mu_6+\nu_1+\nu_4+\nu_6,
    \mu_1+\mu_4+\mu_5+\nu_2+\nu_3+\nu_6,
    \mu_2+\mu_3+\mu_6+\nu_1+\nu_4+\nu_5,
    \mu_2+\mu_3+\mu_4+\nu_3+\nu_4+\nu_5,
    \mu_3+\mu_4+\mu_5+\nu_2+\nu_3+\nu_4,
    \mu_2+\mu_3+\mu_5+\nu_2+\nu_4+\nu_5,
    \mu_2+\mu_4+\mu_5+\nu_2+\nu_3+\nu_5,
    \mu_2+\mu_4+\mu_6+\nu_1+\nu_3+\nu_5,
    \mu_3+\mu_4+\mu_6+\nu_1+\nu_3+\nu_4,
    \mu_1+\mu_5+\mu_6+\nu_1+\nu_2+\nu_6,
    \mu_2+\mu_5+\mu_6+\nu_1+\nu_2+\nu_5,
    \mu_3+\mu_5+\mu_6+\nu_1+\nu_2+\nu_4,    
    \mu_4+\mu_5+\mu_6+\nu_1+\nu_2+\nu_3)$

    \item [$s=4$:] 
    $\lambda_5+\lambda_6 \ge \mu_5+\mu_6+\nu_5+\nu_6$
    
    $\lambda_4+\lambda_6 \ge \max(\mu_4+\mu_6+\nu_5+\nu_6,\mu_5+\mu_6+\nu_4+\nu_6)$

    $\lambda_4+\lambda_5 \ge \max(\mu_4+\mu_5+\nu_5+\nu_6,\mu_4+\mu_6+\nu_4+\nu_6, \mu_5+\mu_6+\nu_4+\nu_5)$

    $\lambda_3+\lambda_6 \ge \max(\mu_3+\mu_6+\nu_5+\nu_6,\mu_4+\mu_6+\nu_4+\nu_6, \mu_5+\mu_6+\nu_3+\nu_6)$

    $\lambda_3+\lambda_5 \ge \max(\mu_3+\mu_5+\nu_5+\nu_6,\mu_4+\mu_5+\nu_4+\nu_6,\mu_4+\mu_6+\nu_4+\nu_5, \mu_4+\mu_6+\nu_3+\nu_6,\mu_3+\mu_6+\nu_4+\nu_6,\mu_5+\mu_6+\nu_3+\nu_5)$

    $\lambda_3+\lambda_4 \ge \max(\mu_3+\mu_4+\nu_5+\nu_6,
    \mu_3+\mu_5+\nu_4+\nu_6,
    \mu_4+\mu_6+\nu_3+\nu_5, 
    \mu_4+\mu_5+\nu_4+\nu_5, 
    \mu_3+\mu_6+\nu_3+\nu_6,
    \mu_5+\mu_6+\nu_3+\nu_4)$
    
    $\lambda_2+\lambda_6 \ge 
    \max(\mu_2+\mu_6+\nu_5+\nu_6,
    \mu_3+\mu_6+\nu_4+\nu_6,
    \mu_4+\mu_6+\nu_3+\nu_6, 
    \mu_5+\mu_6+\nu_2+\nu_6)$
    
    $\lambda_2+\lambda_5 \ge 
    \max(\mu_2+\mu_5+\nu_5+\nu_6,
    \mu_3+\mu_5+\nu_4+\nu_6,
    \mu_4+\mu_6+\nu_3+\nu_5, 
    \mu_2+\mu_6+\nu_4+\nu_6,
    \mu_4+\mu_6+\nu_2+\nu_6,     
    \mu_3+\mu_6+\nu_4+\nu_5,
    \mu_3+\mu_6+\nu_3+\nu_6,
    \mu_4+\mu_5+\nu_3+\nu_6, 
    \mu_5+\mu_6+\nu_2+\nu_5)$

    $\lambda_2+\lambda_4 \ge 
    \max(\mu_2+\mu_4+\nu_5+\nu_6,
    \mu_2+\mu_5+\nu_4+\nu_6,
    \mu_4+\mu_6+\nu_2+\nu_5, 
    \mu_3+\mu_5+\nu_4+\nu_5,
    \mu_4+\mu_5+\nu_3+\nu_5, 
    \mu_3+\mu_5+\nu_3+\nu_6,
    \mu_3+\mu_6+\nu_3+\nu_5, 
    \mu_2+\mu_6+\nu_3+\nu_6,
    \mu_3+\mu_6+\nu_2+\nu_6, 
    \mu_3+\mu_4+\nu_4+\nu_6,
    \mu_4+\mu_6+\nu_3+\nu_4, 
    \mu_5+\mu_6+\nu_2+\nu_4)$

    $\lambda_2+\lambda_3 \ge 
    \max(\mu_2+\mu_3+\nu_5+\nu_6,
    \mu_2+\mu_4+\nu_4+\nu_6,
    \mu_4+\mu_6+\nu_2+\nu_4,
    \mu_3+\mu_4+\nu_4+\nu_5,
    \mu_4+\mu_5+\nu_3+\nu_4,
    \mu_2+\mu_5+\nu_3+\nu_6,
    \mu_3+\mu_6+\nu_2+\nu_5,
    \mu_3+\mu_5+\nu_3+\nu_5,
    \mu_2+\mu_6+\nu_2+\nu_6, 
    \mu_5+\mu_6+\nu_2+\nu_3)$    
    
    $\lambda_1+\lambda_6 \ge 
    \max(\mu_1+\mu_6+\nu_5+\nu_6,
    \mu_2+\mu_6+\nu_4+\nu_6,
    \mu_4+\mu_6+\nu_2+\nu_6,
    \mu_3+\mu_6+\nu_3+\nu_6, 
    \mu_5+\mu_6+\nu_1+\nu_6)$
    
    $\lambda_1+\lambda_5 \ge 
    \max(\mu_1+\mu_5+\nu_5+\nu_6,
    \mu_2+\mu_5+\nu_4+\nu_6,
    \mu_4+\mu_6+\nu_2+\nu_5,
    \mu_1+\mu_6+\nu_4+\nu_6,
    \mu_4+\mu_6+\nu_1+\nu_6,    
    \mu_2+\mu_6+\nu_4+\nu_5,
    \mu_4+\mu_5+\nu_2+\nu_6,
    \mu_3+\mu_6+\nu_3+\nu_5,
    \mu_3+\mu_5+\nu_3+\nu_6,
    \mu_3+\mu_6+\nu_2+\nu_6,
    \mu_2+\mu_6+\nu_3+\nu_6,
    \mu_5+\mu_6+\nu_1+\nu_5)$

    $\lambda_1+\lambda_4 \ge 
    \max(\mu_1+\mu_4+\nu_5+\nu_6,
    \mu_2+\mu_4+\nu_4+\nu_6,
    \mu_4+\mu_6+\nu_2+\nu_4,
    \mu_1+\mu_5+\nu_4+\nu_6,
    \mu_4+\mu_6+\nu_1+\nu_5,
    \mu_2+\mu_5+\nu_4+\nu_5,
    \mu_4+\mu_5+\nu_2+\nu_5,
    \mu_3+\mu_6+\nu_3+\nu_4,
    \mu_3+\mu_4+\nu_3+\nu_6,
    \mu_3+\mu_6+\nu_2+\nu_5,
    \mu_2+\mu_5+\nu_3+\nu_6,
    \mu_3+\mu_6+\nu_1+\nu_6,
    \mu_1+\mu_6+\nu_3+\nu_6,
        \mu_3+\mu_5+\nu_2+\nu_6,
    \mu_2+\mu_6+\nu_3+\nu_5,    
    \mu_2+\mu_6+\nu_2+\nu_6,
    \mu_3+\mu_5+\nu_3+\nu_5,
    \mu_5+\mu_6+\nu_1+\nu_4)$

    $\lambda_1+\lambda_3 \ge 
    \max(\mu_1+\mu_3+\nu_5+\nu_6,
    \mu_2+\mu_3+\nu_4+\nu_6,
    \mu_4+\mu_6+\nu_2+\nu_3,
    \mu_1+\mu_4+\nu_4+\nu_6,
    \mu_4+\mu_6+\nu_1+\nu_4,
    \mu_2+\mu_4+\nu_4+\nu_5,
    \mu_4+\mu_5+\nu_2+\nu_4,
    \mu_3+\mu_6+\nu_2+\nu_4,
    \mu_2+\mu_4+\nu_3+\nu_6,
    \mu_3+\mu_6+\nu_1+\nu_5,
    \mu_1+\mu_5+\nu_3+\nu_6,
    \mu_3+\mu_5+\nu_3+\nu_4,
    \mu_3+\mu_4+\nu_3+\nu_5,
    \mu_3+\mu_5+\nu_2+\nu_5,
    \mu_2+\mu_5+\nu_3+\nu_5,    
        \mu_2+\mu_6+\nu_1+\nu_6,
    \mu_1+\mu_6+\nu_2+\nu_6,
    \mu_2+\mu_6+\nu_2+\nu_5,
    \mu_2+\mu_5+\nu_2+\nu_6,
    \mu_5+\mu_6+\nu_1+\nu_3)$

    $\lambda_1+\lambda_2 \ge 
    \max(\mu_1+\mu_2+\nu_5+\nu_6,
    \mu_1+\mu_3+\nu_4+\nu_6,
    \mu_4+\mu_6+\nu_1+\nu_3,
    \mu_2+\mu_3+\nu_4+\nu_5,
    \mu_4+\mu_5+\nu_2+\nu_3,
    \mu_3+\mu_6+\nu_1+\nu_4,
    \mu_1+\mu_4+\nu_3+\nu_6,
    \mu_3+\mu_5+\nu_2+\nu_4,
    \mu_2+\mu_4+\nu_3+\nu_5,
    \mu_3+\mu_4+\nu_3+\nu_4,    
    \mu_2+\mu_6+\nu_1+\nu_5,    
    \mu_1+\mu_5+\nu_2+\nu_6,
    \mu_2+\mu_5+\nu_2+\nu_5,
    \mu_1+\mu_6+\nu_1+\nu_6,
    \mu_5+\mu_6+\nu_1+\nu_2)$
    \item [$s=5$:] $\lambda_1 \ge \max(\mu_6+\nu_1,\mu_1+\nu_6, \mu_2+\nu_5,\mu_5+\nu_2, \mu_3+\nu_4,\mu_4+\nu_3)$
    
    $\lambda_2 \ge \max(\mu_6+\nu_2,\mu_2+\nu_6, \mu_3+\nu_5,\mu_5+\nu_3, \mu_4+\nu_4)$
    
    $\lambda_3 \ge \max(\mu_6+\nu_3,\mu_3+\nu_6, \mu_4+\nu_5,\mu_5+\nu_4)$

    $\lambda_4 \ge \max(\mu_6+\nu_4,\mu_4+\nu_6, \mu_5+\nu_5)$

    $\lambda_5 \ge \max(\mu_6+\nu_5,\mu_5+\nu_6)$

    $\lambda_6 \ge \max(\mu_6+\nu_6)$

\end{enumerate}

\end{document}